\newtheorem{theorem}{Theorem}[section]
\newtheorem{prop}[theorem]{Proposition}
\newtheorem{defn}[theorem]{Definition}
\newtheorem{lemma}[theorem]{Lemma}
\newtheorem{coro}[theorem]{Corollary}
\newtheorem{prop-def}{Proposition-Definition}[section]
\newtheorem{coro-def}{Corollary-Definition}[section]
\newtheorem{remark}[theorem]{Remark}
\newcommand{\nc}{\newcommand}
\nc{\tred}[1]{\textcolor{red}{#1}}
\nc{\tblue}[1]{\textcolor{blue}{#1}}
\nc{\tgreen}[1]{\textcolor{green}{#1}}
\nc{\tpurple}[1]{\textcolor{purple}{#1}}
\nc{\btred}[1]{\textcolor{red}{\bf #1}}
\nc{\btblue}[1]{\textcolor{blue}{\bf #1}}
\nc{\btgreen}[1]{\textcolor{green}{\bf #1}}
\nc{\btpurple}[1]{\textcolor{purple}{\bf #1}}
\renewcommand{\Bbb}{\mathbb}
\renewcommand{\frak}{\mathfrak}
\newcommand{\efootnote}[1]{}
\renewcommand{\textbf}[1]{}
\newcommand{\delete}[1]{}
\nc{\dfootnote}[1]{{}}          
\nc{\ffootnote}[1]{\dfootnote{#1}}
\nc{\mfootnote}[1]{\footnote{#1}} 
\nc{\ofootnote}[1]{\footnote{\tiny Older version: #1}}
\nc{\mlabel}[1]{\label{#1}}  
\nc{\mcite}[1]{\cite{#1}}  
\nc{\mref}[1]{\ref{#1}}  
\nc{\mbibitem}[1]{\bibitem{#1}} 
\nc{\mlabel}[1]{\label{#1}  
{\hfill \hspace{1cm}{\bf{{\ }\hfill(#1)}}}}
\nc{\mcite}[1]{\cite{#1}{{\bf{{\ }(#1)}}}}  
\nc{\mref}[1]{\ref{#1}{{\bf{{\ }(#1)}}}}  
\nc{\mbibitem}[1]{\bibitem[\bf #1]{#1}} 
\nc{\mtail}{\leq_t}
\nc{\mhead}{\leq_h}
\nc{\rk}{\mathrm{rk}}
\nc{\mset}[1]{\tilde{#1}}
\nc{\pa}{\frakL}
\nc{\arr}{\rightarrow}
\nc{\lu}[1]{(#1)}
\nc{\mult}{\mrm{mult}}
\nc{\diff}{\mathrm{Der}}
\nc{\indiff}{\mathrm{InDer}}
\nc{\outdiff}{\mathrm{OutDer}}
\nc{\conmat}{connection matrix\xspace}
\nc{\bounmat}{boundary matrix\xspace}
\nc{\pcyc}{\mathfrak c}
\nc{\calpa}{\calp_A}
\nc{\calpal}{\Gamma_{AL}}
\nc{\calpc}{\calp_L}
\nc{\frakDa}{\frakD_1}
\nc{\frakDal}{\frakD_2}
\nc{\frakDc}{\frakD_L}
\nc{\frakDv}{\frakD_V}
\nc{\frakDp}{\frakD_F}
\nc{\frakBa}{\frakB_1}
\nc{\frakBal}{\frakB_2}
\nc{\frakBc}{\frakB_L}
\nc{\frakBv}{\frakB_V}
\nc{\bin}[2]{ (_{\stackrel{\scs{#1}}{\scs{#2}}})}  
\nc{\binc}[2]{ \left (\!\! \begin{array}{c} \scs{#1}\\
    \scs{#2} \end{array}\!\! \right )}  
\nc{\bincc}[2]{  \left ( {\scs{#1} \atop
    \vspace{-1cm}\scs{#2}} \right )}  
\nc{\bs}{\bar{S}}
\nc{\cosum}{\sqsubset}
\nc{\la}{\longrightarrow}
\nc{\rar}{\rightarrow}
\nc{\dar}{\downarrow}
\nc{\dprod}{**}
\nc{\dap}[1]{\downarrow \rlap{$\scriptstyle{#1}$}}
\nc{\md}{\mathrm{dth}}
\nc{\uap}[1]{\uparrow \rlap{$\scriptstyle{#1}$}}
\nc{\defeq}{\stackrel{\rm def}{=}}
\nc{\disp}[1]{\displaystyle{#1}}
\nc{\dotcup}{\ \displaystyle{\bigcup^\bullet}\ }
\nc{\gzeta}{\bar{\zeta}}
\nc{\hcm}{\ \hat{,}\ }
\nc{\hts}{\hat{\otimes}}
\nc{\barot}{{\otimes}}
\nc{\free}[1]{\bar{#1}}
\nc{\uni}[1]{\tilde{#1}}
\nc{\hcirc}{\hat{\circ}}
\nc{\lleft}{[}
\nc{\lright}{]}
\nc{\lc}{\lfloor}
\nc{\rc}{\rfloor}
\nc{\curlyl}{\left \{ \begin{array}{c} {} \\ {} \end{array}
    \right .  \!\!\!\!\!\!\!}
\nc{\curlyr}{ \!\!\!\!\!\!\!
    \left . \begin{array}{c} {} \\ {} \end{array}
    \right \} }
\nc{\longmid}{\left | \begin{array}{c} {} \\ {} \end{array}
    \right . \!\!\!\!\!\!\!}
\nc{\onetree}{\bullet}
\nc{\ora}[1]{\stackrel{#1}{\rar}}
\nc{\ola}[1]{\stackrel{#1}{\la}}
\nc{\ot}{\otimes}
\nc{\mot}{{{\boxtimes\,}}}
\nc{\otm}{\overline{\boxtimes}}
\nc{\sprod}{\bullet}
\nc{\scs}[1]{\scriptstyle{#1}}
\nc{\mrm}[1]{{\rm #1}}
\nc{\margin}[1]{\marginpar{\rm #1}}   
\nc{\dirlim}{\displaystyle{\lim_{\longrightarrow}}\,}
\nc{\invlim}{\displaystyle{\lim_{\longleftarrow}}\,}
\nc{\mvp}{\vspace{0.3cm}}
\nc{\tk}{^{(k)}}
\nc{\tp}{^\prime}
\nc{\ttp}{^{\prime\prime}}
\nc{\svp}{\vspace{2cm}}
\nc{\vp}{\vspace{8cm}}
\nc{\proofbegin}{\noindent{\bf Proof: }}
\nc{\proofend}{$\blacksquare$ \vspace{0.3cm}}
\nc{\modg}[1]{\!<\!\!{#1}\!\!>}
\nc{\intg}[1]{F_C(#1)}
\nc{\lmodg}{\!<\!\!}
\nc{\rmodg}{\!\!>\!}
\nc{\cpi}{\widehat{\Pi}}
\nc{\sha}{{\mbox{\cyr X}}}  
\nc{\shap}{{\mbox{\cyrs X}}} 
\nc{\shpr}{\diamond}    
\nc{\shp}{\ast}
\nc{\shplus}{\shpr^+}
\nc{\shprc}{\shpr_c}    
\nc{\msh}{\ast}
\nc{\zprod}{m_0}
\nc{\oprod}{m_1}
\nc{\vep}{\varepsilon}
\nc{\labs}{\mid\!}
\nc{\rabs}{\!\mid}
\nc{\mmbox}[1]{\mbox{\ #1\ }}
\nc{\fp}{\mrm{FP}} \nc{\rchar}{\mrm{char}} \nc{\End}{\mrm{End}} \nc{\Fil}{\mrm{Fil}}
\nc{\Mor}{Mor\xspace}
\nc{\gmzvs}{gMZV\xspace}
\nc{\gmzv}{gMZV\xspace}
\nc{\mzv}{MZV\xspace}
\nc{\mzvs}{MZVs\xspace}
\nc{\Hom}{\mrm{Hom}} \nc{\id}{\mrm{id}} \nc{\im}{\mrm{im}}
\nc{\incl}{\mrm{incl}} \nc{\map}{\mrm{Map}} \nc{\mchar}{\rm char}
\nc{\nz}{\rm NZ} \nc{\supp}{\mathrm Supp}
\nc{\Alg}{\mathbf{Alg}}
\nc{\Bax}{\mathbf{Bax}}
\nc{\bff}{\mathbf f}
\nc{\bfk}{{\bf k}}
\nc{\bfone}{{\bf 1}}
\nc{\bfx}{\mathbf x}
\nc{\bfy}{\mathbf y}
\nc{\base}[1]{\bfone^{\otimes ({#1}+1)}} 
\nc{\Cat}{\mathbf{Cat}}
\nc{\detail}{\marginpar{\bf More detail}
    \noindent{\bf Need more detail!}
    \svp}
\nc{\Int}{\mathbf{Int}}
\nc{\Mon}{\mathbf{Mon}}
\nc{\rbtm}{{shuffle }}
\nc{\rbto}{{Rota-Baxter }}
\nc{\remarks}{\noindent{\bf Remarks: }}
\nc{\Rings}{\mathbf{Rings}}
\nc{\Sets}{\mathbf{Sets}}
\nc{\BA}{{\Bbb A}} \nc{\CC}{{\Bbb C}} \nc{\DD}{{\Bbb D}}
\nc{\EE}{{\Bbb E}} \nc{\FF}{{\Bbb F}} \nc{\GG}{{\Bbb G}}
\nc{\HH}{{\Bbb H}} \nc{\LL}{{\Bbb L}} \nc{\NN}{{\Bbb N}}
\nc{\KK}{{\Bbb K}} \nc{\QQ}{{\Bbb Q}} \nc{\RR}{{\Bbb R}}
\nc{\TT}{{\Bbb T}} \nc{\VV}{{\Bbb V}} \nc{\ZZ}{{\Bbb Z}}
\nc{\cala}{{\mathcal A}} \nc{\calc}{{\mathcal C}}
\nc{\cald}{{\mathcal D}} \nc{\cale}{{\mathcal E}}
\nc{\calf}{{\mathcal F}} \nc{\calg}{{\mathcal G}}
\nc{\calh}{{\mathcal H}} \nc{\cali}{{\mathcal I}}
\nc{\call}{{\mathcal L}} \nc{\calm}{{\mathcal M}}
\nc{\caln}{{\mathcal N}} \nc{\calo}{{\mathcal O}}
\nc{\calp}{{\mathcal P}} \nc{\calr}{{\mathcal R}}
\nc{\cals}{{\mathcal S}}
\nc{\calt}{{\mathcal T}} \nc{\calw}{{\mathcal W}}
\nc{\calk}{{\mathcal K}} \nc{\calx}{{\mathcal X}}
\nc{\CA}{\mathcal{A}}
\nc{\fraka}{{\mathfrak a}}
\nc{\frakA}{{\mathfrak A}}
\nc{\frakb}{{\mathfrak b}}
\nc{\frakB}{{\mathfrak B}}
\nc{\frakC}{{\mathfrak C}}
\nc{\frakD}{{\mathfrak D}}
\nc{\frakg}{{\mathfrak g}}
\nc{\frakH}{{\mathfrak H}}
\nc{\frakL}{{\mathfrak L}}
\nc{\frakM}{{\mathfrak M}}
\nc{\bfrakM}{\overline{\frakM}}
\nc{\frakm}{{\mathfrak m}}
\nc{\frakP}{{\mathfrak P}}
\nc{\frakN}{{\mathfrak N}}
\nc{\frakp}{{\mathfrak p}}
\nc{\frakR}{{\mathfrak R}}
\nc{\frakS}{{\mathfrak S}}
\font\cyr=wncyr10
\font\cyrs=wncyr7
\nc{\li}[1]{\textcolor{red}{Li:#1}}
\nc{\fang}[1]{\textcolor{blue}{Fang: #1}}
\begin{document}

\title[Hochschild cohomology on a path algebra and Euler's formula]
{Structure of Hochschild cohomology of path
algebras and differential formulation of Euler's polyhedron
formula}
%
%
\author{Li Guo}
\address{
Department of Mathematics and Computer Science,
         Rutgers University,
         Newark, NJ 07102}
\email{liguo@rutgers.edu}
\author{Fang Li}
\address{The corresponding author; Department of Mathematics, Zhejiang University, Hangzhou 310027, China}
\email{fangli@zju.edu.cn}

\subjclass[2000]{
16E40, 
16G20, 
05E15, 
05C25, 
12H05, 
16W25, 
16S32 
}

\keywords{quiver, path algebra, Hochschild cohomology, Lie algebra, differential algebra, graph, Euler's polyhedron formula, connection matrix}



\begin{abstract}
This article studies the Lie algebra $\diff(\bfk\Gamma)$ of derivations on the path algebra $\bfk\Gamma$ of a quiver $\Gamma$ and the Lie algebra on the first Hochschild cohomology group $HH^1(\bfk\Gamma)$. We relate these Lie algebras to the algebraic and combinatorial properties of the path algebra. Characterizations of derivations on a path algebra are obtained, leading to a canonical basis of $\diff(\bfk\Gamma)$ and its Lie algebra properties. Special derivations are associated to the vertices, arrows and faces of a quiver, and the concepts of a \conmat and \bounmat are introduced to study the relations among these derivations, concluding that the space of edge derivations is the direct sum of the spaces of the vertex derivations and the face derivations, while the dimensions of the latter spaces are the largest possible. By taking dimensions, this relation among spaces of derivations recovers Euler's polyhedron formula.
This relation also leads a combinatorial construction of a canonical basis of the Lie algebra $HH^1(\bfk\Gamma)$, together with a semidirect sum decomposition of $HH^1(\bfk\Gamma)$.
\end{abstract}

\maketitle

\tableofcontents

\setcounter{section}{0}


\section{Introduction}
\mlabel{sec:intr}

This paper studies the structure of the Lie algebra of derivations on the path algebra $\bfk\Gamma$ of a quiver $\Gamma$ and the Lie algebra of outer derivations on the path algebra, also known as the first Hochschild cohomology group $HH^1(\bfk\Gamma)$. This study has two motivations, one from Hochschild cohomology and one from differential algebra. 
We determine a canonical basis and their multiplication constants for these two Lie algebras, and relate it to the combinatorial properties of the quiver, such as Euler's Polyhedron Theorem.

The study of Hochschild cohomology of quiver related algebras started with the dimension formula for $HH^n(\bfk\Gamma)$ given by Happel in 1989~\cite{Ha}, who showed that for an acyclic quiver $\Gamma$ and a field $\bf k$,
\begin{equation}
HH^0({\bf k}\Gamma)=\bfk,\quad \dim_{\bf k}HH^1({\bf k}\Gamma)=1-\mid
V\mid+\sum_{\alpha\in E}v(\alpha),\quad HH^i({\bf k}\Gamma)=0,~
\forall i\geq 2
\notag
\end{equation}
where
$v(\alpha)=\dim_{\bf k}t(\alpha){\bf k}\Gamma h(\alpha)$, $V$ and $E$
are respectively the sets of vertices and arrows of $\Gamma$.
Afterwards, there have been extensive studies on the dimensions of the Hochschild cohomology groups of quiver related algebras, such as  truncated path algebras, monomial algebras, schurian algebras and 2-nilpotent algebras~\mcite{ACT,C1,Ha,Lo,Pe,Sa,St,Xu}.

Further understanding of the Lie algebra $HH^1(\bfk\Gamma)$ would benefit from an explicit structure of this Lie algebra, such as a canonical basis and the corresponding multiplication constants. This is what we would like to achieve in this
paper. We find that the choice of the basis of
$HH^1({\bf k}\Gamma)$ is related to the combinatorics, such as Euler's formula, and the topology, such as the genus, of the quiver.

Our second motivation is differential algebra which has its origin in the algebraic study of differential equations~\mcite{Ko,Ri,SP} and is a natural yet profound extension of commutative algebra and the related algebraic geometry. After many years of developments, the theory has expanded into a vast area in mathematics~\mcite{CGKS,Ko,SP}. Furthermore, differential algebra has found important applications in arithmetic geometry, logic and computational algebra, especially in the profound work of W. Wu on mechanical proof of geometric theorems~\mcite{Wu,Wu2}.

Most of the study on differential algebra has been for commutative algebras and fields. Recently, there have been interests to study differential algebra for noncommutative algebras. For instance, in connection with combinatorics, differential structures were found on heap ordered trees~\mcite{GL} and on decorated rooted trees~\mcite{GK3}.


This paper gives a differential study of the path algebra of a
quiver, as a first step in the study of differential structures on
Artinian algebras. According to the well-known Gabriel
Theorem~\mcite{ASS,ARS}, a basic algebra over an algebraically
closed field is a quotient of the path algebra of its Ext-quiver
modulo an admissible ideal. More generally, by~\mcite{LL}, an
Artinian algebra over a perfect field is isomorphic to a
quotient of the generalized path algebra of its natural quiver. Thus if we can determine the differential structures on
path algebras (resp. generalized path algebras), including their
differential ideals, then by taking the quotients of these algebras
modulo their differential ideals, we will be able to obtain the
differential structure on a basic algebra (resp. an Artinian
algebra). For more related references, see \cite{Li,Li2,LC}.

In Section~\mref{sec:prop} we characterize when a linear operator on a path algebra is a derivation. These characterizations of derivations allow us to obtain in Section~\mref{sec:str} a canonical basis of the Lie algebra of derivations on a path algebra and obtain a structure theorem of $\diff(\bfk\Gamma)$. This structure theorem is then applied to study Lie algebra properties of $\diff(\bfk\Gamma)$. In Section~\mref{sec:comb}, we focus on three types of derivations of combinatorial nature, namely derivations from the vertices, arrows and faces of the quiver respectively. Dimension formulas of the spaces spanned by these derivations are proved and the relations among them are determined. In Section~\mref{sec:app} we give two applications of these dimension formulas. We first revisit Euler's Polyhedron Theorem from a differential viewpoint, and prove that the linear space of edge derivations is the direct sum of the linear spaces of vertex derivations and face derivations. Taking dimensions of the spaces in this direct sum decomposition gives the original formula of Euler. We next apply the combinatorial derivations to obtain a canonical basis for $HH^1(\bfk\Gamma)$. This basis allows us to do computations in this Lie algebra and factor it as a semidirect sum of an abelian Lie subalgebra and an Lie ideal.

\delete{
In our study of outer differential operators \fang{ and the first
Hochschild cohomology}, it is natural to view a planar quiver as
embedded into the Riemann sphere rather than into the plane. In this
way, algebraic invariants of the space of outer differential
operators of a quiver (on the Riemann sphere), such as its rank and
canonical basis, is naturally related to the topological and graphic
invariants, such as the genus and Euler's characteristic of the
Riemann sphere. We hope to establish such a relationship for the
more general quivers that can be embedded into a Riemann surface of
higher genus.
}

\section{Derivations on path algebras}\mlabel{sec:prop}
The main purpose of this section is to provide necessary and sufficient conditions for  a linear operator on a path algebra to be a differential operator.

\subsection{Derivations}
\mlabel{ss:diffop}
We briefly recall concepts, notations and facts on differential algebras, Lie algebras and path algebras of quivers. Further details on these three subjects can be found in~\mcite{CGKS,GK3,Ko}, in~\mcite{Hu} and in~\mcite{CB,ASS,ARS,Li}, respectively.

Let $\bfk$ be a field and let $A$ be a $\bfk$-algebra.
Let $Lie(A)=(A,[,])$ denote the Lie algebra structure on $A$ with the Lie bracket
\begin{equation}
[x,y]:=xy-yx, \quad x,y \in A.
\notag
\end{equation}
A {\bf derivation} (or a {\bf differential operator}) on $A$ is a $\bfk$-linear map $D:A\to A$ such that
\begin{equation}
D(xy)=D(x)y+xD(y), \quad \forall x,y\in A.
\notag 
\end{equation}
Let $\diff(A)$ denote the set of derivations on $A$.
Then with the Lie bracket
\begin{equation}
[D_1,D_2]:=D_1\circ D_2-D_2\circ D_1, \quad D_1,D_2\in\diff(A),
\notag 
\end{equation}
$\diff(A)$ is a Lie algebra, called the {\bf Lie algebra of derivations} on $A$.

For $a\in A$, define the {\bf inner derivation}
\begin{equation}
D_a: A \to A, \quad D_a(b)=(ad_a)(b):=ab-ba, \quad b\in A.
\mlabel{eq:ind}
\end{equation}
Then the map
\begin{equation}
\frakD: Lie(A) \to \diff(A), \quad \frakD(a)=D_a, \quad a\in A,
\mlabel{eq:indh}
\end{equation}
from Eq.~(\mref{eq:ind}) is a Lie algebra homomorphism whose kernel
is $C(A)$, the center of $A$, and also the zeroth Hochschild cohomology group $HH^0(A)$.

The subset $\; \indiff(A):=\im \frakD \subseteq \diff(A)\;$ is a Lie ideal.
The quotient Lie algebra
\begin{equation}
\outdiff(A):=\diff(A)/\indiff(A) \mlabel{eq:outd}
\end{equation}
is called the {\bf Lie algebra of outer
derivations}. As is well-known~\cite[\S 11.5]{Pi},
$\outdiff(A)$ is also the {\bf first Hochschild cohomology group} $HH^1(A)$.

We will study $\diff(A)$ and $HH^1(A)$ when $A=\bfk \Gamma$ is the
path algebra of a connected quiver $\Gamma$.

\subsection{Path algebras}

A {\bf quiver} is a quadruple $\Gamma=(V,E,t,h)$ consisting of a set $V$ of {\bf vertices}, a set $E$ of {\bf arrows} and a pair of maps $h,t: E\to V$. When there is no danger of confusion, we also denote $\Gamma=(V,E)$.
A quiver is called {\bf trivial} if $E=\emptyset$.
Let $\calp$ denote the set of paths of $\Gamma$. For $p\in \calp$ let $t(p)$ and $h(p)$ denote the {\bf tail} and {\bf head} of $p$.

Let
\begin{equation}
\bfk\Gamma = \bigoplus_{p\in \calp} \bfk p,
\notag 
\end{equation}
denote the {\bf path algebra} of $\Gamma$ where the product is given by
\begin{equation}
p \cdot q:= \delta_{h(p),t(q)}pq:=\left\{\begin{array}{ll} pq, & h(p)=t(q), \\ 0, & \text{otherwise}. \end{array} \right .
\notag 
\end{equation}
Here $\delta_{h(p),t(q)}$ is the delta function.
To simplify notations, we often suppress the symbol $\cdot$ and denote $p \cdot q = pq$, with the convention that $pq=0$ when $h(p)\neq t(q)$.

We will use the following notations on quivers and their path algebras.
\begin{defn}
{\rm
\begin{enumerate}
\item
Let $p$ be a path in $\Gamma$ consisting of the ordered list $v_0, p_1, v_1,
\cdots, v_{\ell-1}, p_\ell, v_\ell,$ with $v_i\in V, 0\leq i\leq
\ell$ and $p_j\in E, 1\leq j\leq \ell$, such that $t(p_j)=v_{j-1}$
and $h(p_j)=v_j$, $1\leq j\leq \ell$.
The integer $\ell\geq 0$ is called the {\bf length} of the path $p$ and is denoted by $\ell(p)$.
\item
The expression
\begin{equation}
p=v_0 p_1 v_1 \cdots v_{\ell-1} p_\ell v_\ell,
\notag 
\end{equation}
is called the {\bf standard decomposition of $p$}, and the expression
\begin{equation}
p=p_1\cdots p_\ell,
\notag 
\end{equation}
is called the {\bf decomposition of $p$ into arrows}.
Both decompositions are unique.
\item
For two paths $p$ and $q$, denote $p\parallel q$ and called $p$ and $q$ {\bf parallel}, if $t(p)=t(q)$ and $h(p)=h(q)$.
\item
For two paths $p$ and $q$, if $q=pr$ (resp. $q=rp$) for some path $r$, then call $p$ a {\bf tail} (resp. {\bf head}) of $q$ and denote by $p\mtail q$ (resp. $p\mhead q$). Such an $r$ is unique for given $p,q$.
\item
A path $p$ is called {\bf acyclic} if $h(p)\neq t(p)$.
The set of acyclic paths is denoted by $\calpa$.
\item
A quiver $\Gamma$ is called {\bf acyclic} if $\Gamma$ has no oriented cycles, that is, $\calp\backslash V =\calpa$.
\end{enumerate}
}
\end{defn}

In this paper, we always assume that a quiver
$\Gamma$ is finite, that is, its vertex set and arrow set are both finite.

\begin{lemma}
Let $A$ be a $\bfk$-algebra with a linear basis $X$. Then a linear operator $D:A \to A$ is a derivation if and only if
\begin{equation}
 D(xy) = D(x)y + x D(y), \quad \forall x, y\in X.
 \mlabel{eq:base}
\end{equation}
In particular, a linear operator $D:\bfk\Gamma \to \bfk\Gamma$ is a derivation if and only if Eq.~(\mref{eq:base}) holds for all $x,y\in \calp$.
\mlabel{lem:base}
\end{lemma}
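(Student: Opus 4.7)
The plan is to prove the non-trivial direction by bilinear extension, since the forward direction is immediate from the definition of a derivation.

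For the converse, assume $D(xy) = D(x)y + xD(y)$ for all $x,y \in X$. Given arbitrary $a,b \in A$, I would expand them in the basis as $a = \sum_i \alpha_i x_i$ and $b = \sum_j \beta_j y_j$ with $\alpha_i,\beta_j \in \bfk$ and $x_i,y_j \in X$ (finite sums). The key calculation is then
\begin{align*}
D(ab) &= D\Bigl(\sum_{i,j} \alpha_i \beta_j\, x_i y_j\Bigr) = \sum_{i,j} \alpha_i \beta_j\, D(x_i y_j) \\
&= \sum_{i,j} \alpha_i \beta_j\bigl(D(x_i) y_j + x_i D(y_j)\bigr) \\
&= \Bigl(\sum_i \alpha_i D(x_i)\Bigr)\Bigl(\sum_j \beta_j y_j\Bigr) + \Bigl(\sum_i \alpha_i x_i\Bigr)\Bigl(\sum_j \beta_j D(y_j)\Bigr) \\
&= D(a)\,b + a\,D(b),
\end{align*}
using $\bfk$-linearity of $D$ in the second and fourth equalities, the hypothesis in the third equality, and $\bfk$-bilinearity of the product of $A$ to split the sums. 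This shows $D$ satisfies the Leibniz rule on all of $A \times A$, hence is a derivation.

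The particular case of $\bfk\Gamma$ is then just an instance of the general statement, since by construction $\calp$ is a $\bfk$-linear basis of $\bfk\Gamma$. There is no real obstacle here; the only thing to be careful about is making sure the expansion argument uses $\bfk$-bilinearity of multiplication (so that scalars can be pulled out from both sides) and that sums are finite so rearrangement is justified. This lemma will be used repeatedly in later sections to verify that linear maps defined by their values on paths extend to derivations, so the statement is formulated exactly in the form needed for those applications.
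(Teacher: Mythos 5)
Your proposal is correct and follows essentially the same argument as the paper: the forward direction is immediate, and the converse is obtained by expanding both factors in the basis $X$ and using linearity of $D$ together with bilinearity of the product to reassemble the sums into $D(a)b + aD(b)$. No issues.
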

\begin{proof}
The ``only if" part is clear. Conversely, suppose the condition
holds. Let $u,v$ be in $A$. Then $ u = \sum_{x\in X} c_x x,\; v=
\sum_{y\in X} d_y y.\;$ Thus, we have $$D(uv)=  \sum_{x,y\in X} c_x
d_y D(xy)= \sum_{x,y\in X} c_x d_y (D(x)y+xD(y)) = D(u)v + uD(v).$$
This is what we need.
\end{proof}

\subsection{Necessary and sufficient conditions for a derivation}
We now characterize a derivation on a path algebra $\bfk \Gamma$ in terms of the paths $\calp$ of $\Gamma$. These characterizations will be applied in the next section to determine all derivations on a path algebra.

Let $D:\bfk\Gamma \to \bfk\Gamma$ be a linear operator. Then for any $p\in \calp$,
\begin{equation}
D(p)= \sum_{q\in \calp} c^p_q q,
\notag 
\end{equation}
for unique $c^p_q\in \bfk$. We will use this notation for the rest of this paper. We also use the convention that, for the empty set $\emptyset$,
\begin{equation}
\sum\limits_{q\in \emptyset} c^p_q q= 0.
\notag 
\end{equation}

\begin{theorem}
Let $\Gamma$ be a quiver.
A linear operator $D: \bfk\Gamma \to \bfk\Gamma$ is a derivation if and only if $D$ satisfies the following conditions.
\begin{enumerate}
\item
For $v\in V$,
\begin{equation}
D(v)=
 \sum\limits_{q\in \calpa, t(q)=v} c^v_q q
+\sum\limits_{q\in \calpa, h(q)=v} c^v_q q
= \sum\limits_{q\in \calpa, t(q)=v \text{ or } h(q)= v} c^v_q q.
\mlabel{eq:vdiff}
\end{equation}
\item
For $p\in \calp\backslash V$,
\begin{equation}
D(p)=\sum\limits_{q\in \calpa,\; h(q)=t(p)} c^{t(p)}_q qp +
\sum\limits_{q\parallel p} c^p_q q +
\sum\limits_{q\in \calpa,\; t(q)=h(p)} c^{h(p)}_q pq,
\mlabel{eq:pdiff}
\end{equation}
where the coefficients $c^p_q$ are subject to the following conditions.
\begin{enumerate}
\item
For any path $q\in \calpa$,
\begin{equation}
c^{h(q)}_q + c^{t(q)}_q = 0.
\mlabel{eq:pathrel}
\end{equation}
\item
For any path $p=p_1p_2$ with $p_1,p_2\in \calp\backslash V$ and
$q\parallel p$, we have
\begin{equation}
c^p_q=c^{p_1p_2}_q = \left \{ \begin{array}{ll}
 c^{p_1}_{q_1}+c^{p_2}_{q_2}, & \text{if\;\;\;} p_2\mhead q \text{ with } q=q_1p_2 \text{ and } p_1\mtail q \text{ with } q=p_1q_2, \\
 c^{p_1}_{q_1}, & \text{if\;\;\;} p_2\mhead q \text{ with } q=q_1p_2 \text{ and } q_1\not\mtail p_1, \\
 c^{p_2}_{q_2}, & \text{if\;\;\;} p_1\mtail q \text{ with } q=p_1q_2 \text{ and } q_2\not\mhead p_2,\\
 0, & \text{if\;\;\;} p_2\not\mhead q \text{ and } p_1\not\mtail q.
 \end{array} \right.
 \mlabel{eq:prodrel}
 \end{equation}
\end{enumerate}
\end{enumerate}
\mlabel{thm:dcond}
\end{theorem}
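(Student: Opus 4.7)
The plan is to exploit the orthogonal idempotent structure of the vertex set --- $v \cdot w = \delta_{v,w} v$, $v \cdot p = \delta_{v, t(p)} p$, $p \cdot v = \delta_{v, h(p)} p$ --- together with Lemma~\mref{lem:base}, which reduces the verification of the Leibniz rule to basis pairs of paths. Left- or right-multiplication by a vertex $v$ acts as a slicing projection on an element of $\bfk\Gamma$, picking out the subsum indexed by paths that begin or end at $v$; this ``vertex slicing'' will be used repeatedly to isolate the coefficients $c^p_q$.

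For the ``only if'' direction, I first apply $D$ to $v\cdot v = v$: expanding $D(v) = D(v) v + v D(v)$ in the path basis and matching coefficients forces $c^v_q = 0$ unless $t(q) = v$ or $h(q) = v$, and further forces $c^v_q = 0$ whenever $t(q) = h(q) = v$ (in particular $c^v_v = 0$), which is (\mref{eq:vdiff}). Applying $D$ to $v\cdot w = 0$ for $v \neq w$ then yields, from the coefficient of any acyclic $q$ with $t(q) = v$, $h(q) = w$, the antisymmetry $c^v_q + c^w_q = 0$ of (\mref{eq:pathrel}). For a general path $p$, the identity $p = t(p)\cdot p$ lets me isolate the part of $D(p)$ supported on paths not starting at $t(p)$: it equals $D(t(p))\, p$, which by (\mref{eq:vdiff}) is exactly $\sum_{q \in \calpa,\, h(q) = t(p)} c^{t(p)}_q qp$. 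Symmetrically, $p = p\cdot h(p)$ handles the part not ending at $h(p)$, and what remains is supported on $q \parallel p$. This establishes (\mref{eq:pdiff}). Finally (\mref{eq:prodrel}) is obtained by expanding $D(p_1 p_2) = D(p_1) p_2 + p_1 D(p_2)$ via (\mref{eq:pdiff}) and noting two features: the ``cross'' terms $c^{h(p_1)}_r p_1 r p_2$ from $D(p_1)p_2$ and $c^{t(p_2)}_r p_1 r p_2$ from $p_1 D(p_2)$ vanish identically, because $h(p_1) = t(p_2)$ together with $r \in \calpa$ would force $r$ to be a loop; and the coefficient of a given $q \parallel p_1 p_2$ is read off from the two surviving contributions $\sum_s c^{p_1}_s s p_2$ and $\sum_s c^{p_2}_s p_1 s$, splitting into the four cases of (\mref{eq:prodrel}) according to whether $p_2 \mhead q$ (giving $q = q_1 p_2$), $p_1 \mtail q$ (giving $q = p_1 q_2$), both, or neither.

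For the converse, Lemma~\mref{lem:base} reduces the task to verifying $D(xy) = D(x) y + x D(y)$ for all $x, y \in \calp$. I split by type: both vertices is immediate from (\mref{eq:vdiff}) and (\mref{eq:pathrel}); a vertex $v$ and a longer path $p$ amounts to checking that $D(v) p + v D(p)$ reassembles into $D(p)$ when $v = t(p)$ and cancels to zero via (\mref{eq:pathrel}) when $v \neq t(p)$, with the symmetric case for $p \cdot v$ analogous. The substantial case is two longer paths $p_1, p_2$, which splits further by whether $h(p_1) = t(p_2)$. If they agree, the ``only if'' computation runs in reverse and (\mref{eq:prodrel}) is precisely what makes the parallel coefficients match; if they disagree, $p_1 p_2 = 0$ and $D(p_1) p_2 + p_1 D(p_2)$ collapses to $\sum_{r \in \calpa,\, t(r) = h(p_1),\, h(r) = t(p_2)} (c^{h(p_1)}_r + c^{t(p_2)}_r) p_1 r p_2$, which vanishes by (\mref{eq:pathrel}). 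The main obstacle is the combinatorial bookkeeping in this two-path case: one must keep straight which of the three summands of (\mref{eq:pdiff}) survive when $D(p_1)$ is multiplied by $p_2$ on the right (respectively $p_1$ by $D(p_2)$ on the left), and two qualitatively different cancellation mechanisms arise --- the structural one (an acyclic $r$ cannot form a loop at the joining vertex) when $h(p_1) = t(p_2)$, and the arithmetic one (antisymmetry (\mref{eq:pathrel})) when $h(p_1) \neq t(p_2)$. Getting this case split right is what keeps the calculation transparent.
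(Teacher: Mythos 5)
Your proposal is correct and follows essentially the same route as the paper's proof: the same idempotent identities ($vv=v$, $vw=0$ for $v\neq w$, $p=t(p)\,p\,h(p)$, $p=p_1p_2$) drive the forward direction, and the same four-way case split on basis pairs — with the same two cancellation mechanisms (acyclicity of $q$ at the joining vertex versus the antisymmetry $c^{h(q)}_q+c^{t(q)}_q=0$) — drives the converse. The only cosmetic difference is that you obtain the vanishing of $c^v_q$ for cycles $q$ at $v$ directly from $c^v_q=2c^v_q$ in $D(v)=D(v)v+vD(v)$, where the paper invokes the additional identity $v=v^3$; both arguments are valid over any field.
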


\begin{proof}
($\Longrightarrow$) Let $D:\bfk\Gamma\to \bfk\Gamma$ be a linear operator. For a given $v\in V$, since $vv =v$, we have $
D(v)=D(vv)=D(v)v+vD(v).$ Thus
\begin{equation}
 \sum_{q\in \calp} c^v_q q = \Big(\sum_{q\in \calp}c^v_q q\Big)v + v \Big(\sum_{q\in \calp} c^v_q q\Big)
= \sum_{q\in \calp, h(q)=v}c^v_q q + \sum_{q\in \calp, t(q)=v}
c^v_q\, q \mlabel{eq:idem1}
\end{equation}
since $qv=0$ unless $h(q)=v$ and $vq=0$ unless $t(q)=v$.

Similarly, from
$$D(v)=D(v^3) = D(v)v^2+vD(v)v + v^2D(v)=D(v)v+vD(v)v+vD(v),$$
we have
\begin{eqnarray}
 \sum_{q\in \calp} c^v_q q &=& \Big(\sum_{q\in \calp}c^v_q q\Big)v + v\Big(\sum_{q\in \calp}c^v_q q\Big)v+ v \Big(\sum_{q\in \calp} c^v_q q\Big)
 \notag\\
&=& \sum_{q\in \calp, h(q)=v}c^v_q q + \sum_{q\in \calp,
h(q)=v,t(q)=v}c^v_q q + \sum_{q\in \calp, t(q)=v} c^v_q\, q.
\notag 
\end{eqnarray}
Comparing this with Eq.~(\mref{eq:idem1}), we obtain $
\sum\limits_{q\in \calp, h(q)=v,t(q)=v}c^v_q q=0.$ Then
Eq.~(\mref{eq:vdiff}) follows from Eq.~(\mref{eq:idem1}).

\medskip

Also, for a given path $p\in \calp\backslash V$, we have
\begin{eqnarray*}
D(p)&=&D(t(p)ph(p))\\
&=& D(t(p))ph(p)+t(p)D(p)h(p)+t(p)p D(h(p))\\
&=& D(t(p))p+t(p)D(p)h(p)+p D(h(p))\\
&=& D(t(p))p+\sum_{q\parallel p} c^p_q\,q+p D(h(p)).
\end{eqnarray*}
By Eq.~(\mref{eq:vdiff}), we have
$$
D(t(p))p= \Big(\sum\limits_{q\in \calpa, t(q)=t(p)} c^{t(p)}_q q
+\sum\limits_{q\in \calpa, h(q)=t(p)} c^{t(p)}_q q\Big)p =
\sum\limits_{q\in \calpa, h(q)=t(p)} c^{t(p)}_q qp$$ since $qp=0$ if
$h(q)\neq t(p)$. Similarly, $pD(h(p))= \sum\limits_{q\in \calpa,
t(q)=h(p)} c^{h(p)}_q pq.$ This proves Eq.~(\mref{eq:pdiff}).
\smallskip

Thus we only need to prove Eq.~(\mref{eq:pathrel}) and Eq.~(\mref{eq:prodrel}) in order to complete the proof of ($\Longrightarrow$). For this purpose, we prove a lemma.

\begin{lemma}
\begin{enumerate}
\item
Let $p_1,p_2\in \calp$. Suppose $p_1p_2=0$ and Eq.~(\mref{eq:vdiff}) and Eq.~(\mref{eq:prodrel}) hold for $p_1$ and $p_2$. Then $D(p_1p_2)=D(p_1)p_2+p_1D(p_2)$ if and only if, for every $q\in \calp$ with $t(q)=p_1$ and $h(q)=p_2$, Eq.~(\mref{eq:pathrel}) holds.
\mlabel{it:leib0}
\item
Let $p_1, p_2\in \calp\backslash V$. Suppose
$p_1p_2\neq 0$ and Eq.~(\mref{eq:pdiff}) holds for $p_1, p_2$ and $p_1p_2$. Then $D(p_1p_2)=D(p_1)p_2+p_1D(p_2)$ if and only
if, for every $q\parallel p$, it
holds that Eq.~(\mref{eq:prodrel}). \mlabel{it:leib1}
\end{enumerate}
\mlabel{lem:dzero}
\end{lemma}

\begin{proof}
(\mref{it:leib0}). Since $p_1p_2=0$ and $D$ is linear, we have $D(p_1p_2)=0$. To compute $D(p_1)p_2+p_1D(p_2)$, first consider the case when $p_1, p_2$ are in $V$.
{\allowdisplaybreaks
\begin{eqnarray*}
&& D(p_1)p_2 + p_1D(p_2)\\
&=& \left(\sum_{q\in \calpa, h(q)=p_1}c_q^{p_1} q +\sum_{q\in \calpa, t(q)=p_1} c^{p_1}_q\,q \right)p_2
\\
&&+ p_1\left( \sum_{q\in\calpa, h(q)=p_2}c^{p_2}_q\,q +\sum_{q\in\calpa, t(q)=p_2}c^{p_2}_q\,q\right)\qquad \text{(by Eq.~(\mref{eq:pdiff}))}\\
&=&
\sum_{q\in\calpa,t(q)=p_1,h(q)=p_2} c^{p_1}_q q
+\sum_{q\in\calpa, t(q)=p_1,h(q)=p_2} c^{p_2}_q q \qquad \text{(since }p_1\neq p_2 \text{)}\\
&=& \sum_{q\in\calpa,t(q)=p_1,h(q)=p_2} (c^{p_1}_q+c^{p_2}_q ) q.
\end{eqnarray*}
}
Thus $D(p_1)p_2
+ p_1D(p_2)=0$ if and only if all the coefficients in the last sum
are zero. That is, $c^{h(q)}_q+c^{t(q)}_q=0$ for all $q\in
\calp$ with  $t(q)=p_1,h(q)=p_2$. This proves
(\mref{it:leib0}).

Next consider the case when $p_1$ and $p_2$ are in $\calp\backslash V$. Then we have
{\allowdisplaybreaks
\begin{eqnarray*}
&& D(p_1)p_2 + p_1D(p_2)\\
&=& \left(\sum_{q\in \calpa, h(q)=t(p_1)}c_q^{t(p_1)} qp_1
+\sum_{q\parallel p_1} c^{p_1}_q\,q +\sum_{q\in\calpa,t(q)=h(t_1)}c^{h(p_1)}_q\,p_1q \right) p_2 \\
&& + p_1\left(\sum_{q\in \calpa, h(q)=t(p_2)}c_q^{t(p_2)} qp_2
+\sum_{q\parallel p_2} c^{p_2}_q\,q +\sum_{q\in\calpa,t(q)=h(t_2)}c^{h(p_2)}_q\,p_2q \right) \qquad \text{(by Eq.~(\mref{eq:pdiff}))}\\
&=&
\sum_{t(q)=h(p_1),h(q)= t(p_2)} c^{h(p_1)}_q p_1q p_2
+ \sum_{h(q)=t(p_2),t(q)= h(p_1)} c^{t(p_2)}_qp_1q p_2 \qquad \text{(since } h(p_1)\neq t(p_2) \text{)}\\
&=& \sum_{t(q)=h(p_1),h(q)=t(p_2)} (c^{h(p_1)}_q+c^{t(p_2)}_q )
p_1qp_2.
\end{eqnarray*}
}
For any two distinct $q\in \calp$ with $t(q)=h(p_1)$ and
$h(q)=t(p_2)$, the corresponding $p_1qp_2$ are non-zero and are
distinct paths in the basis $\calp$ of $\bfk\Gamma$. Thus $D(p_1)p_2
+ p_1D(p_2)=0$ if and only if all the coefficients in the last sum
are zero. That is, $c^{h(p_1)}_q+c^{t(p_2)}_q=0$ for all $q\in
\calp$ with  $t(q)=h(p_1),h(q)=t(p_2)$. This proves (\mref{it:leib0}) in this case.

The cases when one of $p_1,p_2$ is in $V$ and the other one is in $\calp\backslash V$ can be verified in the same way.

\medskip

\noindent
(\mref{it:leib1}).
By Eq.~(\mref{eq:pdiff}) we have
\begin{equation}
D(p_1p_2)= \sum_{q\in\calpa,h(q)=t(p_1)}c^{t(p_1)}_q\,qp_1p_2 + \sum_{q\parallel p_1p_2} c^{p_1p_2}_q q + \sum_{q\in\cala,t(q)=h(p_2)}c^{h(p_2)}_p\,p_1p_2q
\mlabel{eq:prodrel1}
\end{equation}
since $t(p_1p_2)=t(p_1)$ and $h(p_1p_2)=h(p_2)$.
Similarly,
\begin{eqnarray*}
D(p_1)p_2 &=& \big( \sum_{q\in\calpa,h(q)=t(p_1)}c^{t(p_1)}_q\,qp_1 + \sum_{q_1\parallel p_1} c^{p_1}_{q_1} q_1 + \sum_{q\in\calpa,t(q)=h(p_1)}c^{h(p_1)}_q\,p_1q \big) p_2.
\end{eqnarray*}
Since $h(q)\neq t(q)=h(p_1)=t(p_2)$, we have $qp_2=0$ for $q$ in the last sum. Thus we obtain \begin{equation}
D(p_1)p_2 = \sum_{q\in\calpa,h(q)=t(p_1)}c^{t(p_1)}_q\,qp_1p_2 + \sum_{q_1\parallel p_1} c^{p_1}_{q_1} q_1p_2.
\mlabel{eq:prodrel2}
\end{equation}
By the same argument, we have
\begin{equation}
p_1D(p_2)= \sum_{q_2\parallel p_2} c^{p_2}_{q_2} p_1 q_2 + \sum_{q\in\calpa,t(q)=h(p_2)}c^{h(p_2)}_q\,p_1p_2q.
\mlabel{eq:prodrel3}
\end{equation}
Thus by equations (\mref{eq:prodrel1}), (\mref{eq:prodrel2}) and (\mref{eq:prodrel3}), we see that $D(p_1p_2)=D(p_1)p_2 + p_1 D(p_2)$ if and only if
\begin{equation}
\sum_{q\parallel p_1p_2} c^{p_1p_2}_q q =
\sum_{q_1\parallel p_1} c^{p_1}_{q_1} q_1
p_2 + \sum_{q_2\parallel p_2}
c^{p_2}_{q_2} p_1 q_2. \mlabel{eq:prodrel4}
\end{equation}

In the sum on the left hand side, the paths
$q\parallel p_1p_2$ can be divided into the disjoint union of the following four subsets: \begin{eqnarray}
\calp_1&:=&\{q\in \calp\ |\ p\parallel p_1p_2, p_1\mtail q \text{ and } p_2\mhead q\},\\
\calp_2&:=&\{q\in \calp\ |\ q\parallel p_1p_2, p_1\mtail q \text{ and } p_2\not\mhead q\},\\
\calp_3&:=&\{q\in \calp\ |\ q\parallel p_1p_2, p_1\not\mtail q \text{ and } p_2\mhead q\},\\
\calp_4&:=&\{q\in \calp\ |\ q\parallel p_1p_2, p_1\not\mtail q \text{ and } p_2\not\mhead q\}.
\end{eqnarray}
Thus the left hand side of Eq.~(\mref{eq:prodrel4}) becomes
\begin{equation}
\sum_{q\parallel p_1p_2} c^{p_1p_2}_q q =
\sum_{q\in \calp_1} c^{p_1p_2}_q q +
\sum_{q\in \calp_2} c^{p_1p_2}_q q +
\sum_{q\in \calp_3} c^{p_1p_2}_q q +
\sum_{q\in \calp_4} c^{p_1p_2}_q q.
\mlabel{eq:lhs}
\end{equation}
By the definitions of $\mtail$ and $\mhead$, for the two sums on the right hand side of Eq.~(\mref{eq:prodrel4}), we have  respectively
\begin{equation}
\{q_1p_2\ |\ q_1\parallel p_1\} = \calp_1 \cup \calp_3
\mlabel{eq:rhs1}
\end{equation}
and
\begin{equation}
\{p_1q_2\ |\ q_2\parallel p_2\} = \calp_1 \cup \calp_2.
\notag 
\end{equation}
Thus the right hand side of Eq.~(\mref{eq:prodrel4}) becomes \begin{equation}
\sum_{q\in \calp_1 \text{ with } q=q_1p_2=p_1q_2}(c^{p_1}_{q_1}+c^{p_2}_{q_2})q
+ \sum_{q\in\calp_3 \text{ with } q=q_1p_2} c^{p_1}_{q_1} q_1
p_2 + \sum_{q\in \calp_2 \text{ with } q=p_1q_2} c^{p_2}_{q_2}
p_1 q_2. \mlabel{eq:prodrel4-2}
\end{equation}
Now comparing the coefficients on the two sides of
Eq.~(\mref{eq:prodrel4}) using Eq.~(\mref{eq:lhs}) and
Eq.~(\mref{eq:prodrel4-2}), we obtain Eq.~(\mref{eq:prodrel}).
\end{proof}

Now we return to the proof of Theorem~\mref{thm:dcond}. For any path $p\in \calp\backslash V$ with $t(p)\neq h(p)$, we have $t(p) h(p) = 0$. So applying Lemma~\mref{lem:dzero}.(\mref{it:leib0}) to $p_1=t(p)$ and $p_2=h(p)$ in Eq.~(\mref{eq:vdiff}), we obtain Eq.~(\mref{eq:pathrel}).

Finally let $p=p_1p_2$ with $p_1,p_2\in \calp\backslash V$. Applying Lemma~\mref{lem:dzero}.(\mref{it:leib1}) to $p=p_1 p_2$, we obtain Eq.~(\mref{eq:prodrel}).
\smallskip

\noindent ($\Longleftarrow$)  Suppose a linear operator
$D:\bfk\Gamma \to \bfk\Gamma$ is given by Eq.~(\mref{eq:vdiff}) and
(\mref{eq:pdiff}) subject to the conditions Eq.~(\mref{eq:pathrel})
and (\mref{eq:prodrel}). By Lemma~\mref{lem:base}, to show that $D$
is a derivation we just need to show that
Eq.~(\mref{eq:base}) holds for $X= \calp$. Thus we only need to
verify Eq.~(\mref{eq:base}) in the following four cases.
\begin{enumerate}
\item
$x,y\in V$;
\mlabel{it:vv}
\item
$x\in V, y\in \calp\backslash V$;
\mlabel{it:vp}
\item
$x\in \calp\backslash V, y\in V$;
\mlabel{it:pv}
\item
$x,y\in \calp\backslash V.$
\mlabel{it:pp}
\end{enumerate}

\noindent
{\bf Case~(\mref{it:vv}). } Let $x,y\in V$. If $x=y$, then $xy=x$. So by Eq.~(\mref{eq:vdiff}),
\begin{equation}
D(xy)=D(x)= \sum_{q\in \calpa, t(q)=x \text{ or } h(q)=x} c^x_q q.
\notag 
\end{equation}
Also by Eq.~(\mref{eq:vdiff}), we have
\begin{eqnarray*}
D(x)x + x D(x) &=& \Big(\sum_{q\in \calpa, t(q)=x \text{ or } h(q)=x} c^x_q q\Big) x
+ x \Big( \sum_{q\in \calpa, t(q)=x \text{ or } h(q)=x} c^x_q q \Big) \\
&=& \sum_{q\in \calpa, h(q)=x} c^x_q q
+ \sum_{q\in \calpa, t(q)=x} c^x_q q.
\end{eqnarray*}
This verifies Eq.~(\mref{eq:base}).

If $x\neq y$, then $xy=0$. By Eq.~(\mref{eq:vdiff}), we have
\begin{eqnarray*}
&& D(x)y+xD(y)\\
&=& \big (\sum_{q\in \calpa, t(q)=x} c^x_q q + \sum\limits_{q\in
\calpa, h(q) = x} c^x_q q\big) y + x \big (\sum_{q\in \calp_A,
t(q)=y} c^y_q q + \sum\limits_{q\in \calpa, h(q) = y} c^y_q q\big)
\\
&=& \sum\limits_{q\in \calp_A, t(q)=x, h(q)= y} c^x_q q +
\sum\limits_{q\in \calp_A, t(q)= x, h(q) = y} c^y_q q
\\
&=& \sum\limits_{q\in \calp_A, t(q)=x, h(q)= y} (c^x_q + c^y_q) q,
\end{eqnarray*}
which is zero by Eq.~(\mref{eq:pathrel}). This again verifies Eq.~(\mref{eq:base}) in this case.
\medskip

\noindent
{\bf Case~(\mref{it:vp}). } Let $x\in V$ and $y\in \calp\backslash V$. If $t(y)=x$, then $xy=y$. By Eq.~(\mref{eq:pdiff}) we have
\begin{equation}
D(y)= \sum\limits_{q\in \calpa, h(q)=t(y)} c^{t(y)}_q q y +
\sum\limits_{q\parallel y} c^y_q q +
\sum\limits_{q\in \calpa, t(q)=h(y)} c^{h(y)}_q y q. \mlabel{eq:vp1}
\end{equation}
Thus by Eqs.~(\mref{eq:vdiff}) and (\mref{eq:pdiff}) we have
{\allowdisplaybreaks
\begin{eqnarray*}
&&D(x)y+xD(y)\\
&=& \big(\sum\limits_{q\in \calpa, t(q)=x \text{ or } h(q)=x} c^x_q q \big) y  +
x \big( \sum\limits_{q\in \calpa, h(q)=t(y)} c^{t(y)}_q q y + \sum\limits_{q\parallel y} c^y_q q + \sum\limits_{q\in \calpa, t(q)=h(y)} c^{h(y)}_q y q \big)\\
&=&
\sum\limits_{q\in \calpa, h(q)=x} c^x_q q y + \sum\limits_{q\in \calpa, h(q)=t(y), t(q)=x} c^{t(y)}_q q y +
\sum\limits_{q\parallel y} c^y_q q +
\sum\limits_{q\in \calpa, t(q)=h(y)} c^{h(y)}_q y q.
\end{eqnarray*}
}
since $qy=0$ unless $h(q)=t(y)=x$ and $xq=0$ unless $t(q)=t(y)=x$ in which case $xq=q$.
Since $t(y)=x$, the right hand side agrees with the right hand side of Eq.~(\mref{eq:vp1}), proving Eq.~(\mref{eq:base}).

If $t(y)\neq x$, then $xy=0$. By the same argument as in Case~(\mref{it:vv}), we obtain
$$
D(x)y+xD(y) =
\sum\limits_{q\in \calp, t(q)=x, h(q)=t(y)}(c^{t(q)}_q+c^{h(q)}_q) qy$$
which is zero by Eq.~(\mref{eq:pathrel}). Thus Eq.~(\mref{eq:base}) holds.

\medskip

\noindent
{\bf Case~(\mref{it:pv}). } The proof in this case is the same as that of Case~(\mref{it:vp}).

\medskip

\noindent
{\bf Case~(\mref{it:pp}). } Let $x, y$ be in $\calp\backslash V$. If $h(x)=t(y)$. Then $p:=xy$ is a path. So the conclusion follows by taking $p_1=x$, $p_2=y$ in Lemma~\mref{lem:dzero}.(\mref{it:leib1}).

If $h(x)\neq t(y)$, then $xy=0$ and then Eq.~(\mref{eq:base}) follows from Lemma~\mref{lem:dzero}.(\mref{it:leib0}).
\end{proof}

\subsection{A variation of Theorem~\mref{thm:dcond}}

For the convenience of later applications, we give another formulation of Theorem~\mref{thm:dcond} on the condition of a derivation on a path algebra.

\begin{coro}
Let $\Gamma$ be a quiver.
A linear operator $D: \bfk\Gamma \to \bfk\Gamma$ is a derivation if and only if $D$ is determined by its action on the basis $\calp$ as follows.
\begin{enumerate}
\item
Let $v\in V$. Then
\begin{equation}
D(v)=
 \sum_{q\in \calpa, t(q)=v} c^{t(q)}_q q
-\sum_{q\in \calpa, h(q)=v} c^{t(q)}_q q. \mlabel{eq:vdiff2}
\end{equation}
\item
Let $p\in \calp\backslash V$. Then
\begin{equation}
D(p)= \sum_{q\in \calpa, t(q)=h(p)}
c^{t(q)}_q pq+ \sum_{q\parallel p} c^p_q q
- \sum_{q\in \calpa, h(q)=t(p)}
 c^{t(q)}_q qp,
\mlabel{eq:pdiff2}
\end{equation}
where the coefficients $c^p_q$ are subject to the following condition:
For any path $p=p_1p_2$ with $p_1,p_2\in \calp\backslash V$ and
$q\parallel p$, we have
\begin{equation}
c^p_q=c^{p_1p_2}_q = \left \{ \begin{array}{ll}
 c^{p_1}_{q_1}+c^{p_2}_{q_2}, & \text{if\;\;\;} p_2\mhead q \text{ with } q=q_1p_2 \text{ and } p_1\mtail q \text{ with } q=p_1q_2, \\
 c^{p_1}_{q_1}, & \text{if\;\;\;} p_2\mhead q \text{ with } q=q_1p_2 \text{ and } q_1\not\mtail p_1, \\
 c^{p_2}_{q_2}, & \text{if\;\;\;} p_1\mtail q \text{ with } q=p_1q_2 \text{ and } q_2\not\mhead p_2,\\
 0, & \text{if\;\;\;} p_2\not\mhead q \text{ and } p_1\not\mtail q.
 \end{array} \right.
\notag 
 \end{equation}
\end{enumerate}
\mlabel{co:dcond2}
\end{coro}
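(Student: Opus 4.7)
The plan is to derive Corollary~\ref{co:dcond2} directly from Theorem~\ref{thm:dcond} by using condition~(\ref{eq:pathrel}) to eliminate the redundant coefficient $c^{h(q)}_q$ in favor of $c^{t(q)}_q$ for each acyclic path $q$. The two statements describe the same linear operators; Corollary~\ref{co:dcond2} is essentially a rewriting of Theorem~\ref{thm:dcond} in which the constraint $c^{h(q)}_q+c^{t(q)}_q=0$ has been absorbed into the formulas via the substitution $c^{h(q)}_q=-c^{t(q)}_q$, leaving only the product relation~(\ref{eq:prodrel}) to carry.

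For the forward direction, I would start with a derivation $D$ and apply Theorem~\ref{thm:dcond} to obtain Eqs.~(\ref{eq:vdiff}) and~(\ref{eq:pdiff}) together with conditions~(\ref{eq:pathrel}) and~(\ref{eq:prodrel}). In the first sum of Eq.~(\ref{eq:vdiff}), the index satisfies $t(q)=v$, so $c^v_q=c^{t(q)}_q$; in the second sum, $h(q)=v$, so $c^v_q=c^{h(q)}_q=-c^{t(q)}_q$ by~(\ref{eq:pathrel}). This yields Eq.~(\ref{eq:vdiff2}). Applying the same substitution to Eq.~(\ref{eq:pdiff}): in the first sum $h(q)=t(p)$ gives $c^{t(p)}_q=c^{h(q)}_q=-c^{t(q)}_q$, and in the third sum $t(q)=h(p)$ gives $c^{h(p)}_q=c^{t(q)}_q$, producing Eq.~(\ref{eq:pdiff2}). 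The product relation~(\ref{eq:prodrel}) transfers verbatim since it involves only coefficients indexed by paths in $\calp\backslash V$.

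For the backward direction, suppose a linear operator $D$ is given by Eqs.~(\ref{eq:vdiff2}) and~(\ref{eq:pdiff2}) subject to the product condition. For each $q\in\calpa$, define $c^{h(q)}_q:=-c^{t(q)}_q$; this is well defined because $t(q)\neq h(q)$ for acyclic $q$, so $c^{t(q)}_q$ and $c^{h(q)}_q$ are independent symbols. With this convention, Eqs.~(\ref{eq:vdiff2}) and~(\ref{eq:pdiff2}) become exactly Eqs.~(\ref{eq:vdiff}) and~(\ref{eq:pdiff}), and condition~(\ref{eq:pathrel}) holds by the very definition of $c^{h(q)}_q$. Theorem~\ref{thm:dcond} then guarantees that $D$ is a derivation.

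The only real obstacle is careful bookkeeping of which endpoint of $q$ is being referenced in each of the four sums over acyclic paths, to ensure the signs match up correctly; once the dictionary $c^{h(q)}_q\leftrightarrow -c^{t(q)}_q$ is applied consistently, the two formulations are equivalent and no further argument is needed.
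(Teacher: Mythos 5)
Your proposal is correct and follows essentially the same route as the paper: both directions amount to applying the substitution $c^{h(q)}_q=-c^{t(q)}_q$ from Eq.~(\ref{eq:pathrel}) to rewrite Eqs.~(\ref{eq:vdiff}) and (\ref{eq:pdiff}) as Eqs.~(\ref{eq:vdiff2}) and (\ref{eq:pdiff2}), with the product relation unaffected since its coefficients carry only non-vertex superscripts. Your explicit treatment of the converse (defining $c^{h(q)}_q:=-c^{t(q)}_q$, well defined since $t(q)\neq h(q)$ for $q\in\calpa$) is a slightly more careful version of what the paper leaves implicit.
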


\begin{proof}
We only need to show that the condition Eq~(\mref{eq:pathrel}) imposed to Eq.~(\mref{eq:vdiff}) and Eq.~(\mref{eq:pdiff}) in Theorem~\mref{thm:dcond} amount to Eq.~(\mref{eq:vdiff2}) and Eq.~(\mref{eq:pdiff2}).
First, applying Eq.~(\mref{eq:pathrel}), that is $c^{h(p)}_p=-c^{t(p)}_p$ for $p\in \calp\backslash V$, to Eq.~(\mref{eq:vdiff}) gives us Eq.~(\mref{eq:vdiff2}).

Similarly apply Eq.~(\mref{eq:pathrel}) to Eq.~(\mref{eq:pdiff}). For the coefficients in the first sum, we have $c^{h(p)}_q=c^{t(q)}_q$ by the restriction of the sum. For the coefficients in the third sum of Eq.~(\mref{eq:pdiff}) we have $c^{t(p)}_q=c^{h(q)}_p=-c^{t(q)}_q$. Thus the first and third sums in Eq.~(\mref{eq:pdiff}) agree with the corresponding sums in Eq.~(\mref{eq:pdiff2}). This is what we need. \end{proof}

\section{Structure of the Lie algebra $\diff(\bfk\Gamma)$}
\mlabel{sec:str}
In this section, we apply the characterizations (Theorem~\mref{thm:dcond} and Corollary~\mref{co:dcond2}) of a derivation on a path algebra to study the Lie algebra $\diff(\bfk\Gamma)$. We first display a canonical basis for this Lie algebra and then use the basis to establish the multiplication structure of this Lie algebra. As applications, basic properties of this Lie algebra are studies. 

\subsection{The derivation $D_{r,s}$}
Let $r\in E$ and $s\parallel r$. We
construct a linear operator
\begin{equation}
D_{r,s}: \bfk \Gamma \to \bfk \Gamma
\notag 
\end{equation}
by defining $D_{r,s}(p)$ for $p\in \calp$ by induction on the length $\ell(p)$ of $p$.

When $\ell(p)=0$, i.e., when $p\in V$, we define \begin{equation}
D_{r,s}(p)=0.
 \mlabel{eq:rsrec0}
\end{equation}
Assume that $D_{r,s}(p)$ have been defined for $p\in \calp$ with $\ell(p)=n\geq 0$. Consider $p\in \calp$
with $\ell(p)=n+1$. Then $p=p_1 \tilde{p}$ with $p_1\in E$ and $\tilde{p}\in \calp$ with $\ell(\tilde{p})=n$.
 We then define
 \begin{equation}
D_{r,s}(p)=\left \{\begin{array}{ll} s \tilde{p}+p_1D_{r,s}(\tilde{p}), & p_1=r, \\
p_1D_{r,s}(\tilde{p}), & p_1\neq r.
\end{array}
\right .
\mlabel{eq:rsrec}
\end{equation}

\begin{prop} For a quiver $\Gamma=(V,E)$, let $r\in E$ and $s\parallel r$. The linear operator $D_{r,s}$ recursively defined by Eqs.~(\mref{eq:rsrec0}) and (\mref{eq:rsrec}) have the following explicit formula. For any $p\in \calp$ with the standard decomposition
 $p=v_0p_1v_1\cdots p_kv_k$ with $v_0,\cdots,v_k\in V,
p_1,\cdots,p_k\in E$, we have \begin{equation}
D_{r,s}(p)=\left\{\begin{array}{ll}
0,  & k=0, \\
\sum\limits_{i=1}^k v_0p_{i,1}v_1\cdots p_{i,k}v_k, & k>0,
\end{array} \right .
\mlabel{eq:existq}
\end{equation}
where $p_{i,j}$ or, more precisely, $p^{(r,s)}_{i,j}$, is defined by
\begin{equation}
p_{i,j}:=p^{(r,s)}_{i,j}:=\left \{\begin{array}{ll} s, & i=j, p_i=r, \\
    0, & i=j, p_i\neq r, \\ p_j, & i\neq j.
    \end{array} \right .
\mlabel{eq:pij}
\end{equation}
\mlabel{lem:drs}
\end{prop}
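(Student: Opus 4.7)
The plan is to prove the explicit formula by induction on the length $\ell(p) = k$ of the path $p$, mirroring the recursive definition of $D_{r,s}$. The content of the proposition is essentially that $D_{r,s}$ is the unique operator that, on any path, sums over all positions where the arrow $r$ appears and replaces that occurrence by $s$ (leaving all other letters unchanged, while zeroing out the term if a non-$r$ arrow is ``replaced''). This matches the recursion, so induction should work cleanly.

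Base case: when $k=0$, $p \in V$ and $D_{r,s}(p) = 0$ by Eq.~(\ref{eq:rsrec0}); on the right-hand side the sum over $i$ from $1$ to $0$ is empty, also equal to $0$. For the inductive step, assume the formula holds for paths of length $k \geq 0$ and consider $p = v_0 p_1 v_1 \cdots p_{k+1} v_{k+1}$ of length $k+1$. Write $\tilde p = v_1 p_2 v_2 \cdots p_{k+1} v_{k+1}$, so $\ell(\tilde p) = k$ and $p = p_1 \tilde p$. By the inductive hypothesis applied to $\tilde p$,
\begin{equation*}
D_{r,s}(\tilde p) = \sum_{i=2}^{k+1} v_1 \tilde p_{i,2} v_2 \cdots \tilde p_{i,k+1} v_{k+1},
\end{equation*}
where $\tilde p_{i,j}$ is obtained from $\tilde p$ according to the recipe in Eq.~(\ref{eq:pij}) (with index $j$ running over positions $2,\dots,k+1$ of the arrows of $\tilde p$).

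Now split into two cases according to Eq.~(\ref{eq:rsrec}). If $p_1 = r$, then $D_{r,s}(p) = s \tilde p + p_1 D_{r,s}(\tilde p)$. Since $s \parallel r = p_1$, we have $t(s) = v_0$ and $h(s) = v_1$, so $s\tilde p = v_0 s v_1 p_2 v_2 \cdots p_{k+1} v_{k+1}$, which is exactly the $i=1$ summand in Eq.~(\ref{eq:existq}) (where $p_{1,1} = s$ because $p_1 = r$, and $p_{1,j} = p_j$ for $j \geq 2$). Left-multiplying the inductive expression for $D_{r,s}(\tilde p)$ by $p_1$ prepends $v_0 p_1 v_1$ (since $h(p_1) = v_1 = t(\tilde p)$), producing precisely the summands for $i = 2, \dots, k+1$ in Eq.~(\ref{eq:existq}): in those terms $p_{i,1} = p_1$ because $i \neq 1$. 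Summing gives the desired formula. If $p_1 \neq r$, then $D_{r,s}(p) = p_1 D_{r,s}(\tilde p)$, and the $i = 1$ summand in Eq.~(\ref{eq:existq}) is zero because $p_{1,1} = 0$ when $p_1 \neq r$; the remaining summands $i = 2, \dots, k+1$ arise from $p_1 D_{r,s}(\tilde p)$ by the same reindexing argument.

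There is no real obstacle here; the only thing to execute carefully is the index bookkeeping between the decomposition of $\tilde p$ (arrows in positions $2,\dots,k+1$) and that of $p$ (arrows in positions $1,\dots,k+1$), and the verification that the recipe Eq.~(\ref{eq:pij}) is stable under this shift (i.e., $\tilde p_{i,j} = p_{i,j}$ for $i,j \geq 2$, which is immediate from the definition). Once that is noted, Case 1 and Case 2 combine to give Eq.~(\ref{eq:existq}) for $p$, completing the induction.
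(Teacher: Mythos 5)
Your proof is correct and follows essentially the same route as the paper's: induction on $\ell(p)$, writing $p=p_1\tilde p$, applying the recursion in the two cases $p_1=r$ and $p_1\neq r$, and checking that the index shift between the decompositions of $\tilde p$ and $p$ is compatible with Eq.~(\ref{eq:pij}). No issues.
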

For example, for $p\in\calp$ with standard decomposition
$p=v_0p_1v_1p_2v_2p_3v_3p_4v_4$ where $p_1, p_3=r$ and $p_2, p_4\neq
r$,
 we have
$$D_{r,s}(p)=v_0 s v_1p_2v_2 rv_3p_4v_4 + v_0 rv_1p_2v_2 sv_3p_4v_4
$$

\begin{proof} Let $D$ be defined by Eq.~(\mref{eq:existq}). We just need to show that $D_{r,s}(p)=D(p)$ for all $p\in \calp$. We prove this by induction on $\ell(p)$.
When $\ell(p)=0$, then $D_{r,s}(p)=0=D(p)$ by the definitions of $D_{r,s}$ and $D$. Assume the equation holds for $\ell(p)=k$ for $k\geq 0$, and consider $p\in \calp$ with $\ell(p)=k+1$. Then $p=v_0p_1 \tilde{p}=p_1\tilde{p}$ with $p_1\in E$ and $\tilde{p}\in
\calp$ with $\ell(\tilde{p})=k$. Let $\tilde{p}=v_1p_2v_2\cdots p_{k+1}v_{k+1}$
be the standard decomposition of $\tilde p$. Then by the induction hypothesis we have
{\allowdisplaybreaks
\begin{eqnarray*}
D_{r,s}(p)&=&\left \{\begin{array}{ll} s \tilde{p}+p_1D_{r,s}(\tilde{p}), & p_1=r \\
p_1D_{r,s}(\tilde{p}), & p_1\neq r
\end{array} \right . \\
&=& \left \{\begin{array}{ll} v_0sv_1p_2v_2\cdots
p_{k+1}v_{k+1}+v_0p_1\sum_{i=2}^{k+1} v_1p_{i,2}v_2\cdots
p_{i,k+1}v_{k+1}
, & p_1=r \\
v_0p_1\sum_{i=2}^{k+1} v_1p_{i,2}v_2\cdots p_{i,k+1}v_{k+1}, &
p_1\neq r
\end{array}
\right . \\&=& \left \{\begin{array}{ll} v_0sv_1p_2v_2\cdots
p_{k+1}v_{k+1}+\sum_{i=2}^{k+1}v_0p_1 v_1p_{i,2}v_2\cdots
p_{i,k+1}v_{k+1}
, & p_1=r \\
\sum_{i=2}^{k+1} v_0p_1v_1p_{i,2}v_2\cdots p_{i,k+1}v_{k+1}, &
p_1\neq r
\end{array}
\right . \\
& =&\sum_{i=1}^{k+1} v_0p_{i,1}v_1\cdots p_{i,k+1}v_{k+1},
\notag 
\end{eqnarray*}
}
where $p_{i,j}$ is defined by Eq.~(\mref{eq:pij}).
Since this agrees with $D(p)$, the induction is completed.
\end{proof}

\begin{theorem}
For $r\in E$ and $s\parallel r$, the linear operator $D_{r,s}: \bfk\Gamma \to \bfk\Gamma$ defined by Eq.~(\mref{eq:rsrec0}) and Eq.~(\mref{eq:rsrec}) is a derivation.
\mlabel{thm:rsdiff}
\end{theorem}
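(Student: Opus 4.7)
The plan is to invoke Lemma~\mref{lem:base} and verify the Leibniz rule $D_{r,s}(xy)=D_{r,s}(x)y+xD_{r,s}(y)$ directly for basis elements $x,y\in\calp$, using the closed form expression provided by Proposition~\mref{lem:drs}. The key observation is that every summand appearing in the explicit expansion of $D_{r,s}(p)$ is a path of the same length as $p$ obtained by replacing one occurrence of the arrow $r$ in the arrow decomposition of $p$ by $s$. Since $s\parallel r$, this replacement preserves both the tail and the head, so every summand is parallel to $p$; in particular $D_{r,s}$ sends $\bfk p$ into the $\bfk$-span of paths sharing the endpoints of $p$.

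With this parallelness observation in hand, the Leibniz rule splits into cases according to whether $x,y$ lie in $V$ or in $\calp\setminus V$. If either is a vertex $v$, then $D_{r,s}(v)=0$ by Eq.~(\mref{eq:rsrec0}); combined with the fact that a vertex acts as an identity (resp.\ as zero) on a path parallel to the other factor exactly when the endpoints match (resp.\ do not match), both sides of the Leibniz rule either vanish (when $h(x)\neq t(y)$) or agree with $D_{r,s}$ of the nontrivial factor. If both $x,y\in\calp\setminus V$ but $h(x)\neq t(y)$, then $xy=0$; and for any summand $x'$ of $D_{r,s}(x)$ we have $h(x')=h(x)\neq t(y)$, so $x'y=0$, and symmetrically each summand of $xD_{r,s}(y)$ vanishes.

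The essential case is $x,y\in\calp\setminus V$ with $h(x)=t(y)$. Writing the arrow decompositions $x=a_1\cdots a_m$ and $y=b_1\cdots b_n$, so that $xy=a_1\cdots a_m b_1\cdots b_n$, I would apply Proposition~\mref{lem:drs} to $xy$ and split the resulting sum over positions $1\le i\le m+n$ at which the $i$-th arrow of $xy$ equals $r$ into those with $i\le m$ and those with $i>m$. The partial sum over $i\le m$ factors as $\bigl(\sum_{i\le m,\, a_i=r} a_1\cdots a_{i-1}\,s\,a_{i+1}\cdots a_m\bigr)\cdot y = D_{r,s}(x)\,y$, and symmetrically the partial sum over $i>m$ equals $x\cdot D_{r,s}(y)$. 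Adding these gives the desired identity.

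No genuine obstacle is anticipated, since the closed form in Proposition~\mref{lem:drs} reduces everything to a bookkeeping of positions of $r$ in an arrow decomposition. The only point requiring care is the case analysis on $V$ versus $\calp\setminus V$, where the parallelness observation is invoked repeatedly to guarantee that multiplication by a vertex or by an incompatible factor behaves correctly on summands of $D_{r,s}$.
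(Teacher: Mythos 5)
Your proof is correct, but it takes a different route from the paper's. The paper proves Theorem~\ref{thm:rsdiff} by induction on $\ell(p)$ directly from the recursion~(\ref{eq:rsrec}): the base case $\ell(p)=0$ is handled exactly by your ``parallelness'' observation (the paper notes $t(D_{r,s}(q))=t(q)$), and the inductive step writes $p=p_1\tilde p$ and applies the induction hypothesis to $D_{r,s}(\tilde p q)$, so that the Leibniz identity falls out because the recursion was built to mimic it. You instead invoke the closed form of Proposition~\ref{lem:drs} and verify the Leibniz rule for all $x,y\in\calp$ by splitting the sum over positions of $r$ in the arrow decomposition of $xy$ into positions lying in $x$ and positions lying in $y$. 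Both arguments are legitimate (Proposition~\ref{lem:drs} is established before the theorem, so you may use it); the paper's induction is shorter to write and does not need the explicit formula, while your version makes the combinatorial content more visible --- $D_{r,s}$ is a ``sum over occurrences of $r$'' operator, and such operators are derivations for purely positional reasons. One cosmetic slip: a summand of $D_{r,s}(p)$ need not have the \emph{same length} as $p$, since $s$ is only required to be a path parallel to $r$, not an arrow; what your argument actually uses is only that each summand is parallel to $p$, which is correct and suffices for every case you treat.
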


\begin{proof}
By Lemma~\mref{lem:base}, we only need to verify
\begin{equation}
D_{r,s}(pq)=D_{r,s}(p)q + p D_{r,s}(q), \quad \forall p, q\in\calp.
\mlabel{eq:rsbase}
\end{equation}
We will prove this by induction on $\ell(p)$.

Let $\ell(p)=0$. Then $pq=q$ if $p=t(q)$ and $pq=0$ if $p\neq t(q)$. First consider the case when $D_{r,s}(q)=0$. Then both sides of Eq.~(\mref{eq:rsbase}) are zero. So we are done. Next consider the case when $D_{r,s}(q)\neq 0$. Then we have $t(D_{r,s}(q))=t(q)$ by the definition of $D_{r,s}(q)$. Thus if $t(q)\neq p$, then both sides of Eq.~(\mref{eq:rsbase}) are zero and we are done again. If $t(q)=p$, then both sides of Eq.~(\mref{eq:rsbase}) equal to $D_{r,s}(q)$, as needed.

Next assume that Eq.~(\mref{eq:rsbase}) has been proved for $p\in \calp$ with $\ell(p)=n\geq 0$ and consider $p\in \calp$ with $\ell(p)=n+1$. Then we can write $p=p_1\tilde{p}$ and obtain
{\allowdisplaybreaks
\begin{eqnarray*}
D_{r,s}(pq)&=& D_{r,s}(p_1\tilde{p}q) \\
&=& \left \{ \begin{array}{ll}
s\tilde{p}q + p_1 D_{r,s}(\tilde{p}q), & p_1=r, \\
p_1 D_{r,s}(\tilde{p}q), & p_1\neq r. \end{array}\right .
\quad \text{(by Eq.~(\mref{eq:rsrec}))} \\
&=& \left \{ \begin{array}{ll}
s\tilde{p}q + p_1 (D_{r,s}(\tilde{p})q+\tilde{p}D_{r,s}(q)), & p_1=r, \\
p_1 (D_{r,s}(\tilde{p})q+\tilde{p}D_{r,s}(q)), & p_1\neq r. \end{array}\right . \quad \text{(by induction hypothesis)}\\
&=& \left \{\begin{array}{ll}
D_{r,s}(p) q + p D_{r,s}(q), & p_1 =r, \\
D_{r,s}(p) q + p D_{r,s}(q), & p_1 \neq r. \\
\end{array} \right . \quad \text{(by Eq.~(\mref{eq:rsrec}))}
\end{eqnarray*}
}
This completes the induction.
\end{proof}

As an immediate consequence of Theorem~\mref{thm:dcond} and Theorem~\mref{thm:rsdiff}, we prove the following existence theorem of derivations on path algebras. Note that the zero map on any algebra is a derivation.

\begin{coro}
There is a nonzero derivation on the path algebra $\bfk\Gamma$ of a quiver $\Gamma$ if and only if $\Gamma$ is a non-trivial quiver, that is, $\Gamma$ has at least one arrow.
Equivalently, $\diff(\bfk\Gamma)$ is a non-zero Lie algebra if and only if $\Gamma$ is a non-trivial quiver.
\mlabel{co:exist}
\end{coro}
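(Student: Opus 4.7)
The plan is to prove the corollary by splitting into the two directions of the "if and only if" and invoking the two preceding theorems: Theorem~\mref{thm:dcond} for the trivial direction and Theorem~\mref{thm:rsdiff} for the nontrivial direction. The final "equivalently" clause will follow immediately because $\diff(\bfk\Gamma)$ is a nonzero Lie algebra precisely when it contains at least one nonzero element.

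For the direction that a nontrivial quiver yields a nonzero derivation, I would pick any arrow $r\in E$ (which exists by hypothesis) and take $s=r$. Since trivially $r\parallel r$, Theorem~\mref{thm:rsdiff} supplies the derivation $D_{r,r}\in\diff(\bfk\Gamma)$. To see that this derivation is nonzero, I would evaluate it on $r$ itself using the recursion~(\mref{eq:rsrec}): writing $r=p_1\tilde p$ with $p_1=r\in E$ and $\tilde p=h(r)\in V$, and noting that $D_{r,r}(\tilde p)=0$ by~(\mref{eq:rsrec0}), we get $D_{r,r}(r)=s\tilde p+p_1D_{r,r}(\tilde p)=r\cdot h(r)=r\neq 0$. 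Thus $D_{r,r}$ is a nonzero element of $\diff(\bfk\Gamma)$.

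For the converse, assume $\Gamma$ is trivial, i.e., $E=\emptyset$. Then every path has length $0$, so $\calp=V$, and in particular every path $p$ satisfies $t(p)=h(p)$. Consequently the set $\calpa$ of acyclic paths is empty. Let $D\in\diff(\bfk\Gamma)$ be arbitrary. By Theorem~\mref{thm:dcond}, for each $v\in V$ the value $D(v)$ is given by~(\mref{eq:vdiff}), which is a sum indexed by acyclic paths meeting $v$; since $\calpa=\emptyset$, this sum is empty and $D(v)=0$. As $V$ is a basis of $\bfk\Gamma$ in this case, we conclude $D=0$, so $\diff(\bfk\Gamma)=0$.

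No step here is really an obstacle: the preceding structural results do essentially all the work. The only mild point of care is checking that the recursive formula for $D_{r,r}$ gives $D_{r,r}(r)=r$ on the length-one path $r$ (rather than $0$ or $2r$), and that "acyclic" in the paper's sense excludes vertices, which is clear from the condition $h(p)\neq t(p)$ in the definition.
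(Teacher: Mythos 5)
Your proposal is correct and follows essentially the same route as the paper: for a nontrivial quiver the paper also takes an arrow $p_0$ and observes $D_{p_0,p_0}(p_0)=p_0\neq 0$ via Theorem~\mref{thm:rsdiff}, and for a trivial quiver it likewise invokes Eq.~(\mref{eq:vdiff}) with $\calpa=\emptyset$ to force $D(v)=0$ on the basis $V$. Your extra check of the recursion giving $D_{r,r}(r)=r$ is a harmless elaboration of a step the paper simply asserts.
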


\begin{proof}
Suppose $\Gamma$ contains only vertices. Let $D:\bfk\Gamma \to
\bfk\Gamma$ be a derivation. Then by
Eq.~(\mref{eq:vdiff}), we have $ D(v)=
  \sum\limits_{q\in \calp_A, t(q)=v \text{ or } h(q)= v} c^v_q q.$
Since $\calp_A=\emptyset$ in this case,  $D(v)=0$ for all $v\in V$.
Since $V$ is a basis of $\call$, $D$ is the zero map.

Conversely, Suppose $\Gamma$ contains an arrow $p_0$. Then we have the derivation $D_{p_0,p_0}$ by Theorem~\mref{thm:rsdiff}. Since $D_{p_0,p_0}(p_0)=p_0$ which is nonzero, we have obtained a non-zero derivation on $\bfk\Gamma$.
\end{proof}

\subsection{A canonical basis of $\diff(\bfk\Gamma)$}

We now display a canonical basis of the Lie algebra $\diff(\bfk\Gamma)$ for a quiver $\Gamma$. For a given $s\in \calp$, we have the inner derivation
\begin{equation}
D_s:\bfk\Gamma\to \bfk\Gamma, \quad D_s(q)=sq-qs, \quad \forall q\in \calp.
\notag 
\end{equation}

\begin{theorem}
Let $\Gamma$ be a quiver.
A basis of the Lie algebra $\diff(\bfk\Gamma)$ is given by the set
\begin{equation}
\frak B:=\frakBa \cup \frakBal
\notag 
\end{equation}
where
\begin{equation}
\frakBa:=\{D_s\ |\ s\in \calpa\} \quad \text{and} \quad
\frakBal:=\{D_{r,s}\ |\ r\in E ,s\parallel r\}. \mlabel{eq:baal}
\end{equation}
Thus
$\diff(\bfk\Gamma)=\frakDa\oplus\frakDal$ where $\frakD_i$ are the
$\bf k$-linear space with bases $\frakB_i$ for $i=1,2$.
\mlabel{thm:dbase}
\end{theorem}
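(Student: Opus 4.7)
The plan is to establish spanning and linear independence for $\frak B$ separately, noting at the outset that $\frak B\subseteq\diff(\bfk\Gamma)$ since each $D_s$ is an inner derivation and each $D_{r,s}$ is a derivation by Theorem~\mref{thm:rsdiff}. The strategy for spanning is to correct an arbitrary $D\in\diff(\bfk\Gamma)$ by a linear combination of the inner derivations $D_s$ so that the result vanishes on $V$, and then exploit the fact that a derivation of $\bfk\Gamma$ is determined by its action on generators $V\cup E$ together with Proposition~\mref{lem:drs} to recognize the corrected operator as a $\bfk$-combination of the $D_{r,s}$.

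Concretely, given $D\in\diff(\bfk\Gamma)$, Corollary~\mref{co:dcond2} produces scalars $c^{t(s)}_s$ ($s\in\calpa$) that record $D$ on vertices, and scalars $c^r_s$ ($r\in E,\ s\parallel r$) that record the middle part of $D$ on arrows. I would set
\[
D_0 \;:=\; D + \sum_{s\in\calpa} c^{t(s)}_s\,D_s.
\]
Using $D_s(v)=(\delta_{h(s),v}-\delta_{t(s),v})\,s$ and matching against Eq.~(\mref{eq:vdiff2}) yields $D_0(v)=0$ for every $v\in V$. A similar calculation on an arrow $r$, now using $D_s(r)=\delta_{h(s),t(r)}\,sr-\delta_{t(s),h(r)}\,rs$, shows the outer (left- and right-multiplication) sums in Eq.~(\mref{eq:pdiff2}) are cancelled exactly by $\sum_s c^{t(s)}_s D_s(r)$, so that $D_0(r)=\sum_{q\parallel r}c^r_q\,q$. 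On the other hand, $D':=\sum_{r\in E,\,s\parallel r}c^r_s D_{r,s}$ satisfies $D'(v)=0$ by Eq.~(\mref{eq:rsrec0}) and $D'(r_0)=\sum_{s\parallel r_0}c^{r_0}_s s$ by Proposition~\mref{lem:drs}. So $D_0$ and $D'$ are two derivations of $\bfk\Gamma$ agreeing on the generating set $V\cup E$, hence equal by the Leibniz rule (which turns equality on generators into equality on all of $\bfk\Gamma$). This gives $D = -\sum_{s} c^{t(s)}_s D_s + \sum_{r,s} c^r_s D_{r,s}\in\mrm{span}(\frak B)$, and notably sidesteps any direct unwinding of the recursive identity Eq.~(\mref{eq:prodrel}).

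For linear independence, suppose $\sum_{s\in\calpa}\alpha_s D_s+\sum_{r\in E,\,s\parallel r}\beta_{r,s}D_{r,s}=0$. Evaluating on a vertex $v$ annihilates all $D_{r,s}$ terms and leaves $\sum_{s\in\calpa}\alpha_s(\delta_{h(s),v}-\delta_{t(s),v})s=0$ in $\bfk\Gamma$; specializing to $v=t(s_0)$ for each $s_0\in\calpa$ (noting $h(s_0)\neq t(s_0)$ since $s_0$ is acyclic) forces $\alpha_{s_0}=0$. The leftover relation $\sum_{r,s}\beta_{r,s}D_{r,s}=0$, applied to an arrow $r_0$, gives $\sum_{s\parallel r_0}\beta_{r_0,s}\,s=0$, whence $\beta_{r_0,s}=0$ for every $s\parallel r_0$. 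The direct-sum statement $\diff(\bfk\Gamma)=\frakDa\oplus\frakDal$ follows immediately from the definitions of the subspaces and their bases. The main subtlety I expect is the bookkeeping for the cancellation $D_0(r)=\sum_{q\parallel r}c^r_q q$ on arrows; once this is in place, the ``agreement on $V\cup E$'' argument dissolves what would otherwise be a tedious combinatorial verification of Eq.~(\mref{eq:prodrel}) for paths of arbitrary length.
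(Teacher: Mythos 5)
Your proposal is correct and follows essentially the same route as the paper: both arguments use Corollary~\ref{co:dcond2} to read off the coefficients of $D$ on $V\cup E$, identify $D$ with the combination $-\sum_s c^{t(s)}_s D_s+\sum_{r,s}c^r_s D_{r,s}$ by checking agreement on the generating set $V\cup E$ and invoking the Leibniz rule, and prove independence by evaluating first at vertices (where the $D_{r,s}$ vanish and acyclicity separates the head and tail sums) and then at arrows. Your reorganization of the spanning step---subtracting the inner part to make the operator vanish on $V$ before matching against the $D_{r,s}$---is only a cosmetic variant of the paper's construction of $\bar D$.
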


We will call $\frakB$ the {\bf canonical basis} of $\diff(\bfk\Gamma)$.
\begin{proof}
Since the operators in $\frakB$ are derivations by Theorem~\mref{thm:rsdiff}, we only need to verify that the operators in $\frakB$ are linearly independent and that $\frakB$ spans the whole space of derivations.

\smallskip

\noindent
{\bf Step 1. $\frakB$ is linearly independent. }
Suppose there are $c_s, c_{r,s}\in \bfk$ such that
$$D:= \sum_{s\in \calpa} c_s D_s
+ \sum_{r\in E,s\parallel r}
    c_{r,s} D_{r,s}=0.    $$
Then for any given $s_0\in \calpa$, by the definitions of $D_s$ and $D_{r,s}$ we have
\begin{eqnarray*}
0&=& D(h(s_0))\\
&=& \sum_{s\in \calpa} c_s D_s(h(s_0)) \\
&=& \sum_{s\in \calpa} c_s \big(s\,h(s_0) - h(s_0)s\big) \\
&=& \sum_{s\in \calpa,\;h(s)=h(s_0)} c_s s -\sum_{s\in
\calpa,\; t(s)=h(s_0)} c_s s
\end{eqnarray*}
Since $h(s)\neq t(s)$ in the sums, the index sets of the two sums
are disjoint. Thus both the sums equal to zero and hence $c_s=0$ for
all $s\in \calpa$ with $h(s)=h(s_0)$. In particular, $c_{s_0}=0$. Thus,
$$D= \sum_{r\in E, s\parallel r}
    c_{r,s} D_{r,s}.  $$
Further, for any given $r_0\in E$ and $s_0\parallel r_0$, by the definitions of $D_s$ and
$D_{r,s}$ we have $ 0= D(r_0) =\sum\limits_{s_0\neq s\in \calp,
\; s\parallel r_0}
    c_{r_0,s} s +c_{r_0,s_0} s_0,    $
and hence $c_{r_0,s_0}=0$.

Thus we have proved that $\frakB$ is linearly independent.

\smallskip

\noindent
{\bf Step 2. $\frakB$ is a spanning set of derivations on $\bfk\Gamma$. }
Let $D: \bfk\Gamma \to \bfk\Gamma$ be a given derivation. Then $D$ is defined by Eq.~(\mref{eq:vdiff2}) and Eq.~(\mref{eq:pdiff2}) in Corollary~\mref{co:dcond2}.
In particular, for $p\in E$, by Eq.~(\mref{eq:pdiff2}) we have
\begin{equation}
D(p)=  \sum_{q\in \calpa, t(q)=h(p)}
c^{t(q)}_q pq+  \sum_{q\parallel p}  c^{p}_{q} q
-  \sum_{q\in \calpa, h(q)=t(p)}
 c^{t(q)}_q qp \mlabel{eq:pdiff4b}
\end{equation}
for certain coefficients $c^{t(q)}_q\in \bfk$ where $q\in \calp_A$ with $t(q)=h(p)$ and $c^p_q\in \bfk$ where $q\parallel p$.

We claim that $D$ agrees with the operator $\bar{D}$ defined by the linear combination
\begin{equation}
    \bar{D}= -\sum_{s\in \calpa} c^{t(s)}_s D_s + \sum_{r\in
E, s\in \calp,\; h(s)=h(r), t(s)=t(r)} c^r_s D_{r,s},
\notag 
\end{equation}
obtained by the same coefficients appeared in Eq.~(\mref{eq:pdiff4b}).
As a linear combination of derivations, $\bar{D}$ is also
a derivation. Any path in $\calp$ is either a vertex or a
product of arrows. Thus by the product rule of derivations, to show the equality of $D$ and $\bar{D}$, we only need
to verify that $D(q)=\bar{D}(q)$ for each $q=v\in V$ and $q=p\in E$.

First let $q=v\in V$. Since $D_{r,s}(v)=0$,  we have
$$\bar{D}(v)= -\sum_{s\in \calpa} c^{t(s)}_s D_s(v)
= -\sum_{s\in \calpa} c^{t(s)}_s (sv-vs)
= -\sum_{s\in \calpa,
h(s)=v} c^{t(s)}_s sv + \sum_{s\in \calpa, t(s)=v} c^{t(s)}_s vs. $$
Here the last equality follows since, in the first sum, $sv$ is $s$ if $h(s)=v$ and is
zero otherwise, and in the second sum, $vs$ is $s$ if $t(s)=v$ and is zero otherwise. Thus $\bar{D}(v)$ agrees with
$D(v)$ as defined in Eq.~(\mref{eq:vdiff2}).

Next let $q=p\in E$.
Then $D_{r,s}(p)$ is $s$ if $r=p$ and is 0 otherwise. Thus we have

\begin{eqnarray*}
\bar{D}(p)&=& -\sum_{s\in \calpa} c^{t(s)}_s D_s(p)
+ \sum_{r\in E, s\parallel r} c^{r}_{s} D_{r,s}(p)\\
&=& -\sum_{s\in \calpa} c^{t(s)}_s (-ps+sp)
+ \sum_{s\parallel p} c^{p}_{s} s\\
&=& \sum_{s\in \calpa, t(s)=h(p)} c^{t(s)}_s ps- \sum_{s\in \calpa,
h(s)=t(p)} c^{t(s)}_s sp + \sum_{s\parallel p}
c^{p}_{s} s.
\end{eqnarray*}
This agrees with $D(p)$ in
Eq.~(\mref{eq:pdiff4b}). It means $\bar{D}=D$, showing that $\frakB$
spans $\diff(\bfk\Gamma)$.

The proof of Theorem~\mref{thm:dbase} is completed. \end{proof}

\subsection{Structure theorem of $\diff(\bfk\Gamma)$}

\delete{
Let $C(\bfk\Gamma)$ be the center of  $\bfk\Gamma$. A quiver $\Gamma$ is called an {\bf oriented loop} if it has a unique vertex $v$ and a unique arrow from $v$ to $v$. By~\mcite{CB},
we have
\begin{equation}
C(\bfk\Gamma)=\left \{\begin{array}{ll} \bfk[T], & \Gamma \text{ is\ an\ oriented\ loop}\ T, \\
\bfk, & \text{ otherwise.}
\end{array} \right .
\notag 
\end{equation}
Thus we obtain the following consequence of Proposition~\mref{pp:diff}.
\begin{coro}
If a quiver $\Gamma$ is not an oriented loop, then we have a Lie algebra embedding
\begin{equation}
\bfk\Gamma/\bfk \cong \indiff(\bfk\Gamma)\hookrightarrow \diff(\bfk\Gamma).
\mlabel{eq:embed}
\end{equation}
\mlabel{co:qcent}
\end{coro}
When $\Gamma$ is an oriented loop, $C(\bfk\Gamma)=\bfk\Gamma$. So in this case $\im \frakD= \indiff(A)$ is equal to zero.
}

Let $p\in E$ and let $\sum_{i=1}^k c_i q_i\in \bfk\Gamma$ with $c_i\in\bfk$ and $q_i\in \calp$.
We denote
\begin{equation}
 D_{p,\sum_{i=1}^k c_i q_i}= \sum_{i=1, q_i\parallel p}^k c_i D_{p,q_i}.
 \mlabel{eq:lie40}
 \end{equation}

\begin{theorem}
{\bf (Basis Theorem of $\diff(\bfk\Gamma)$)} 
For derivations in $\diff(\bfk\Gamma)$, the following relations hold.
\begin{eqnarray}
[D_p,D_r] &=&D_{[p,r]},\ \ \ \ \text{for} \quad p,r\in \calp, \mlabel{eq:lie1} \\
{}  [D_p,D_{r,s}] &=& D_{D_{r,s}(p)}, \ \ \ \ \text{for} \quad r\in E, p,s\in \calp, s\parallel r,  \mlabel{eq:lie2} \\
{}  [D_{r,s},D_{p,q}] &=& D_{p,D_{r,s}(q)}-D_{r,D_{p, q}(s)}, \ \ \ \ \text{for}
\quad r,p\in E, s\parallel r, q\parallel p. \mlabel{eq:lie3}
\end{eqnarray}
\mlabel{thm:lie}
\end{theorem}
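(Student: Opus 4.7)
Each of the three identities is an equality of linear operators on $\bfk\Gamma$, so by linearity it suffices to apply both sides to an arbitrary element $y \in \bfk\Gamma$ and check that the resulting elements agree. Identity (\mref{eq:lie1}) is precisely the assertion that the map $\frakD: Lie(\bfk\Gamma) \to \diff(\bfk\Gamma)$, $a \mapsto D_a$, recorded in Eq.~(\mref{eq:indh}), is a Lie algebra homomorphism; its verification is a one-line calculation in which one expands $[D_p, D_r](y) = p(ry-yr)-(ry-yr)p - r(py-yp)+(py-yp)r$ and observes that the mixed terms $pyr$ and $rpy$, etc., cancel, leaving $(pr-rp)y - y(pr-rp) = D_{[p,r]}(y)$.

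For identity (\mref{eq:lie2}), I would invoke the general principle that for any derivation $D$ of an algebra $A$ and any $a \in A$, the bracket of $D$ with the inner derivation $D_a$ is the inner derivation associated with $D(a)$ (up to the sign convention chosen in the definition of $D_a$). Applied with $D = D_{r,s}$ and $a = p$, one expands $[D_p, D_{r,s}](y)$ and applies the Leibniz rule $D_{r,s}(py) = D_{r,s}(p)y + p D_{r,s}(y)$ and $D_{r,s}(yp) = D_{r,s}(y)p + y D_{r,s}(p)$; the four terms containing $D_{r,s}(y)$ cancel in pairs, leaving exactly $D_{r,s}(p)\,y - y\,D_{r,s}(p) = D_{D_{r,s}(p)}(y)$ after the linear extension of $a\mapsto D_a$ to $\bfk\Gamma$.

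The main obstacle is identity (\mref{eq:lie3}), where both sides are non-inner derivations that must be compared directly on paths. My plan is to use the explicit formula from Proposition~\mref{lem:drs}: for a path $x\in\calp$ with standard decomposition $x = v_0 x_1 v_1 \cdots x_n v_n$, the element $D_{r,s}(x)$ is the sum, over positions $i$ with $x_i = r$, of the path obtained from $x$ by replacing the $i$-th arrow by $s$ and leaving everything else unchanged. Iterating, $D_{p,q}(D_{r,s}(x))$ is then a double sum indexed by a position $i$ with $x_i = r$ (the $r$-to-$s$ substitution) and a position $j$ in the resulting path at which the arrow equals $p$ (the $p$-to-$q$ substitution); the index $j$ either lies inside the inserted copy of $s$, or at a position $j \neq i$ of $x$ with $x_j = p$. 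A symmetric analysis applies to $D_{r,s}(D_{p,q}(x))$.

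When one forms the commutator $[D_{r,s}, D_{p,q}](x)$, the contributions corresponding to substitutions performed at two distinct original positions of $x$ (one $r\to s$ at position $i$ and one $p\to q$ at position $j\neq i$) appear identically in both orderings and therefore cancel. What remains are the \emph{nested} terms: contributions in which the $r\to s$ substitution is performed at an arrow within an inserted copy of $q$ assemble, by the notational convention (\mref{eq:lie40}), into $D_{p, D_{r,s}(q)}(x)$; and contributions in which the $p\to q$ substitution is performed inside an inserted copy of $s$ assemble into $D_{r, D_{p,q}(s)}(x)$. The signs appear with opposite orientation in the commutator, yielding exactly the right-hand side of (\mref{eq:lie3}). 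The principal technical difficulty is the bookkeeping of indices---keeping track of which block (original arrow versus inserted $q$ or $s$) each substitution lands in, and verifying cancellation position by position. I would formalize this by induction on $\ell(x)$ following the recursive definition (\mref{eq:rsrec}) of $D_{r,s}$, which reduces the double sum to a comparison between the one-step decompositions of the two orderings.
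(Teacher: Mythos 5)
Your treatment of identities (\mref{eq:lie1}) and (\mref{eq:lie2}) coincides with the paper's: both are instances of general facts about inner derivations (the map $\frakD$ of Eq.~(\mref{eq:indh}) is a Lie algebra homomorphism, and bracketing any derivation against $D_a$ yields the inner derivation attached to the image of $a$). One caution on (\mref{eq:lie2}): with the conventions $D_a(b)=ab-ba$ and $[D_1,D_2]=D_1D_2-D_2D_1$, the surviving terms in your expansion are $-D_{r,s}(p)\,y+y\,D_{r,s}(p)$, i.e.\ $[D_p,D_{r,s}]=-D_{D_{r,s}(p)}$; the paper's own computation arrives at this same minus sign even though the displayed identity omits it, so your hedge ``up to the sign convention'' is warranted but should be pinned down rather than left implicit. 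For (\mref{eq:lie3}) you take a genuinely different route. The paper observes that both sides are derivations, hence agree on all of $\bfk\Gamma$ as soon as they agree on the generating set $V\cup E$; since every operator involved kills $V$, the identity reduces to a three-case check on a single arrow $t$ (namely $t\neq r,p$, then $t=p$, then $t=r$), each case immediate from $D_{r,s}(t)=\delta_{t,r}\,s$. You instead verify the identity on an arbitrary path using the explicit substitution formula of Proposition~\mref{lem:drs} and a double-sum cancellation: substitutions performed at two distinct original positions occur identically in both orderings and cancel in the commutator, while the nested substitutions (an $r\to s$ landing inside an inserted copy of $q$, or a $p\to q$ landing inside an inserted copy of $s$) assemble into $D_{p,D_{r,s}(q)}$ and $D_{r,D_{p,q}(s)}$ via the convention~(\mref{eq:lie40}). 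This is correct --- including in the degenerate cases $r=p$ and $\ell(s)=0$, where the diagonal terms simply do not arise as same-position double substitutions --- but it is considerably heavier, and the index bookkeeping you defer to the end is exactly what the generator-reduction renders unnecessary. The paper's reduction buys a five-line proof; your computation buys an explicit description of how the two substitution operators interact on every path, which is more information than the theorem requires. If you keep your route, state the reduction to $t\in E$ anyway as an alternative, or at least carry out the induction on $\ell(x)$ you sketch in the last sentence, since as written the cancellation is asserted rather than proved.
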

\begin{proof}
Eq.~(\mref{eq:lie1}) follows the fact that the map in Eq.~(\mref{eq:indh}) is a Lie algebra homomorphism.

Next let $r\in E, p,s\in \calp$ with $h(s)=h(r)$ and $t(s)=t(r)$.
Then for $t\in \calp$, we have
\begin{eqnarray*}
[D_p,D_{r,s}](t)&=& D_pD_{r,s}(t) - D_{r,s}D_p(t)\; =\;
D_p(D_{r,s}(t)) - D_{r,s}(pt-tp) \\
&=& pD_{r,s}(t) - D_{r,s}(t)p - \big( D_{r,s}(p)t +pD_{r,s}(t) - (D_{r,s}(t)p+tD_{r,s}(p))\big) \\
&=& -D_{r,s}(p)t + tD_{r,s}(p) \\
&=&-D_{D_{r,s}(p)}(t).
\end{eqnarray*}
This proves Eq.~(\mref{eq:lie2}).

Finally let $r,p\in E$ and $s\parallel r$ and $q\parallel p$. Since both sides of Eq.~(\mref{eq:lie3}) are derivations, by the product rule of derivations, we only need to prove that, for $t\in E$, the following holds
\begin{equation}
[D_{r,s},D_{p,q}](t) = D_{p,D_{r,s}(q)}(t)-D_{r,D_{p,q}(s)}(t).
\mlabel{eq:lie31}
\end{equation}
Let such a $t$ be given. If $t\neq r,p$, then both sides of Eq.~(\mref{eq:lie31}) are zero. If $t=p$, then both sides of Eq.~(\mref{eq:lie31}) equal to $D_{r,s}(q)-D_{p,q}(s)$ if $t=r$ and equal to $D_{r,s}(q)$ if $t\neq r$. If $t=r$, then both sides of Eq.~(\mref{eq:lie31}) equal to $D_{r,s}(q)-D_{p,q}(s)$ if $t=p$ and equal to $-D_{p,q}(s)$ if $t\neq p$. This proves Eq.~(\mref{eq:lie3}).
\end{proof}

 Note that we usually
do not require $D_p$ or $D_r$ to be in $\frakBa$. So $h(p)=t(p)$ or
$h(r)=t(r)$ are allowed. In fact, even when $D_p$ and $D_r$ are in
$\frakBa$, $D_{[r,p]}=D_{rp}-D_{pr}$ might not be in the linear
space $\frakDa$ spanned by $\frakBa$. For example, if $r$ is a path
from a vertex $v_1$ to another vertex $v_2\neq v_1$ and $p$ is a
path from $v_2$ to $v_1$, then $D_r$ and $D_p$ are in $\frakBa$. But
$pr$ and $rp$ are both oriented cycles, so $D_{pr}$ and $D_{rp}$ are
not in $\frakBa$.

\delete{
We now apply Theorem~\mref{thm:lie} to study the Lie algebra of derivations on a path algebra as the quiver varies.

\begin{prop}
Let $\Gamma_1$ be a quiver and let $\Gamma_2$ be a subquiver of $\Gamma_1$. Then there is a canonical embedding of $\diff(\bfk\Gamma_2)$ as a Lie subalgebra of $\diff(\bfk\Gamma_1)$.
\mlabel{pp:sublie}
\end{prop}

\begin{proof}
We let $\frakB^{(1)}$ and $\frakB^{(2)}$ denote the canonical basis of $\diff(\bfk\Gamma_1)$ and $\diff(\bfk\Gamma_2)$ respectively. Then we have a natural embedding of $\frakB^{(2)}$ into $\frakB^{(1)}$ sending an operator $D_s$ for $s\in \calp_{\Gamma_2}$ with $h(s)\neq t(s)$ or a $D_{r,s}$ for $r,s\in \calp_{\Gamma_2}$ with $\ell(r)=1$, $h(r)=h(s)$ and $t(r)=t(s)$ to the operator defined in the same way, but defined on $\bfk\Gamma_1$. Expanding by $\bfk$-linearity, this natural embedding gives a natural embedding of $\diff(\bfk\Gamma_2)$ into $\diff(\bfk\Gamma_1)$. By Theorem~\mref{thm:lie}, the Lie brackets on the two Lie algebras are the same. Thus the latter embedding is a Lie algebra homomorphism, proving the proposition.
\end{proof}
}

\subsection{Ideals and nilpotency}
We next apply Theorem~\mref{thm:lie} to study Lie algebra properties of $\diff(\bfk\Gamma)$.

\begin{prop}
For any non-trivial quiver $\Gamma$, the Lie algebra $\diff(\bfk\Gamma)$ is not nilpotent.
\mlabel{pp:nonnil}
\end{prop}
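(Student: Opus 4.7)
The plan is to prove non-nilpotency by finding a single derivation $D\in\diff(\bfk\Gamma)$ whose adjoint is not a nilpotent operator. Recall that in any nilpotent Lie algebra $L$ with $L^{n+1}=0$ one has $(\mathrm{ad}_x)^n=0$ for every $x\in L$, so it suffices to exhibit such a $D$ together with a canonical basis vector $X\in\frakB$ (in the sense of Theorem~\mref{thm:dbase}) and a nonzero scalar $\lambda\in\bfk$ with $[D,X]=\lambda X$; for then $(\mathrm{ad}_D)^n(X)=\lambda^n X\neq 0$ for all $n\geq 1$.

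Since $\Gamma$ is non-trivial I will fix an arrow $r\in E$ and take $D:=D_{r,r}$. The intuition is that by Proposition~\mref{lem:drs} the operator $D_{r,r}$ acts diagonally on the basis $\calp$ of $\bfk\Gamma$, scaling each path by the number of occurrences of $r$ in it. I expect this ``counting'' behaviour to lift, via the bracket formulas of Theorem~\mref{thm:lie}, to a diagonal action of $\mathrm{ad}_D$ on the canonical basis $\frakB$, and in particular to produce an eigenvector of $\mathrm{ad}_D$ with a nonzero integer eigenvalue. The construction of such an eigenvector will split into two cases according to whether $r$ is a loop.

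If $r$ is not a loop, then $r\in\calpa$, so $D_r\in\frakBa$ is a nonzero basis vector, and Eq.~(\mref{eq:lie2}) with $p=s=r$ should yield $[D_r,D_{r,r}]=D_{D_{r,r}(r)}=D_r$, hence $[D,D_r]=-D_r$. If $r$ is a loop, then $r^2$ is a path parallel to $r$, so $D_{r,r^2}\in\frakBal$ is a nonzero basis vector; using $D_{r,r}(r^2)=2r^2$ from Proposition~\mref{lem:drs} together with $D_{r,r^2}(r)=r^2$, Eq.~(\mref{eq:lie3}) should give $[D_{r,r},D_{r,r^2}]=D_{r,2r^2}-D_{r,r^2}=D_{r,r^2}$. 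In either case $\mathrm{ad}_D$ acquires a nonzero eigenvector with nonzero eigenvalue, which is enough. The only care required is with signs and indices in Theorem~\mref{thm:lie} and the small computation $D_{r,r}(r^2)=2r^2$; I do not anticipate any genuine obstacle, since identifying $D_{r,r}$ as the correct ``counting'' derivation is what makes the argument collapse to a one-line bracket calculation.
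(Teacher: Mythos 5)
Your argument is correct and is essentially the paper's own proof: the paper likewise fixes an arrow $p$, observes via Theorem~\ref{thm:lie} that $\mathrm{ad}\,D_{p,p}(D_{p})=D_{p}$ (up to sign), and concludes that $\mathrm{ad}\,D_{p,p}$ is not nilpotent. Your separate treatment of the loop case is a worthwhile refinement rather than a detour, since for the one-vertex, one-loop quiver the inner derivation $D_r$ vanishes and the paper's eigenvector degenerates, whereas your basis vector $D_{r,r^2}$ with $[D_{r,r},D_{r,r^2}]=D_{r,r^2}$ still exhibits a nonzero eigenvalue.
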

\begin{proof}
By the well-known Engel theorem~\mcite{Hu}, a Lie algebra $\frakg$ is nilpotent if and only if for all its elements $g$, the adjoint derivation
$${\bf ad}\,g:  \frakg\to \frakg, \quad h \mapsto [g,h], \quad h \in \frakg, $$
is nilpotent.
Let $p$ be an arrow in $\Gamma$.
Then we have ${\bf ad}D_{p,p}(D_{p})=D_{p}$ and thus for any natural number $n$, $({\bf ad}D_{p,p})^n(D_{p})=D_{p}$. So ${\bf ad}D_{p,p}$ is not nilpotent.
\end{proof}


\begin{theorem}
Let $\Gamma$ be an acyclic quiver. Let $\frakBa, \frakBal\subseteq
\frakB$ and $\frakDa=\bfk \frakBa$,  $\frakDal=\bfk \frakBal$ be
defined in Theorem~\mref{thm:dbase}. Then,
\begin{enumerate}
\item $\frakDa$ (resp. $\frakDal$) is an ideal (resp. subalgebra) of the Lie algebra
$\diff(\bfk\Gamma)$;
\mlabel{it:ideal}
\item
{\bf (Structure of $\diff(\bfk\Gamma)$)}
$\diff(\bfk\Gamma)$ is a semi-direct sum  of the Lie ideal $\frakDa$
and the Lie subalgebra $\frakDal$, that is,
$$\diff(\bfk\Gamma)=\frakD_1\ltimes\frakD_2;$$ \mlabel{it:sum}
\item
If $\Gamma$ is also a finite quiver, then $\frakDa$ is a nilpotent
Lie algebra. \mlabel{it:nil}
\end{enumerate}
\mlabel{thm:difb}
\end{theorem}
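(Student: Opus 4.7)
The plan is to apply the three bracket identities \mref{eq:lie1}, \mref{eq:lie2}, \mref{eq:lie3} of Theorem~\mref{thm:lie} (the Basis Theorem), together with the key consequence of acyclicity that every nonzero non-vertex path of $\Gamma$ lies in $\calpa$ and that any path parallel to an acyclic path is itself acyclic (since parallelism forces $h(q)=h(p)\neq t(p)=t(q)$).

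For (\mref{it:ideal}), I would verify closure of the bracket on each of the three types of pairs among the canonical generators. Given $D_p, D_r$ with $p,r \in \calpa$, Eq.~(\mref{eq:lie1}) yields $[D_p,D_r] = D_{pr-rp}$; whenever $pr$ or $rp$ is nonzero it is a path of positive length, hence acyclic by hypothesis, so the bracket lies in $\frakDa$. For mixed pairs, Eq.~(\mref{eq:lie2}) gives $[D_p,D_{r,s}] = D_{D_{r,s}(p)}$, and the explicit formula in Proposition~\mref{lem:drs} shows $D_{r,s}(p)$ is a $\bfk$-linear combination of paths parallel to $p$, which are acyclic whenever $p$ is; hence this bracket also sits in $\frakDa$. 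Finally for $[D_{r,s}, D_{p,q}]$, Eq.~(\mref{eq:lie3}) expresses it via the extended notation of \mref{eq:lie40} as a combination of generators $D_{p,\cdot}$ and $D_{r,\cdot}$ with arguments parallel to $p$ and $r$ respectively, giving an element of $\frakDal$. Together these confirm that $\frakDa$ is a Lie ideal and $\frakDal$ is a Lie subalgebra of $\diff(\bfk\Gamma)$.

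Part (\mref{it:sum}) is then immediate from Theorem~\mref{thm:dbase}, which supplies the $\bfk$-vector space decomposition $\diff(\bfk\Gamma) = \frakDa \oplus \frakDal$; combined with (\mref{it:ideal}) this is precisely the semi-direct sum $\diff(\bfk\Gamma)=\frakDa \ltimes \frakDal$.

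For (\mref{it:nil}), I would introduce a length filtration on $\frakDa$. When $\Gamma$ is finite and acyclic, let $N := \max\{\ell(p) : p\in \calp\}$, a finite integer. Set
\[ \frakF^k := \operatorname{span}_{\bfk}\{D_s \mid s \in \calpa,\ \ell(s) \geq k\}, \]
so that $\frakDa = \frakF^1 \supseteq \frakF^2 \supseteq \cdots \supseteq \frakF^{N+1} = \{0\}$. The identity \mref{eq:lie1} together with $\ell(pq) = \ell(p)+\ell(q)$ for nonzero products gives $[\frakF^i, \frakF^j] \subseteq \frakF^{i+j}$. Writing $\frakDa^{[1]} := \frakDa$ and $\frakDa^{[k+1]} := [\frakDa,\frakDa^{[k]}]$ for the lower central series, induction yields $\frakDa^{[k]} \subseteq \frakF^k$ for all $k$, so $\frakDa^{[N+1]} = 0$ and $\frakDa$ is nilpotent. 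The principal obstacle is bookkeeping, namely verifying that parallelism and acyclicity propagate correctly through each of the three bracket identities so that everything remains within $\frakDa$ and $\frakDal$; once this is checked, all three statements follow cleanly.
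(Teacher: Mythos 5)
Your proposal is correct and follows essentially the same route as the paper: closure under the bracket is checked on canonical generators via Eqs.~(\mref{eq:lie1})--(\mref{eq:lie3}) using acyclicity to keep everything in $\calpa$, part (\mref{it:sum}) is read off from Theorem~\mref{thm:dbase}, and nilpotency comes from the fact that brackets add path lengths while lengths in a finite acyclic quiver are bounded. The only cosmetic difference is in (\mref{it:nil}), where you run a length filtration directly on $\frakDa$ instead of (as the paper does) proving $\bfk(\calp\backslash V)$ is nilpotent inside $\bfk\Gamma$ and pushing this through the surjection $\frakD$; both hinge on the same bound $\ell(p)\leq |V|$, which you assert rather than justify but which is immediate.
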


\begin{proof}
(\mref{it:ideal}).
Since $\Gamma$ does not contain any oriented cycles, for any $p,r\in \calpa$, we have $rp\in \calpa$ unless
$rp=0$ and $pr\in\calpa$ unless $pr=0$. Thus by Eq.~(\mref{eq:lie1}), $\frakDa$ is closed under the Lie bracket. Further, for $p\in \calpa$ and $r\in E, s\parallel r$, by the acyclicity of $r$ and the definition of $D_{r,s}$ in Eq.~(\mref{eq:rsrec}), $D_{r,s}(p)=\sum_i c_i q_i$ for $q_i\in \calp\backslash V$. Since $\calp\backslash V=\calpa$ by assumption, we see that
$D_{D_{r,s}(p)}=\sum_i c_i D_{q_i}$ is in $\frakDa$. Thus
$[D_{r,s},D_p]$ is in $\frakDa$ by Eq.~(\mref{eq:lie2}). Since
$\diff(\bfk\Gamma)=\frakDa\oplus\frakDal$ by
Theorem~\mref{thm:dbase}, this proves that $\frakDa$ is an ideal of
$\diff(\bfk\Gamma)$.

By Eq.~(\mref{eq:lie3}), $\frakDal$ is a Lie subalgebra of
$\diff(\bfk\Gamma)$.
\smallskip

\noindent
(\mref{it:sum}). This follows from Item~(\mref{it:ideal}) and Theorem~\mref{thm:dbase}.
\smallskip

\noindent
(\mref{it:nil}).
We first note that the minimal length of $\calp':=\calp\backslash V$ is one. We then note that, for $$\calp^{(2)}:=[\calp',\calp']:=\{[p,q]=pq-qp \,|\, p,q\in \calp'\},$$
the minimal length is two unless $[\calp',\calp']=0$. Let $\frakg:=\bfk (\calp\backslash V)$. Since $\calp^{(2)}\subseteq \frakg$, $\frakg$ is a Lie subalgebra of $\bfk \Gamma$. By an inductive argument, we see that, for the recursively defined
$\calp^{(n+1)}:=[\calp',\calp^{(n)}],$
its minimal length is $n+1$ unless $\calp^{(n+1)}=0$.

On the other hand, by our acyclicity and finiteness assumptions on
$\Gamma$, the lengths of paths in $\Gamma$ is bounded by $|V|$:
suppose there is a path $p$ of length $|V|+1$ with its standard
decomposition $p=v_0p_1v_1\cdots v_{|V|} p_{|V|+1} v_{|V|+1}$. Then
there are $0\leq i<j\leq |V|+1$ such that $v_i=v_j$. This shows that
$p$ contains an oriented cycle, contradicting the acyclicity
assumption.

Combining the above two points, we see that $\calp^{(n)}=0$ for large enough $n$.
Thus $\frakg$ is nilpotent.

Under the acyclic assumption, we have $\calp\backslash V=\calpa$. So
the Lie algebra homomorphism $\cald: \bfk\Gamma \to
\diff(\bfk\Gamma)$ from Eq.~(\mref{eq:indh}) sends the above Lie
algebra $\frakg$ to $\frakD_A=\bfk\frakBa$ surjectively. Thus
$\frakD_A$ is nilpotent.
\end{proof}

We note that when the restriction that $\Gamma$ is acyclic  is removed, the first statement of Theorem~\mref{thm:difb} is no longer true. This is because in Eq.~(\mref{eq:lie1}): $[D_p,D_r] =D_{[p,r]}$, the right hand side might not be in $\frakBa$ even if $D_p$ and $D_r$ are. See the remark and example after Theorem~\mref{thm:lie}.

\subsection{Inner derivations and the canonical basis}

We now express inner derivations in $\indiff\bfk\Gamma$ in terms of the canonical basis.

\begin{prop}
Let $q\in \calp$ be such that $h(q)=t(q)$. Let $v_0=h(q)$. We have
\begin{equation}
D_q= \sum_{p\in E, t(p)=v_0} D_{p,qp} - \sum_{r\in E,h(r)=v_0} D_{r,rq}.
\mlabel{eq:innerq}
\end{equation}
\mlabel{pp:basis2}
\end{prop}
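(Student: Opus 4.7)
The plan is to exhibit both sides of~(\mref{eq:innerq}) as derivations on $\bfk\Gamma$ and then invoke the standard fact that two derivations agreeing on an algebra-generating set must be equal. The left side $D_q$ is an inner derivation, hence a derivation. The right side is a $\bfk$-linear combination of operators of the form $D_{p,qp}$ and $D_{r,rq}$, each of which is a derivation by Theorem~\mref{thm:rsdiff}; before invoking that result I would verify that the subscripts are legitimate, namely that the pairs $(p,qp)$ with $t(p)=v_0$ and $(r,rq)$ with $h(r)=v_0$ satisfy $qp\parallel p$ and $rq\parallel r$. This is immediate from $h(q)=t(q)=v_0$: indeed $t(qp)=t(q)=v_0=t(p)$ and $h(qp)=h(p)$, and similarly $t(rq)=t(r)$ while $h(rq)=h(q)=v_0=h(r)$.

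Since every path in $\calp$ is a product of vertices and arrows, the set $V\cup E$ generates $\bfk\Gamma$ as a $\bfk$-algebra, so by the Leibniz rule it suffices to verify equality of the two sides on $V\cup E$. On a vertex $v\in V$, the hypothesis $h(q)=t(q)=v_0$ forces $D_q(v)=qv-vq=q\,\delta_{v,v_0}-q\,\delta_{v,v_0}=0$, while each $D_{p,qp}$ and $D_{r,rq}$ vanishes on $V$ by~(\mref{eq:rsrec0}). So both sides are zero on $V$.

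The substantive step is evaluation on an arrow $a\in E$. From~(\mref{eq:rsrec}) (or Proposition~\mref{lem:drs} with $k=1$) one reads off $D_{p,s}(a)=s$ if $a=p$ and $D_{p,s}(a)=0$ otherwise. Hence $\sum_{p\in E,\,t(p)=v_0}D_{p,qp}(a)$ equals $qa$ when $t(a)=v_0$ and vanishes otherwise; combined with the fact that $qa=0$ unless $h(q)=v_0=t(a)$, this first sum equals $qa$ in every case. The second sum analogously equals $aq$, and thus the right side of~(\mref{eq:innerq}) applied to $a$ yields $qa-aq=D_q(a)$, completing the verification. I do not anticipate any real obstacle; the one subtlety is the head/tail bookkeeping that forces the products $qa$, $aq$, $qp$, $rq$ to be nonzero precisely on the index sets appearing in~(\mref{eq:innerq}).
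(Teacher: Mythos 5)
Your proposal is correct and follows essentially the same route as the paper: both arguments observe that the two sides are derivations, reduce to checking agreement on the generating set $V\cup E$ via the Leibniz rule, and then perform the same head/tail bookkeeping on vertices and arrows. No gaps.
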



\begin{proof}
Note that both sides of the equation are derivations and $\bfk\Gamma$ is generated by $V\cup E$ as a $\bfk$-algebra. So by the product formula of derivations, we only need to verify that the two sides agree when acting on $V$ and $E$.

For $v\in V$, we have
$$D_q(v)=qv-vq = \left\{\begin{array}{ll} q-q=0, & v= v_0, \\ 0-0=0, & v\neq v_0. \end{array} \right .$$
Also
$$ \sum_{p\in E, t(p)=v_0} D_{p,qp}(v) - \sum_{r\in E,h(r)=v_0} D_{r,rq}(v)=0$$
by the definition of $D_{r,s}$. So we are done in this case.

For $s\in E$, we have
$$ D_q(s)=\left\{\begin{array}{ll} qs-sq, & t(s)=v_0=h(s), \\
qs, & t(s)=v_0\neq h(s), \\
-sq, & t(s)\neq v_0=h(s), \\
0, & t(s)\neq v_0\neq h(s). \end{array}\right .
$$
On the other hand, we have
\begin{eqnarray*}
\sum_{p\in E, t(p)=v_0}D_{p,qp}(s)&=&\left\{\begin{array}{ll}
qs, & s=p \text{ for some } p\in E \text{ with }t(p)=v_0, \\ 0, & \text{otherwise}. \end{array} \right.\\
&=&\left\{\begin{array}{ll}
qs, & t(s)=v_0, \\ 0, & t(s)\neq v_0. \end{array} \right . \\
-\sum_{r\in E, h(r)=v_0}D_{r,sq}(s)&=&\left\{\begin{array}{ll}
-sq, & s=r \text{ for some } r\in E \text{ with }h(r)=v_0, \\ 0, & \text{otherwise}. \end{array} \right.\\
&=& \left \{ \begin{array}{ll}
-sq, & h(s)=v_0, \\ 0, & h(s)\neq v_0. \end{array} \right. 
\end{eqnarray*}
Thus the actions of the two sides of Eq.~(\mref{eq:innerq}) on $E$ agree.
\end{proof}

\section{Combinatorial derivations and their relations}
\mlabel{sec:comb}
In this section, we study combinatorial derivations on a path algebra $\bfk\Gamma$, namely derivations from the combinatorial objects of vertices, edges and faces of $\Gamma$. We define various relation matrices, study their ranks and obtain dimensional formulas of these derivations. These dimensional formula will be applied in the next section to give a strengthened form of Euler's polyhedron formula and to determine the structure of the Lie algebra $HH^1(\bfk\Gamma)$.

\subsection{Combinatorial derivations and their relation matrices}
We will consider a {\bf quiver  $\Gamma$ of genus $g$} which is
defined to be a quiver together with a fixed embedding of $\Gamma$
into a surface  $S$ of genus $g$ such that $g$ is smallest. Such a quiver is called a {\bf topological quiver}.
A quiver $\Gamma$ is
called {\bf connected} if the underlying set of $\Gamma$ is
connected.
The set $F$ of {\bf faces} of $\Gamma$ is the set of connected components of $S\backslash \Gamma$, or more precisely the complement of the underlying set of $\Gamma$ in $S$.

When the genus $g$ is zero, we can take the surface $S$ to be the
Riemann sphere, through the stereographic projection from
the Riemann sphere to $\RR^2$.

\subsubsection{Combinatorial derivations from a quiver}
We put together various combinatorially defined derivations on the path algebra of a quiver $\Gamma=(V,E)$. We recall the following notations.
\begin{enumerate}
\item
For $v\in V$, we call $D_v$ a {\bf vertex derivation} and let $\frakD_V$ denote the linear space spanned by $\{D_v\,|\,v\in V\}$, called the {\bf space of vertex derivations}.
\item
For $p\in E$, we call $D_{p,p}$ an {\bf edge derivation} and let $\frakD_E$ denote the linear space spanned by $\{D_{p,p}\,|\,p\in E\}$, called the {\bf space of edge derivations}.
\end{enumerate}

For a face $f\in F$, let $\pcyc_f$ be the boundary of $f$. It is an unoriented cycle of $\Gamma$ consisting of arrows that are not necessarily in one direction, called a {\bf primitive cycle} of $\Gamma$. Thus the set $C:=C_\Gamma$  of primitive cycles of $\Gamma$ is in bijection with the set $F$ of faces of $\Gamma$. For a primitive cycle $\pcyc=\pcyc_f$ of a face $f$, we will define a {\bf face derivation} $D_\pcyc:=D_{\pcyc_f}$ on $\bfk\Gamma$. First define
\begin{equation}
D_{\pcyc}(v)=0, v\in V;\quad
D_{\pcyc}(p)=\left\{\begin{array}{ll} p, & p \text{ is clockwise on } \pcyc,\\
-p, & p \text{ is counterclockwise on }\pcyc,\\
0, & p \text{ is not on } \pcyc.
\end{array} \right . \ p\in E.
\notag 
\end{equation}
Here being clockwise or counterclockwise is viewed from inside the face for the primitive cycle.
We then expand $D_{\pcyc}$ to $\bfk\Gamma$
by the product rule, noting that $\bfk\Gamma$ is the algebra generated by $V\cup E$.

If a cycle is shared by two faces, such as the quiver of one oriented loop, there will be two face derivations from the two faces.
Also, if $p$ is an edge in the interior of $\pcyc$, then
$D_{\pcyc}(p)=0$.
The name face derivation is justified by the following alternative description of $D_\pcyc$.
\begin{lemma}
Let a primitive cycle $\pcyc\in C_\Gamma$ be comprised of an
ordered list of arrows $p_{1},\cdots, p_{s}\in E$. Then
\begin{equation}
D_{\pcyc}= \pm D_{p_1,p_1}\pm \cdots \pm D_{p_s,p_s},
\mlabel{eq:primd}
\end{equation}
where a $\pm D_{p_i,p_i}$ is $D_{p_i,p_i}$ if $p_i$ is in clockwise direction when viewed from
 the interior of the face of $\pcyc$ and is $-D_{p_i,p_i}$
 otherwise.
In particular, $D_{\pcyc}$ is a derivation.
\mlabel{lem:fdiff}
\end{lemma}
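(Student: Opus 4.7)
The plan is to interpret the statement as an equality of linear operators on $\bfk\Gamma$, where the right-hand side serves as the rigorous definition while the left-hand side gives the combinatorial picture. The main point is that the ``extension by the product rule'' used to define $D_{\pcyc}$ on all of $\bfk\Gamma$ is automatically justified once we show $D_{\pcyc}$ coincides with a sum of already-known derivations.

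First I would observe that the right-hand side $R := \sum_{i=1}^s \varepsilon_i D_{p_i,p_i}$, with $\varepsilon_i \in \{\pm 1\}$ chosen according to the direction of $p_i$ on $\pcyc$, is a $\bfk$-linear combination of derivations by Theorem~\mref{thm:rsdiff}, hence a derivation. So $R \in \diff(\bfk\Gamma)$ automatically.

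Next, by Lemma~\mref{lem:base}, any derivation on $\bfk\Gamma$ is determined by its values on the basis $\calp$, and in fact, since $\bfk\Gamma$ is generated as a $\bfk$-algebra by $V \cup E$, two derivations agreeing on $V \cup E$ must coincide. Therefore it suffices to compare $D_{\pcyc}$ and $R$ on $V \cup E$. On a vertex $v \in V$: the combinatorial definition gives $D_{\pcyc}(v) = 0$, while Eq.~(\mref{eq:rsrec0}) gives $D_{p_i,p_i}(v) = 0$ for every $i$, so $R(v) = 0$. On an arrow $p \in E$: using the explicit formula of Proposition~\mref{lem:drs} applied to the length-one path $p$, one sees that $D_{p_i,p_i}(p) = p$ when $p = p_i$ and $0$ otherwise. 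Hence, if $p$ is not on $\pcyc$ then $R(p) = 0 = D_{\pcyc}(p)$, and if $p = p_i$ lies on $\pcyc$ then $R(p) = \varepsilon_i p$, which matches $D_{\pcyc}(p)$ by the choice of signs.

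Having verified agreement on the generating set $V \cup E$, we conclude $R = D_{\pcyc}$ as linear operators on $\bfk\Gamma$, which gives Eq.~(\mref{eq:primd}). In particular, $D_{\pcyc}$ is a derivation since it equals the derivation $R$. The only mildly delicate point is to be careful that the combinatorial ``expand by the product rule'' clause in the definition of $D_{\pcyc}$ is a priori ambiguous; this potential obstacle dissolves once we realize the identification with $R$ \emph{is} the consistency check, because $R$ has an unambiguous meaning as a sum of derivations defined in Section~\mref{sec:str}.
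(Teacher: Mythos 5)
Your proposal is correct and follows essentially the same route as the paper, which likewise reduces the claim to checking that the two operators agree on the generating set $V\cup E$ and that the right-hand side is a derivation as a linear combination of the $D_{p_i,p_i}$. Your added remark that the identification with $R$ simultaneously resolves the a priori ambiguity of the ``extend by the product rule'' clause is a welcome clarification, but not a different method.
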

\begin{proof}
We only need to check that the two operators agrees on $V\cup E$.
But this is clear from the definition of $D_{\pcyc}$.
\end{proof}

Let $\frakD_F$ denote the linear span of
$\{D_{\pcyc_f}\,|\,f\in F\}$, called the {\bf space of face derivations}.

Since the concept of a face derivation will be important
for the rest of the paper, we make the following remarks and
illustrate their contents by the following quiver $\Gamma$.
\begin{equation}
\xymatrix{ && \bullet \ar^{p_1}[ddll] \ar^{p_2}[ddrr] \ar^{p_4}[d] && \\
&& \bullet && \\
\bullet \ar^{p_3}[rrrr] &&&& \bullet \\
&& \bullet \ar^{p_5}[rru]
}
\mlabel{eq:eg}
\end{equation}
The quiver $\Gamma$ has two primitive cycles: the cycle $\pcyc_1$ of the finite face of $\Gamma$ and the cycle $\pcyc_0$ of the infinite face of $\Gamma$.
\begin{remark}
{\rm
\begin{enumerate}
\item In the case a quiver $\Gamma$ is planar, for the boundary $\pcyc_0$ of the infinite face, an arrow on $\pcyc_0$ is in clockwise direction when viewed from the interior of the infinite face means that the arrow is in counterclockwise direction when viewed from the interior of the quiver.
For the quiver in the diagram~(\mref{eq:eg}), the arrow $p_2$ is
clockwise for the primitive cycle from the finite face, but is
counterclockwise for the primitive cycle from the infinite face.
\mlabel{it:inf}
\item If an arrow $p$ is on the boundary of two faces of $\Gamma$, then $p$ will be in the clockwise direction on one boundary and in the counterclockwise direction on the other. Thus $D_{p,p}$ will have a plus sign in $D_\pcyc$ for the primitive cycle $\pcyc$ of one boundary and will have a minus sign in $D_\pcyc$ for the other.
For our example of $\Gamma$, $D_{p_1,p_1}$ has a minus sign in $\pcyc_1$ and a plus sign in $\pcyc_0$.
\mlabel{it:p2}
\item If an arrow $p$ is not on the boundary of two faces of $\Gamma$, then both sides of the arrow are in the same face of $\Gamma$. In other words, $p$ will appear twice in $D_\pcyc$ for the primitive cycle $\pcyc$ of this face, once in the clockwise direction, once in the counterclockwise direct. As a results, $D_{p,p}$ will appear exactly twice in $D_\pcyc$, once with a positive sign and once with a negative sign. Consequently, there will be no contribution of $D_{p,p}$ in $D_\pcyc$.
For our example of $\Gamma$, both sides of $p_4$ are in the finite face of $\Gamma$. The primitive cycle $\pcyc_1$ of this face gives
$$D_{\pcyc_1}=-D_{p_1,p_1}+D_{p_4,p_4}-D_{p_4,p_4} +D_{p_2,p_2}-D_{p_3,p_3}=-D_{p_1,p_1} +D_{p_2,p_2}-D_{p_3,p_3}.$$
So $D_{p_4,p_4}$ does not contribute to $D_{\pcyc_1}$. Likewise, $D_{p_5,p_5}$ does not contribute to $D_{\pcyc_0}$ from the infinite face.
\mlabel{it:p1}
\item The previous remark applies in particular when a quiver contains only a unique primitive
cycle $\pcyc_0$ (in the case  the quiver is planar, the unique
primitive cycle is just the boundary of  the infinite face, which is
not proper). This is because for such a quiver, no arrow can appear
on the boundary of two faces. Thus for such a quiver, we have
$D_{\pcyc_0}=0$. \mlabel{it:ponly}
\item By Remark~\mref{rk:cycle}.(\mref{it:p2}) and (\mref{it:p1}), each arrow $p\in E$ will appear in exactly one $D_\pcyc$ with a plus sign and in exactly another (or the same) $D_\pcyc$ with a minus sign. Consequently, we have
\begin{equation}\sum_{i=1}^{\gamma_2}D_{\pcyc_i}=0.
\mlabel{eq:zero}
\end{equation}
\mlabel{it:ppm}
\item For an oriented cycle $q$ of $\Gamma$, let $\pcyc$ be the corresponding primitive cycle. Then the $D_\pcyc$ defined here is different with $D_q$ given in Eq.~(\mref{eq:innerq}).
\end{enumerate}
}
\mlabel{rk:cycle}
\end{remark}

\subsubsection{Relation matrices}

Denote $\gamma_0=|V|$, $\gamma_1=|E|$, $\gamma_2=|F|=|C|$. They are all finite since $\Gamma$
is finite. With these notations, we will use the following enumerations of sets.
\begin{equation}
V=\{v_i\ |\ 1\leq i\leq \gamma_0\},\quad E=\{p_k\ |\ 1\leq k\leq
\gamma_1\},\quad C=\{\pcyc_j\ |\ 1\leq j\leq
\gamma_2\}.
\mlabel{eq:list}
\end{equation}
By Eq.~(\mref{eq:innerq}) and Eq.~(\mref{eq:primd}), we have the
following system of linear relations.

\begin{eqnarray}
D_{v_i}&=& \sum_{k=1}^{\gamma_1} c_{i,k} D_{p_k,p_k}=\sum_{p\in E,t(p)=v_i} D_{p,p} - \sum_{r\in E, h(r)=v_i} D_{r,r}, \quad 1\leq i\leq
\gamma_0, \mlabel{eq:vp}
\\
D_{\pcyc_j}&=& \sum_{k=1}^{\gamma_1} c_{\gamma_0+1+j, k}
D_{p_k,p_k}= \sum_{k=1}^{\gamma_1} d_{j,k} D_{p_k,p_k}, \quad 1\leq
j\leq \gamma_2.
\mlabel{eq:pp}
\end{eqnarray}
where all $d_{j,k}=\pm 1$, or\; $=0$. Thus by
Lemma~\mref{lem:fdiff}, we find that $\frakD_F$ is a subspace of
$\frakD_E$.

\begin{defn}
{\rm
\begin{enumerate}
\item
Define the {\bf (differential) vertex-arrow matrix} $C_{va}$ of
$\Gamma$ to be the coefficient matrix of the linear system in Eq.~(\mref{eq:vp}).
\item
Define the {\bf (differential) cycle-arrow matrix} $C_{ca}$ of
$\Gamma$ to be the coefficient matrix of the linear system in Eq.~(\mref{eq:pp}).
\item
Define the {\bf \conmat} of $\Gamma$ to be the
coefficient matrix $C_\Gamma$ of the combined linear system in
Eq.~(\mref{eq:vp}) and Eq.~(\mref{eq:pp}), that is, the $(\gamma_0+\gamma_2)\times \gamma_1$-matrix
$\left[\begin{array}{c}C_{VP}\\C_{CP} \end{array}\right ]$.
\item
Define the {\bf \bounmat} of a quiver $\Gamma$ is the $\gamma_2\times \gamma_2$ matrix $B_\Gamma=[e_{j,r}]_{0\leq j,r\leq \gamma_2-1}$ in which
$e_{j,j}$ is the number of arrows on $\pcyc_j$ that are also on $\pcyc_r$ for some $r\neq j$, and $-e_{j,r}$ for $r\neq j$ is the number of arrows on $\pcyc_j$ that are also on $\pcyc_r$.
\end{enumerate}
}
\mlabel{de:boundary}
\end{defn}

\begin{remark}
{\rm
\begin{enumerate}
\item
The matrix $C_{va}$ encodes the relationship
between the vertex derivations $D_{v_i},$ $1\leq i\leq \gamma_0$ and
the edge derivations $D_{p_j,p_j}, 1\leq j\leq \gamma_1$. Since the edge derivations $D_{p_k,p_k}, 1\leq k\leq \gamma_1,$ are linearly independent the rank of the row space of $C_{va}$ is $\dim\frakDv$.
\item
The matrix $C_{ca}$
encodes the relationship between the primitive cycle derivations
$D_{\pcyc_k}, 0\leq k\leq \gamma_2-1$ and the arrow derivations
$D_{p_j,p_j}, 1\leq j\leq \gamma_1$. The rank of the row space of $C_{ca}$ is just $\dim\frakD_F$.
\item
By the definition of $e_{j,j}$ and $e_{j,r}$ for $r\not=j$, the
matrix $B_\Gamma$ is independent of the direction of the quiver $\Gamma$ and
only depend on the underlying graph.
\end{enumerate}
}
\end{remark}

In order to study the rank of
the \bounmat, we introduce preparatory concepts and results of
matrices.

\begin{defn}
{\rm
\begin{enumerate}
\item
A square matrix $M$ over a number field is called
{\bf irreducible} if $M$ cannot be
written as a block matrix $M=\left[ \begin{array}{cc} M_1 & O\\
M_3 & M_2\end{array} \right]$ where $O$ is a zero matrix and $M_1$
and $M_2$ are both square matrices.
\item
An $n\times n$-matrix $M=[m_{i,j}]$ is called {\bf
weakly diagonally dominant} if
$|m_{i,i}|\geq\sum^n_{j=1,j\not=i}|m_{i,j}|$
 for
$i=1,\cdots,n,$ with strict inequality for at least one $i$.
\end{enumerate}
}
\end{defn}

\begin{theorem}$($\cite[Section 10.7]{LT}$)$ Let $M=[m_{ij}]$ be an $n\times n$ irreducible matrix over a number field. If
$M$ is weakly diagonally dominant,
 then $M$ is invertible.
\mlabel{thm:lt}
\end{theorem}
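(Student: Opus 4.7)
The plan is to prove this by contradiction, suppose $M$ is singular and choose a nonzero vector $x=(x_1,\ldots,x_n)^T$ in the kernel. Normalize so that $\max_i |x_i|=1$, and set $S=\{\,i\mid |x_i|=1\,\}$. For any $i\in S$, the equation $Mx=0$ gives $m_{ii}x_i=-\sum_{j\neq i}m_{ij}x_j$, hence
\begin{equation}
|m_{ii}|=\Big|\sum_{j\neq i}m_{ij}x_j\Big|\leq\sum_{j\neq i}|m_{ij}|\,|x_j|\leq\sum_{j\neq i}|m_{ij}|.
\notag
\end{equation}
Combined with weak diagonal dominance $|m_{ii}|\geq\sum_{j\neq i}|m_{ij}|$, equality must hold throughout, which forces $|x_j|=1$ whenever $m_{ij}\neq 0$. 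In other words, $S$ is closed under the relation $i\to j$ defined by $m_{ij}\neq 0$.

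Next I would use irreducibility to upgrade this closure property to $S=\{1,\ldots,n\}$. The standard characterization of an irreducible matrix is that for any nontrivial partition $\{1,\ldots,n\}=S\sqcup S^c$ there must exist $i\in S$ and $j\in S^c$ with $m_{ij}\neq 0$ (otherwise a block permutation puts $M$ in the forbidden lower-triangular block form). If $S^c\neq\emptyset$, this would supply an $i\in S$ and $j\in S^c$ with $m_{ij}\neq 0$, forcing $|x_j|=1$ by the preceding paragraph, a contradiction. Hence $S=\{1,\ldots,n\}$, i.e.\ $|x_i|=1$ for every $i$.

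Finally I would invoke the hypothesis that strict inequality $|m_{i_0 i_0}|>\sum_{j\neq i_0}|m_{i_0 j}|$ holds at some index $i_0$. Applying the first displayed inequality at $i=i_0$ (which is in $S$) gives
\begin{equation}
|m_{i_0 i_0}|=\Big|\sum_{j\neq i_0}m_{i_0 j}x_j\Big|\leq\sum_{j\neq i_0}|m_{i_0 j}|\,|x_j|=\sum_{j\neq i_0}|m_{i_0 j}|<|m_{i_0 i_0}|,
\notag
\end{equation}
which is absurd. Therefore no nonzero $x$ can lie in the kernel, so $M$ is invertible.

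The main obstacle is the middle step, translating the combinatorial notion of irreducibility (a block condition on the matrix) into the graph-theoretic statement that any nonempty proper subset $S$ admits an index $i\in S$ with $m_{ij}\neq 0$ for some $j\notin S$. This equivalence, while standard, is the crux that lets the local maximum-modulus argument propagate globally; the rest of the proof is a direct modulus estimate.
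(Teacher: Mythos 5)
The paper gives no proof of this theorem at all: it is quoted from \cite[Section 10.7]{LT} and used as a black box in the proof of Lemma~\ref{lem:rk}, so your argument is supplying a proof the paper omits rather than paralleling one. The proof you give is correct; it is the classical Taussky strengthening of the L\'evy--Desplanques/Ger\v{s}gorin criterion. The three steps --- the modulus estimate at an index where $|x_i|$ is maximal, the equality-case analysis forcing $|x_j|=1$ whenever $m_{ij}\neq 0$, and the use of irreducibility to propagate this to all indices before invoking the single row with strict dominance --- are exactly the standard ones, and each is executed correctly (absolute values make sense because a number field embeds in $\CC$, and invertibility over the field is detected by the determinant, so nothing is lost in that reduction; the case $n=1$ also goes through). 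One point you should make explicit rather than leave as ``standard'': the paper's definition of irreducibility literally prohibits only the block form in the \emph{given} ordering, whereas your characterization (every nonempty proper subset $S$ admits $i\in S$ and $j\notin S$ with $m_{ij}\neq 0$) is equivalent to non-block-triangularizability \emph{up to simultaneous permutation of rows and columns}. Under the literal reading the theorem is false --- a singular, weakly diagonally dominant matrix that is permutation-similar but not equal to a block lower-triangular matrix, such as $\left[\begin{smallmatrix}1&0&1\\0&2&1\\0&0&0\end{smallmatrix}\right]$, would count as irreducible --- so your tacit use of the permutation-invariant definition is not merely convenient but necessary, and is the intended reading of \cite{LT}.
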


Using of this fact, we prove the following
\begin{lemma}
Let $M=[m_{ij}]$ be an $n\times n$ irreducible matrix with entries
in a number field. If, for each $i=1,\cdots,n$,
$\sum_{j=1}^{n}m_{i,j}=0$, $m_{i,i}>0$ and $m_{i,j}\leq 0$ for
$j\not=i$, then the rank of $M$ is $n-1$. \mlabel{lem:rk}
\end{lemma}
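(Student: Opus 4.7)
My plan is to establish the two inequalities $\mathrm{rank}(M)\leq n-1$ and $\mathrm{rank}(M)\geq n-1$ separately. The upper bound is immediate: the row-sum hypothesis gives $M\mathbf{1}=0$ where $\mathbf{1}:=(1,\ldots,1)^t$, so $\mathbf{1}\in\ker M$ and hence $\mathrm{rank}(M)\leq n-1$.

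For the lower bound, I plan to exhibit an invertible $(n-1)\times(n-1)$ principal submatrix via Theorem~\mref{thm:lt}. Fix any index $k$ and let $M_k$ denote the $(n-1)\times(n-1)$ matrix obtained from $M$ by deleting the $k$-th row and column. For any $i\neq k$, the identity $m_{i,i}=\sum_{j\neq i}|m_{i,j}|$ gives
\[
|[M_k]_{i,i}|-\!\!\sum_{\substack{j\neq i\\ j\neq k}}\!|[M_k]_{i,j}|\;=\;|m_{i,k}|\;\geq\;0,
\]
so $M_k$ is weakly diagonally dominant, and strictly so at exactly those rows $i$ with $m_{i,k}\neq 0$. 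Applying irreducibility of $M$ to the partition $\{k\}\sqcup(\{1,\ldots,n\}\setminus\{k\})$ produces such an $i$. Thus when $M_k$ is itself irreducible, Theorem~\mref{thm:lt} immediately gives $\det M_k\neq 0$ and we are done.

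If $M_k$ is reducible, I would put it into its block lower-triangular normal form with irreducible diagonal blocks $A_1,\ldots,A_r$ indexed by subsets $T_1,\ldots,T_r$ partitioning $\{1,\ldots,n\}\setminus\{k\}$, so that $\det M_k=\prod_b \det A_b$. For any $i\in T_b$, the block structure forces $m_{i,j}=0$ whenever $j\in T_c$ with $c>b$, so the diagonal-dominance defect of the $i$-th row of $A_b$ reduces to
\[
|m_{i,k}|+\sum_{a<b}\sum_{j\in T_a}|m_{i,j}|\;\geq\;0.
\]
Now applying irreducibility of $M$ to the partition $T_b$ versus its complement in $\{1,\ldots,n\}$ produces some $i\in T_b$ with a nonzero edge $m_{i,j}$ exiting $T_b$; the block-triangular structure of $M_k$ forces such a $j$ to lie in $\{k\}\cup\bigcup_{a<b}T_a$, making the defect strictly positive at this $i$. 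Hence each $A_b$ is irreducible and weakly diagonally dominant, so Theorem~\mref{thm:lt} gives $\det A_b\neq 0$ for all $b$ and therefore $\det M_k\neq 0$.

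The main obstacle is the reducible case above: one must carefully track how the strong connectivity encoded by the irreducibility of $M$ propagates into strict diagonal dominance inside each irreducible diagonal block of $M_k$, with every "escape" edge from a block $T_b$ being forced to land either in a strictly earlier block or at the removed vertex $k$. Both the reducible and irreducible sub-cases rely on the same device: applying the irreducibility hypothesis on $M$ to carefully chosen partitions of $\{1,\ldots,n\}$ to guarantee the nonzero off-block (resp.\ off-$k$-column) entries required to upgrade weak diagonal dominance to strict.
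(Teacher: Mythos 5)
Your proof is correct, and at its core it follows the same route as the paper: the zero row sums give $M\mathbf{1}=0$ and hence $\mathrm{rank}(M)\leq n-1$, and the lower bound comes from exhibiting a nonsingular $(n-1)\times(n-1)$ principal submatrix by combining the identity $m_{i,i}=\sum_{j\neq i}|m_{i,j}|$ with Theorem~\ref{thm:lt}. The genuine difference is in the lower bound: the paper deletes the last row and column, uses irreducibility of $M$ to find some $m_{i_0,n}\neq 0$ giving strict dominance in row $i_0$, and then invokes Theorem~\ref{thm:lt} on $M_1$ \emph{without verifying that $M_1$ is irreducible} --- a hypothesis that theorem genuinely needs, since a reducible weakly diagonally dominant matrix can be singular, and a principal submatrix of an irreducible matrix need not be irreducible (delete the center of a star-graph Laplacian and you are left with a diagonal matrix). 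Your treatment of the reducible case --- passing to the Frobenius block lower-triangular form of $M_k$, and for each diagonal block $T_b$ using irreducibility of $M$ to produce an exit edge that the triangular structure forces into $\{k\}\cup\bigcup_{a<b}T_a$, hence strict dominance in some row of that block --- closes exactly this gap, so your argument is a complete proof where the paper's covers only the case that $M_k$ happens to be irreducible. Two small points to make explicit if you write this up: ``irreducible'' must be read in the usual permutation-invariant sense (no simultaneous row/column permutation brings the matrix to block-triangular form), which is what both you and the paper implicitly use when applying it to arbitrary index subsets; and the $1\times 1$ diagonal blocks are covered by your argument because strict dominance there simply says the single entry is nonzero.
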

\begin{proof}
Since $\sum_{j=1}^{n}m_{i,j}=0$ for each $i=1,\cdots,n$, the sum of the $n$ column vectors of $M$ is zero. Thus the rank of $M$ is less than or equal to $n-1$.

On the other hand,  consider the $(n-1)\times (n-1)$-submatrix $M_1$
consisting of the first $n-1$ rows and columns of $M$, that is $M_1=[m_{i,j}]_{1\leq i,j\leq n-1}$.
Since $\sum_{j=1}^{n}m_{i,j}=0$ for each $i=1,\cdots,n-1$, we have
$$m_{i,i}=-\sum^n_{j=1,j\not=i}m_{i,j} =-\sum^{n-1}_{j=1,j\not=i}m_{i,j}-m_{i,n} \geq -\sum^{n-1}_{j=1,j\not=i}m_{i,j} =\sum^{n-1}_{j=1,j\not=i}|m_{i,j}|.$$

Since $M$ is irreducible and symmetric, there is at least one
$m_{i_0,n}\neq 0$ in the last column of $M$ other than $m_{n,n}$.
Otherwise we would have $M=\left[\begin{array}{cc} M_1 & \vec{0} \\
M_0 & m_{n,n} \end{array} \right ] $ for the zero vector $\vec{0}$,
contradicting the irreducibility condition on $M$. Then, since
$m_{i,j}\leq 0$ for $i\not= j$ by assumption, we have $m_{i_0,n}<
0$. Therefore,
$$m_{i_0,i_0} =-\sum^{n-1}_{j=1,j\not=i_0}m_{i_0,j} -m_{i_0,n}>
\sum^{n-1}_{j=1,j\not=i_0}|m_{i_0,j}|.$$ Thus,  $M_1$ is a weakly
diagonally dominant matrix. Then by Theorem~\mref{thm:lt}, $M_1$ is
invertible. Together with the observation made at the beginning of
the proof, we conclude that $rk(M)=n-1$.
\end{proof}

\subsubsection{Ranks of relation matrices}

\begin{theorem}
\begin{enumerate}
\item
The vertex-arrow matrix $C_{va}$ has rank $\gamma_0-1$.
\mlabel{it:a}
\item
The cycle-arrow matrix $C_{ca}$ has rank $\gamma_2-1$.
\mlabel{it:b}
\item
Let $\Gamma$ be a connected finite quiver and suppose that the
ground field $\bfk$ has characteristic $0$. Then the rank of the
\bounmat $B_\Gamma$ is $\gamma_2-1$.
\mlabel{it:c}
\end{enumerate}
\mlabel{thm:rank}
\end{theorem}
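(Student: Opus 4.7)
The plan for (a) is to recognize $C_{va}$ as the signed vertex-arrow incidence matrix of the underlying multigraph of $\Gamma$. By Proposition~\mref{pp:basis2} applied to $q = v_i$, the $(i,k)$-entry of $C_{va}$ is $+1$ when $t(p_k) = v_i \neq h(p_k)$, $-1$ when $h(p_k) = v_i \neq t(p_k)$, and $0$ otherwise (including when $p_k$ is a loop at $v_i$). Each column therefore sums to $0$, so the all-ones vector lies in the left kernel and $\mathrm{rank}(C_{va}) \leq \gamma_0 - 1$. For the reverse inequality, suppose $\sum_i \alpha_i R_i = 0$ for the rows $R_i$ of $C_{va}$; reading off column $p_k$ for any non-loop $p_k$ forces $\alpha_{t(p_k)} = \alpha_{h(p_k)}$. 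By the connectedness of $\Gamma$ as an undirected graph, all $\alpha_i$ coincide, so the left kernel is one-dimensional and $\mathrm{rank}(C_{va}) = \gamma_0 - 1$.

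For (b), Remark~\mref{rk:cycle}(\mref{it:ppm}) gives $\sum_j D_{\pcyc_j} = 0$, so $\mathrm{rank}(C_{ca}) \leq \gamma_2 - 1$. For the lower bound I would analyze a relation $\sum_j \alpha_j D_{\pcyc_j} = 0$ via Lemma~\mref{lem:fdiff}: an arrow $p_k$ shared between distinct faces $\pcyc_j$ and $\pcyc_r$ appears with opposite signs in $D_{\pcyc_j}$ and $D_{\pcyc_r}$, contributing $\pm(\alpha_j - \alpha_r)$ to the coefficient of $D_{p_k,p_k}$, while arrows with both sides in a single face contribute $0$. Since $\{D_{p_k,p_k}\}_{k=1}^{\gamma_1}$ is linearly independent in $\frakD_E$, this forces $\alpha_j = \alpha_r$ for every pair of faces sharing an arrow. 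The face-adjacency (dual) graph is connected because $\Gamma$ is connected on a connected surface, so all $\alpha_j$ are equal and $\mathrm{rank}(C_{ca}) = \gamma_2 - 1$.

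For (c), when $\gamma_2 = 1$ the matrix $B_\Gamma$ is $1 \times 1$ zero and has rank $0 = \gamma_2 - 1$. Assuming $\gamma_2 \geq 2$, I plan to apply Lemma~\mref{lem:rk} to $B_\Gamma$. Symmetry holds because an arrow lying on both $\pcyc_j$ and $\pcyc_r$ contributes symmetrically to $e_{j,r}$ and $e_{r,j}$. The row-sum condition $\sum_r e_{j,r} = 0$, together with $e_{j,r} \leq 0$ for $r \neq j$, is immediate from the definition $e_{j,j} = \sum_{r \neq j}|e_{j,r}|$. Strict positivity of each $e_{j,j}$ follows from the observation that a face whose boundary arrows are all interior to itself would force every arrow of $\Gamma$ to lie in its closure, hence $\gamma_2 = 1$; so under the assumption $\gamma_2 \geq 2$, each face must share at least one arrow with some other face. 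Irreducibility of $B_\Gamma$ reduces to connectedness of the face-adjacency graph (the graph on $\{1, \dots, \gamma_2\}$ with $j \sim r$ iff $e_{j,r} \neq 0$), again a consequence of the topological connectedness of $\Gamma$ on a connected surface. The characteristic-zero hypothesis ensures that the positive integer diagonal entries remain nonzero in $\bfk$, so Lemma~\mref{lem:rk} applies and gives $\mathrm{rank}(B_\Gamma) = \gamma_2 - 1$.

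The main obstacle will be the two topological assertions invoked in (b) and (c): connectedness of the face-adjacency graph for a connected quiver on a connected surface, and the claim that under $\gamma_2 \geq 2$ no face can have all its boundary arrows interior to itself. Both are standard facts in topological graph theory but require care with degenerate configurations such as loops, multi-edges, and tree appendages dangling into a face. Once these are established, the rank computation in (b) is a direct dimension count, and (c) becomes a clean application of Lemma~\mref{lem:rk}, which already packages the remaining linear-algebra input.
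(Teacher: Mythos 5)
Your proposal is correct, and for parts (\mref{it:a}) and (\mref{it:b}) it takes a genuinely different route from the paper. The paper proves (\mref{it:a}) by induction on $\gamma_0$: it deletes a vertex together with its attached arrows, puts $C_{va}$ into block form, and exhibits a nonsingular $(\gamma_0-1)\times(\gamma_0-1)$ submatrix; similarly it proves (\mref{it:b}) by induction on $\gamma_2$, deleting the arrows shared by two primitive cycles and amalgamating the two faces into one. You instead compute the left kernel directly: for (\mref{it:a}) this is the classical fact that the signed vertex-arrow incidence matrix of a connected multigraph has corank one, and for (\mref{it:b}) the analogous statement for the face--arrow incidence, with connectedness of the face-adjacency (dual) graph replacing connectedness of $\Gamma$. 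This is shorter and avoids the somewhat delicate surgery in the paper's inductive steps (reordering arrows, block decompositions, amalgamation of cycles), at the price of invoking connectedness of the dual graph --- but the paper relies on exactly the same topological fact, with no further justification, when it asserts in the proof of (\mref{it:c}) that $B_\Gamma$ is irreducible because $\Gamma$ is connected, so you are not assuming anything beyond what the paper does, and you correctly flag it as the point needing care. Your part (\mref{it:c}) coincides with the paper's: both reduce to Lemma~\mref{lem:rk} after checking symmetry, the sign conditions, zero row sums, irreducibility, and $e_{j,j}>0$, the last two again coming down to dual-graph connectedness when $\gamma_2\geq 2$. One small imprecision: characteristic zero is used not merely to keep the diagonal entries nonzero, but to ensure that the rank of the integer matrix $B_\Gamma$ over $\bfk$ agrees with its rank over $\QQ$, which is where Theorem~\mref{thm:lt} is applied.
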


\begin{proof}
(\mref{it:a})
Since $e=\sum_{i=1}^{\gamma_0} v_i$ is the identity of $\bfk\Gamma$,
we have
\begin{equation}
D_e=\sum_{i=1}^{\gamma_0} D_{v_i}=0. \mlabel{eq:v1}
\end{equation}

If $\gamma_0=1$, then $V=\{v_1\}$ and $D_{v_1}=0$ by Eq.~(\mref{eq:v1}). Thus
$C_{va}=0$ and its rank is $0=\gamma_0-1$.

If $\gamma_0\geq 2$, then by Eq.~(\mref{eq:v1}), $D_{v_1}$ is a linear combination of
$D_{v_i}, 2\leq i\leq \gamma_0$. Thus
$\rk(G)\leq\gamma_0-1$.
So we just need to prove $\rk(C_{va})\geq \gamma_0-1$. We will show this by induction on $\gamma_0\geq 2$.

First assume $\gamma_0=2$. Since $\Gamma$ is connected, by Eq.~(\mref{eq:vp}) the rows of $C_{va}$ are
non-zero. Then $\rk(C_{va})\geq 1$, as needed.

Next assume that the statement holds when $\gamma_0=n$ for $n\geq 2$
and consider a connected acyclic quiver $\Gamma$ with
$\gamma_0=n+1$. By first listing the arrows $p_1,\cdots,p_r$ of
$\Gamma$ that are connected to $v_1$, we see that $C_{va}$ is a
block matrix of the form
\begin{equation}
C_{va}=\left [ \begin{array}{cc} \vec{e} & \vec{0} \\ B & \overline{G} \end{array} \right ],
\notag 
\end{equation}
where $\vec{e}$ is a row vector of dimension $r$ with entries $\pm
1$, $\vec{0}$ is a zero row vector of dimension $\gamma_1-r$, $B$ is
a $(\gamma_0\times r)$-matrix and $\overline{G}$ is in fact the
vertex-arrow matrix $G_{\overline{\Gamma}}$ of the quiver
$\overline{\Gamma}$ obtained by deleting the vertex $v_1$ and its
attached arrows from $\Gamma$. Since $\overline{\Gamma}$ has $n$
vertices, by the induction hypothesis, the rank of $\overline{G}$ is
at least $n-1$. Thus there is a non-singular submatrix
$\overline{H}$ of $\overline{G}$ of size $(n-1)\times (n-1)$. Adding
back the first row and first column of $G$ to this submatrix
$\overline{H}$, we obtain a submatrix $H$ of $C_{va}$ of size
$n\times n$. Since the added first row is $(\pm 1,0,\cdots,0)$, the
added first column is not a linear combination of the other columns
in $H$. Thus $H$ is non-singular and the rank of $C_{va}$ is at
least $n$. This completes the induction and hence the proof of
Item~(\mref{it:a}).
\medskip

\noindent
(\mref{it:b})
We prove by induction on $\gamma_2$, which is also the number of
rows of $C_{ca}$.

When $\gamma_2=1$, $\pcyc_0$ is the unique primitive cycle. In this
case, $\Gamma$ is topologically homeomorphic to a point in the
surface $S$. By Remark~\mref{rk:cycle}(d), we have $D_{\pcyc_1}=0$.
Thus,  the unique row of $C_{ca}$ is zero. So,
$\rk(C_{ca})=0=\gamma_2-1$.

When $\gamma_2=2$, the rows of $C_{ca}$ are non-zero by definition
and the sum of the only two rows of $C_{ca}$ is zero by Eq.~(\mref{eq:zero}). Hence, $\rk(C_{ca})=1=\gamma_2-1$.

Assume that the statement is verified when $\gamma_2=n\geq 2$ and
consider $\Gamma$ with $\gamma_2=n+1$. By reordering the arrows
$p_1,\cdots,p_{\gamma_1}$ of $\Gamma$ if necessary, we can assume that the arrows of $\pcyc_1$ are $p_1,\cdots,p_s$ where $s\geq 2$ since $\Gamma$ is acyclic. Thus the coefficients
$c_{\gamma_0+1,1},c_{\gamma_0+1,2}, \cdots,c_{\gamma_0+1,\gamma_1}$
of $\pcyc_1$ in
Eq.~(\mref{eq:pp}) satisfy that
$c_{\gamma_0+1,1},\cdots,c_{\gamma_0+1,s}$ are all $\pm 1$ and
$c_{\gamma_0+1,s+1}=\cdots=c_{\gamma_0+1,\gamma_1}=0$. Then, since the sum of the row vectors of $C_{ca}$ is
zero by Eq.~(\mref{eq:zero}),  by reordering the primitive cycles
$\pcyc_1,\cdots,\pcyc_{n}$ if necessary, we can also assume that the
first coefficient of $\pcyc_1$ satisfies
$c_{\gamma_0+2,1}=-c_{\gamma_0+1,1}$. Thus, there is $r$ between $1$
and $s$ such that $c_{\gamma_0+2,i}=-c_{\gamma_0+1,i}$ for
$i=1,\cdots,r$ and $c_{\gamma_0+2,j}=0$ for $j=r+1,\cdots,s$. Then
the first two rows of $C_{ca}$ are of the form
$$\left [ \begin{array}{ccc}
\pm 1, \cdots, \pm 1, & \pm 1,\cdots,\pm 1, & 0,\cdots,0\\
\underbrace{\mp 1, \cdots, \mp 1}_{ r \text{ terms}},&
\underbrace{0,\cdots,0}_{s-r \text{ terms}}, & \underbrace{\ast,
\cdots, \ast}_{\gamma_1-r \text{ terms}}
\end{array} \right ]$$
Here the signs in the second row for the first $r$ terms are
opposite to the signs in the corresponding terms in the first row.
 Thus, the matrix $C_{ca}$ has the form
$$ C_{ca}=\left [ \begin{array}{ccc} \vec a_1 & \vec b_1 \\
-\vec a_1 & \vec b_2 \\ O & \overline{K} \end{array} \right ],$$
where $\vec{a}_1$ is a row vector of dimension $r$ with entries $\pm
1$, $O$ is a zero matrix of size $(n-1)\times r$, $\vec{b}_1$,
$\vec{b}_2$ are row vectors of dimension $\gamma_1-r$ and
$\overline{K}$ is a matrix of size $(n-1)\times (\gamma_1-r)$. We
can write $$\vec
b_1=(c_{\gamma_0+1,r+1}\;\cdots\;c_{\gamma_0+1,s}~\;0\;\cdots\;0)=(\pm
1\;\cdots\;\pm 1\;\;\;0\;\cdots\;0)$$
$$\vec
b_2=(0\;\cdots\;0~\;c_{\gamma_0+2,s+1}\;\cdots\;c_{\gamma_0+2,\gamma_1})=
(\;\;0\;\;\cdots\;\;\;0\;\;\ast\;\cdots\;\ast)$$

Deleting the arrows $p_1,\cdots,p_r$, we get a quiver
$\overline{\Gamma}$ with $n$ primitive cycles
$\pcyc_1',\pcyc_3,\cdots,\pcyc_{n}$ where $\pcyc_1'$ is obtained
via amalgamating $\pcyc_1$ and $\pcyc_2$. Moreover, with
 $$\vec
b:=\vec b_1+\vec
b_2=(c_{\gamma_0+1,r+1}\;\cdots\;c_{\gamma_0+1,s}\;c_{\gamma_0+2,s+1}\;\cdots\;c_{\gamma_0+2,\gamma_1}),$$
we find that the cycle-arrow matrix $C_{ca,\overline{\Gamma}}$ of the
quiver $\overline{\Gamma}$ is just
$\left [ \begin{array}{cc}  \vec b \\
 \overline{K} \end{array} \right ].$

By the induction hypothesis, $\rk(C_{ca,\overline{\Gamma}})=n-1$. By
Eq.~(\mref{eq:zero}), $-\vec b$ is the sum of all rows of $\overline
K$. Hence, $\rk(\overline K)=n-1$, that is, $\overline K$ has full
row rank. Since $r\geq 1$, the second row $(-\vec e_1 \; \vec b_2)$
of $C_{ca}$ is linearly independent from the last $n-1$ rows of
$C_{ca}$. So, $\rk(C_{ca})\geq (n-1)+1=n$. By
Eq.~(\ref{eq:zero}), the row vectors of $C_{ca}$ are linearly
dependent. Therefore, $\rk(C_{ca})=n=\gamma_2-1$, completing the
induction.

\medskip

\noindent
(\mref{it:c})
By definition, $B_{\Gamma}$ is a $(\gamma_2\times \gamma_2)$-matrix.

In the special case of $\gamma_2=1$, there are no cycles on $\Gamma$
except the boundary $\pcyc_1$ for the unique face. So all the
arrows are on $\pcyc_1$ only. Thus $B_\Gamma=0$ and hence its rank
is $0$.

For the case when $\gamma_2>1$, we apply Lemma~\mref{lem:rk}. For
this we just need to verify that $B_{\Gamma}$ satisfies the
conditions for $M$ in the lemma as follows.

\begin{itemize}
\item
By definition, $B_\Gamma$ has entries in $\ZZ$ and hence in a number field, say $\QQ$.
\item
By Definition~\mref{de:boundary} on $e_{i,j}$,  $B_\Gamma$ is
symmetric and $e_{i,j}\leq 0$ for $i\neq j$.
\item
$B_\Gamma$ is irreducible since $\Gamma$ is connected.
\item
Since $\gamma_0>0$, each cycle $\pcyc_j, 1\leq j\leq
\gamma_2$ shares at least one arrow with the cycle of a
neighboring face. Thus $e_{j,j}>0$.
\item
By the definition of $e_{j,r}$, $1\leq j,r\leq \gamma_2$, the sum of the entries of each row of $B_\Gamma$ is zero. Thus the sum of the columns of $B_\Gamma$ is zero. Since $B_\Gamma$ is symmetric, the sum of the rows of $B_\Gamma$ is also zero.
\end{itemize}
Thus Item~(\mref{it:c}) is proved.  \end{proof}

\subsection{Dimensions of combinatorial derivations}

\begin{theorem}
Let $\Gamma$ be a connected finite acyclic quiver. Then,
\begin{enumerate}
\item
$\dim \frakD_V=|V|-1$; \mlabel{it:A}
\item
$\dim \frakD_F=|F|-1$; \mlabel{it:B}
\item
$\frakD_V$ and $\frakD_F$ are linearly disjoint subspaces of
$\frakD_E$. \mlabel{it:C}
\end{enumerate}
\mlabel{thm:diffsum}
\end{theorem}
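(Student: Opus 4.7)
Parts (a) and (b) follow directly from Theorem~\mref{thm:rank}. Since $\{D_{p,p}\mid p\in E\}$ is a linearly independent subset of $\frakD_E$ (Theorem~\mref{thm:dbase}), the linear map that assigns to each $D\in\frakD_V$ its coordinate vector in this set is injective, and its image is the row space of the vertex-arrow matrix $C_{va}$. By Theorem~\mref{thm:rank}(a) this space has dimension $\gamma_0-1$, so $\dim\frakD_V=|V|-1$. The identical argument, using $C_{ca}$ and Theorem~\mref{thm:rank}(b), yields $\dim\frakD_F=|F|-1$.

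For (c), the plan is to suppose $D=\sum_i a_i D_{v_i}=\sum_j b_j D_{\pcyc_j}$ lies in $\frakD_V\cap\frakD_F$ and force $D=0$ by turning the equality into a homological identity. Comparing coefficients of $D_{p,p}$ on the two sides gives, for every arrow $p$,
\begin{equation*}
a_{t(p)}-a_{h(p)} \;=\; \sum_j b_j\, d_{j,p},
\end{equation*}
where $d_{j,p}$ is the entry of the cycle-arrow matrix appearing in Eq.~(\mref{eq:pp}). Acyclicity of $\Gamma$ guarantees $t(p)\ne h(p)$ so the left-hand side is unambiguous.

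The key manoeuvre is to multiply this identity by $d_{j,p}$ and sum over all $p\in E$. The left-hand side vanishes because $\sum_p d_{j,p}\bigl(h(p)-t(p)\bigr)=0$ in $\bfk V$, which records the fact that the face boundary $\sum_p d_{j,p}\,p$ is a $1$-cycle of the underlying graph. On the right-hand side, an arrow $p$ shared by $\pcyc_j$ and a \emph{distinct} face $\pcyc_{j'}$ contributes $b_j-b_{j'}$, since the induced orientations on the two sides of $p$ are opposite and hence $d_{j,p}d_{j',p}=-1$; arrows of $\pcyc_j$ whose two sides lie in the same face contribute zero (as remarked in Remark~\mref{rk:cycle}(\mref{it:p1}) their $d_{j,p}$ vanishes), and arrows outside $\pcyc_j$ contribute zero trivially. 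Grouping the remaining terms by the partner face and invoking Definition~\mref{de:boundary} rewrites the identity as $\sum_r e_{j,r}\,b_r=0$ for each $j$, i.e., $B_\Gamma\vec{b}=0$.

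Finally, Theorem~\mref{thm:rank}(c) gives $\rk B_\Gamma=\gamma_2-1$, and the constant vector $(1,\ldots,1)$ lies in $\ker B_\Gamma$ because each row sum of $B_\Gamma$ vanishes by the definition of $e_{j,j}$. Hence $\vec{b}$ is a scalar multiple of $(1,\ldots,1)$, so $D=\sum_j b_j D_{\pcyc_j}$ is a scalar multiple of $\sum_j D_{\pcyc_j}$, which is zero by Eq.~(\mref{eq:zero}). Thus $D=0$. I expect the main technical obstacle to be the sign bookkeeping in the telescoping step — cleanly handling the three possibilities for an arrow on $\pcyc_j$ (shared with a distinct face, flanked on both sides by the same face, or not on $\pcyc_j$ at all) so that the sum assembles precisely into the $j$-th row of $B_\Gamma$ and nothing extra.
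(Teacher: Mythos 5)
Your proposal is correct and follows essentially the same route as the paper: parts (a) and (b) are read off from Theorem~\mref{thm:rank} via the linear independence of the $D_{p,p}$ exactly as in the text, and for (c) you reduce membership in $\frakD_V\cap\frakD_F$ to the linear system $B_\Gamma\vec{b}=0$ and then invoke Theorem~\mref{thm:rank}(\mref{it:c}) together with Eq.~(\mref{eq:zero}) just as the paper does. The only (welcome) difference is technical: where the paper reorients the arrows on each face boundary and telescopes (Lemmas~\mref{lem:disj1} and~\mref{lem:disj2}), you multiply the equation for arrow $p$ by $d_{j,p}$ and sum over all $p$ --- in effect noting that $B_\Gamma=C_{ca}C_{ca}^{\,T}$ and that the left-hand sides pair to zero because each face boundary is a $1$-cycle --- which assembles the same system a little more cleanly.
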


The following is a direct consequence of Theorem~\mref{thm:diffsum}. More applications of the theorem will be given in Section~\mref{sec:app}.
\begin{coro}
The dimensions of the spaces of derivations $\frakD_V$,
$\frakD_E$ and $\frakD_F$ of a connected acyclic quiver
 $\Gamma$ only depend on the underlying graph of $\Gamma$ and not depend on the choice of orientations of the edges.
\mlabel{co:deuler}
\end{coro}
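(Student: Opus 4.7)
The plan is to read off each of the three dimensions from results already established in Theorem~\mref{thm:diffsum} and from the canonical basis of Theorem~\mref{thm:dbase}, and then verify in each case that the input data is unchanged by reversing the orientation of arrows.

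First, I would apply Theorem~\mref{thm:diffsum}(\mref{it:A}), which gives $\dim \frakD_V = |V| - 1$. Since reorienting arrows does not affect the vertex set $V$, the count $|V|$ is visibly an invariant of the underlying graph. Next, for $\frakD_E$, the definition gives $\frakD_E = \mathrm{span}\{D_{p,p} \mid p \in E\}$. These operators form a subset of the canonical basis $\frakBal$ of $\diff(\bfk\Gamma)$ in Theorem~\mref{thm:dbase}, so they are linearly independent, and hence $\dim \frakD_E = |E|$, which is again manifestly orientation-independent.

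For $\frakD_F$, Theorem~\mref{thm:diffsum}(\mref{it:B}) gives $\dim \frakD_F = |F| - 1$, so the task reduces to showing that the face count $|F|$ depends only on the underlying graph. Here I would invoke the topological-quiver setup used throughout Section~\mref{sec:comb}: $\Gamma$ is embedded in an orientable surface $S$ of least possible genus $g$, and the faces are the connected components of $S \setminus \Gamma$. The minimum embedding genus $g$ is a classical invariant of the underlying unoriented graph, and the Euler formula for this cellular embedding gives $|V|-|E|+|F| = 2 - 2g$, which expresses $|F|$ in terms of quantities that depend only on the underlying graph.

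The main point requiring care is the last step: $|F|$ is a priori a feature of the chosen embedding rather than of $\Gamma$ itself, so one must emphasize that the embedding in question is the minimum-genus one and that the minimum genus of a graph is an intrinsic invariant of the underlying unoriented structure. Once this is granted, the three dimension formulas transfer automatically to the underlying graph.
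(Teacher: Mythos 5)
Your proposal is correct and follows essentially the same route as the paper, which presents this corollary as a direct consequence of the dimension formulas $\dim \frakD_V=|V|-1$, $\dim\frakD_F=|F|-1$ (Theorem~\mref{thm:diffsum}) and $\dim\frakD_E=|E|$ (from the linear independence of the $D_{p,p}$ in the canonical basis). Your extra remark that $|F|$ is well defined because the minimum-genus embedding is an invariant of the underlying unoriented graph is a point the paper leaves implicit, but it does not change the argument.
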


\begin{proof}
Item~(\mref{it:A}) follow directly from of Theorem~\mref{thm:rank}.(\mref{it:a}) by Eq.~(\mref{eq:vp}). Similarly, Item~(\mref{it:B}) follows directly from Theorem~\mref{thm:rank}.(\mref{it:b}) by Eq.~(\mref{eq:pp}). So we just need to prove Item~\mref{it:C}.

Let $D$ be in $\frakDv\cap \frakDp$. Then there are constants $a_i,
1\leq i\leq \gamma_0$, and $b_j, 1\leq j\leq \gamma_2$, in
$\bfk$ such that
\begin{equation} \sum_{i=1}^{\gamma_0}
a_iD_{v_i} = D=\sum_{j=1}^{\gamma_2} b_j D_{\pcyc_j}.
\mlabel{eq:disj1}
\end{equation}
To prove Proposition~\mref{thm:diffsum}.(\mref{it:C}), we just need to show $D=0$.

We first rewrite the left sum of Eq.~(\mref{eq:disj1}). By Eq.~(\mref{eq:vp}), we obtain
\begin{equation}
\sum_{i=1}^{\gamma_0} a_iD_{v_i} = \sum_{i=1}^{\gamma_0} a_i
\sum_{k=1}^{\gamma_1} c_{i,k} D_{p_k,p_k} =\sum_{k=1}^{\gamma_1}
\big(\sum_{i=1}^{\gamma_0}c_{i,k}a_i\big) D_{p_k,p_k}.
\notag 
\end{equation}
Since by definition,
$c_{i,k}=\left \{\begin{array}{ll} 1, & t(p_k)=v_i, \\
-1, & h(p_k)=v_i, \\ 0, & \text{otherwise}, \end{array} \right. $ we
obtain
\begin{equation}
\sum_{i=1}^{\gamma_0} a_i D_{v_i} = \sum_{k=1}^{\gamma_1}
(a_{t(p_k)}-a_{h(p_k)})D_{p_k,p_k}. \mlabel{eq:disj2a}
\end{equation}

On the other hand, by Eq.~(\mref{eq:pp}), the right sum of
Eq.~(\mref{eq:disj1}) can be written as
\begin{equation}
\sum_{j=1}^{\gamma_2} b_j D_{\pcyc_j} = \sum_{j=1}^{\gamma_2}
b_j \left(\sum_{k=1}^{\gamma_1} d_{j,k}D_{p_k,p_k}\right)
=\sum_{k=1}^{\gamma_1}
\left(\sum_{j=1}^{\gamma_2}b_jd_{j,k}\right) D_{p_k,p_k}.
\mlabel{eq:disjr}
\end{equation}
Here by Eq.~(\mref{eq:primd}), $ d_{j,k}=\left \{\begin{array}{ll}
1, &\text{if } p_k \text{ is on } \pcyc_j \text{ in clockwise direction}, \\
-1, &\text{if } p_k \text{ is on } \pcyc_j \text{ in counter clockwise direction}, \\
0, &\text{if } p_k \text{ is not on } \pcyc_j.
\end{array} \right .
$

By Remark~\mref{rk:cycle}.(\mref{it:ppm}), any given arrow $p_k$ appears twice on the boundaries of the cycles $\pcyc_j$, $1\leq j\leq \gamma_2$, once in clockwise direction and once in
counter clockwise direction.
Let $1\leq x(p_k)\leq \gamma_2$ (resp. $1\leq y(p_k)\leq
\gamma_2$) denote the label of the cycle containing $p_k$ in the
clockwise (resp. counter clockwise) direction. It is possible that $x(p_k)=y(p_k)$. Then we have
$d_{x(p_k),k}=1$ and $d_{y(p_k),k}=-1$. Then Eq.~(\ref{eq:disjr}) becomes
\begin{equation}
\sum_{j=1}^{\gamma_2} b_jD_{\pcyc_j} =
\sum_{k=1}^{\gamma_1} (b_{x(p_k)}-b_{y(p_k)})D_{p_k,p_k}.
\mlabel{eq:disj2b}
\end{equation}

Thus by Eq.~(\mref{eq:disj2a}) and (\mref{eq:disj2b}), we find that
Eq.~(\mref{eq:disj1}) is equivalent to
\begin{equation}
D=\sum_{k=1}^{\gamma_1} (a_{t(p_k)}-a_{h(p_k)})D_{p_k,p_k} =
\sum_{k=1}^{\gamma_1} (b_{x(p_k)}-b_{y(p_k)})D_{p_k,p_k}. \mlabel{eq:disj2}
\end{equation}

Since the set $\{D_{p_k,p_k}\,|\, 1\leq k\leq \gamma_1\}$ of derivations is linearly independent, we thus obtain the following system of linear equations:
\begin{equation}
 a_{t(p_k)}-a_{h(p_k)} =
 b_{x(p_k)}-b_{y(p_k)}, \quad
1\leq k\leq \gamma_1. \mlabel{eq:disj3}
\end{equation}

\begin{lemma}
For $1\leq k\leq \gamma_1$, a change of the direction of $p_k$ leads to a change of signs on both sides of the $k$-th equation in Eq.~(\mref{eq:disj3}), yielding an equivalent equation.
\mlabel{lem:disj1}
\end{lemma}

\begin{proof} Let $\Gamma'$ be the quiver with $p_k$ replaced by $q_k$ in the opposite direction. Then $t(p_k)=h(q_k)$ and $h(p_k)=t(q_k)$. Then the left hand side of Eq.~(\mref{eq:disj3}) becomes
$$a_{h(q_k)}-a_{t(q_k)}=-(a_{t(q_k)}-a_{h(q_k)}).$$

Likewise, by the definition of $x(p_k)$ and $y(p_k)$ we have
$x(q_k)=y(p_k)$ and $y(q_k)=x(p_k)$. This reverses the sign on the right hand side of Eq.~(\mref{eq:disj3}).
\end{proof}

\begin{lemma} A set of solutions $b_j, 1\leq j\leq \gamma_2$ in the system Eq.~(\mref{eq:disj3}) satisfy the following system of linear equations:
\begin{equation}
e_{j,j} b_j + \sum_{1\leq r\leq \gamma_2, r\neq j} e_{j,r}
b_{r} =0, \quad 1\leq j\leq \gamma_2,
\mlabel{eq:disj4}
\end{equation}
where $e_{i,j}$ is defined in Definition~\mref{de:boundary}.
Thus the coefficients matrix of the system in Eq.~(\mref{eq:disj3}) is the \bounmat $B_\Gamma$ of $\Gamma$.
\mlabel{lem:disj2}
\end{lemma}

\begin{proof}
Fix a $1\leq j\leq \gamma_2$. Let $p_{k_1},\cdots,
p_{k_s}$ be the arrows on $\pcyc_j$ that are also on $\pcyc_r$ for some $r\neq j$,
that is, these are the arrows on $\pcyc_j$ that are not in the interior of
$\pcyc_j$. Because of Lemma~\mref{lem:disj1}, we can change the
directions of some of the arrows $p_{k_1},\cdots, p_{k_s}$ so that
all the arrows in $\pcyc_j$ go clockwise, without changing the
solution set of the system in Eq.~(\mref{eq:disj3}). After this is done, when we add the
$k_\ell$-th equations
$$ a_{t(p_{k_\ell})} - a_{h(p_{k_\ell})} = b_{x(p_{k_\ell})} - b_{y(p_{k_\ell})}, \quad 1\leq \ell\leq s,$$
in the system~(\mref{eq:disj3}), the left hand sides add up to zero.
For the right hand side, we have $x(p_{k_\ell})=j$, $1 \leq \ell\leq
s$, and $y(p_{k_\ell})$ is the label of the other cycle that
$p_{k_\ell}$ is on. Note that $s=e_{j,j}$ by definition.
Thus the right hand sides add up to
$$
e_{j,j} b_j + \sum_{1\leq r\leq \gamma_2, r\neq j} e_{j,r} b_r,$$
for $e_{j,j}$ and $e_{j,r}$ as defined in the lemma. This finishes the proof.
\end{proof}

We can now complete the proof of Theorem~\mref{thm:diffsum}.(\mref{it:C}) as
follows. Let $D$ be in $\frakDv\cap \frakD_F$. Then there are $a_i, 1\leq
i\leq \gamma_0$ and $b_j, 1\leq j\leq\gamma_2$ such that
they satisfy Eq.~(\mref{eq:disj1}). Then they satisfy the linear
system in Eq.~(\mref{eq:disj3}).
Then by Lemma~\mref{lem:disj2}, $b_j$
satisfies the system of linear equations in  Eq.~(\mref{eq:disj4}).
Since the coefficient matrix of this system is the \bounmat $B_\Gamma$ by Lemma~\mref{lem:disj2} and hence has rank $\gamma_2-1$ by Theorem~\mref{thm:rank}.(\mref{it:c}), the system in Eq.~(\mref{eq:disj4}) has unique nonzero solutions $b_i$ up a constant. But the choice of $b_{x(p_k)}=b_{y(p_k)}, 1\leq k\leq \gamma_1$
together with $a_{t(p_k)}=a_{h(p_k)}, 1\leq i\leq \gamma_0$ is
already a nonzero solution of Eq.~(\mref{eq:disj3}) and hence of
Eq.~(\mref{eq:disj4}). Thus this gives the unique solution of
Eq.~(\mref{eq:disj3}).
Hence by Eq.~(\ref{eq:disj2}),
we have $D=0$. Therefore $\frakDv\cap \frakD_F=0$, showing that
$\frakDv$ and $\frakD_F$ are linearly disjoint.

Now the proof of Theorem~\mref{thm:diffsum}(c)
 is completed.
\end{proof}

\section{Euler's formula and Hochschild cohomology}
\mlabel{sec:app}
In this section we give two applications of the dimensional formulas of combinatorial derivations in Theorem~\mref{thm:diffsum}. We first present a differential strengthening of Euler's Polyhedron Theorem (Theorem~\mref{thm:euler}) by showing that the numerical relation in Euler's formula among the geometric objects of vertices, edges and faces of the underlying graph of a quiver comes from an algebraic relation among the spaces of derivations associated to these geometric objects.

\subsection{Euler's polyhedron formula from a differential point of view}
\mlabel{ss:deuler}


We have the following classical result~\mcite{Bo}\mcite{GT}.

\begin{theorem} $(${\bf Euler's Polyhedron Theorem}$)$ For any connected quiver $\Gamma$ of genus $g$,
we have
\begin{equation}
 |V|-|E|+|F|=2-2g.
\mlabel{eq:euler}
\end{equation}
\mlabel{thm:euler}
\end{theorem}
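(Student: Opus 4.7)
The plan is to read $|V|-|E|+|F|=2-2g$ as a dimension count for an algebraic decomposition of the edge-derivation space $\frakD_E$, exploiting the dimensional data of Theorem~\mref{thm:diffsum}. Since $\{D_{p,p}\}_{p\in E}$ is linearly independent, $\dim\frakD_E=|E|$; combined with $\dim\frakDv=|V|-1$, $\dim\frakDp=|F|-1$, and the linear disjointness $\frakDv\cap\frakDp=0$ from Theorem~\mref{thm:diffsum}, Euler's formula amounts precisely to the statement that $\frakDv\oplus\frakDp$ sits in $\frakD_E$ with codimension $2g$. The acyclicity hypothesis in Theorem~\mref{thm:diffsum} is not a real restriction, since both sides of Euler's formula and (by Corollary~\mref{co:deuler}) the dimensions involved are invariant under reorienting edges, so we may reorient $\Gamma$ to be acyclic without loss.

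For the planar case $g=0$, the concrete task is therefore to show $\frakD_E=\frakDv\oplus\frakDp$. I would prove the surjectivity $\frakDv+\frakDp=\frakD_E$ by expressing each $D_{p,p}$ in terms of vertex and face derivations. Locate $p$ on the boundaries of its two adjacent faces $f_1,f_2$ (possibly $f_1=f_2$); then the combination $D_{\pcyc_{f_1}}-D_{\pcyc_{f_2}}$, with signs normalized via Remark~\mref{rk:cycle}, contributes $\pm 2 D_{p,p}$ plus signed combinations of $D_{q,q}$ for the remaining edges $q$ along $\pcyc_{f_1}\cup\pcyc_{f_2}$. These stray terms are to be cleared by successively absorbing vertex derivations, whose expression via Eq.~(\mref{eq:innerq}) is precisely an alternating sum of edge derivations at the star of a vertex. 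The degenerate case $f_1=f_2$ (i.e.\ $p$ is a bridge, so $D_{p,p}$ does not appear in any $D_{\pcyc}$) is handled separately: here $D_{p,p}$ must in fact equal a vertex derivation directly, which can be checked by computing on the tree obtained after deleting non-bridges, invoking the global relation $\sum_j D_{\pcyc_j}=0$ from Remark~\mref{rk:cycle}.(\mref{it:ppm}) to see the consistency. Taking dimensions then yields $(|V|-1)+(|F|-1)=|E|$, the Euler formula for $g=0$.

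For $g>0$, I would introduce a $2g$-dimensional space $\frakD_H\subseteq\frakD_E$ of \emph{handle derivations}, one for each of $2g$ generators of $H_1(S;\bfk)$, realized as $\sum_{p\in c}\pm D_{p,p}$ for a chosen oriented cycle $c$ in $\Gamma$ representing the homology class. The target refined decomposition $\frakD_E=\frakDv\oplus\frakDp\oplus\frakD_H$ would then give the full Euler formula on taking dimensions. The independence of $\frakD_H$ from $\frakDv\oplus\frakDp$ should follow from the fact that handle cycles bound no face and so cannot be written via face derivations, while they survive the absorption procedure used to eliminate vertex-derivation contributions.

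The main obstacle is establishing $\frakDv+\frakDp=\frakD_E$ in the planar case: the natural expansion of $D_{p,p}$ depends on choices (which face to start with, in which direction to sweep the boundary), and one must verify that the resulting ambiguities are absorbed by vertex derivations. An induction on $|E|$ that either contracts a leaf edge (decreasing $|V|,|E|$ by one each, keeping $|F|$ fixed) or deletes a non-bridge edge (decreasing $|E|,|F|$ by one each, keeping $|V|$ fixed) looks the most promising route, provided one can match each combinatorial operation with the corresponding compression of $\frakDv$ and $\frakDp$ and verify that the inductive step is compatible with the direct-sum structure.
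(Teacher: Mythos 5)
First, a point of orientation: the paper does not prove Theorem~\mref{thm:euler} at all --- it quotes it as a classical fact from topological graph theory and then uses it, in the proof of Theorem~\mref{thm:diffeuler}, to deduce $\dim_{\bfk}\frakD_E/(\frakD_V\oplus\frakD_F)=2g$. Your proposal runs the implication in the opposite direction: establish the decomposition of $\frakD_E$ first, by direct linear algebra on the derivations, and read off Euler's formula by counting dimensions. This is exactly the direction the authors single out in the remark after Theorem~\mref{thm:diffeuler} as an open problem (``it would be interesting to find a proof of Theorem~\mref{thm:diffeuler} without using Euler's theorem''), so you are not reproducing the paper's argument but attempting something it deliberately leaves undone. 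The ingredients you import from Theorem~\mref{thm:diffsum} ($\dim\frakD_V=|V|-1$, $\dim\frakD_F=|F|-1$, $\frakD_V\cap\frakD_F=0$) are proved in the paper without Euler's formula, so there is no circularity there, and your reduction to the acyclic case by reorienting edges is legitimate since all quantities involved are orientation-independent.

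The genuine gap is that the one step carrying all the content --- the spanning statement $\frakD_V+\frakD_F=\frakD_E$ for $g=0$, and the existence and independence of a $2g$-dimensional complement $\frakD_H$ for $g>0$ --- is never established; you yourself call it ``the main obstacle.'' The procedure of ``successively absorbing vertex derivations'' to clear stray terms comes with no termination or well-definedness argument, and the independence claim for the handle derivations (``they cannot be written via face derivations'') only rules out membership in $\frakD_F$, not in the sum $\frakD_V\oplus\frakD_F$. What you are implicitly reconstructing is the cellular chain complex of the surface $S$: under $D_{p,p}\mapsto p$ one has $\frakD_E\cong C_1(\Gamma;\bfk)$, $\frakD_V$ is the cut space, $\frakD_F$ is the image of the boundary map on faces, and $\frakD_E/(\frakD_V\oplus\frakD_F)\cong H_1(S;\bfk)$. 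Making that identification rigorous requires (i) that a minimal-genus embedding of a connected graph is cellular, so that the faces are disks and their boundaries generate the cycle space modulo $H_1(S)$, and (ii) the computation $\dim H_1(S;\bfk)=2g$ --- topological inputs of essentially the same depth as Euler's formula itself. Your proposed induction on $|E|$ (contract a leaf edge, or delete a non-bridge edge) is the right way to make at least the planar case self-contained, but as written it is a plan rather than a proof: one must verify in each inductive step how $\frakD_V$, $\frakD_F$ and the direct-sum property transform, and none of that is carried out.
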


We show that there is a strengthening of Euler's numerical formula
in the context of derivations.
\begin{theorem} ({\bf Differential Formulation of Euler's Polyhedron Theorem})
For a connected finite acyclic quiver $\Gamma$ of genus $g$, the spaces $\frakD_V, \frakD_E, \frakD_F$ of vertex
derivations, arrow derivations and face derivations satisfy the following relation.
\begin{equation}
    \dim_{\bfk}\frakD_E/(\frakD_V\oplus \frakD_F)=2g,
\mlabel{eq:dimdiffsum}
\end{equation}
where the direct sum is the interior sum of subspaces.

In particular, in the case the genus $g=0$,
\begin{equation}
    \frakD_E = \frakD_V\oplus \frakD_F.
\mlabel{eq:diffsum}
\end{equation}
\mlabel{thm:diffeuler}
\end{theorem}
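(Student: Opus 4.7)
The plan is to derive Eq.~(\mref{eq:dimdiffsum}) by combining the dimension formulas of Theorem~\mref{thm:diffsum} with the classical Euler Polyhedron Theorem~\mref{thm:euler}. The structural work is already in hand, so the argument reduces to a short dimension count.

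First I would record that $\dim_{\bfk}\frakD_E=|E|$, which holds because $\{D_{p,p}\,|\,p\in E\}\subseteq\frakBal$ is part of the canonical basis of $\diff(\bfk\Gamma)$ by Theorem~\mref{thm:dbase} and is therefore linearly independent. Next I would verify the inclusion $\frakD_V+\frakD_F\subseteq\frakD_E$: this is immediate from Eq.~(\mref{eq:vp}), which expresses each vertex derivation $D_{v_i}$ as a $\bfk$-linear combination of edge derivations $D_{p_k,p_k}$, together with Eq.~(\mref{eq:pp}), which does the same for each face derivation $D_{\pcyc_j}$. Combined with Theorem~\mref{thm:diffsum}, giving $\dim_{\bfk}\frakD_V=|V|-1$, $\dim_{\bfk}\frakD_F=|F|-1$, and the linear disjointness $\frakD_V\cap\frakD_F=0$, this yields
$$\dim_{\bfk}(\frakD_V\oplus\frakD_F)=|V|+|F|-2,$$
where the sum is the internal direct sum inside $\frakD_E$.

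Finally I would invoke Euler's Polyhedron Theorem to compute
$$\dim_{\bfk}\frakD_E/(\frakD_V\oplus\frakD_F)=|E|-(|V|+|F|-2)=2-(|V|-|E|+|F|)=2-(2-2g)=2g,$$
which is exactly Eq.~(\mref{eq:dimdiffsum}). The specialization $g=0$ then forces the quotient to vanish, giving $\frakD_E=\frakD_V\oplus\frakD_F$ as claimed.

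Since every ingredient is already available, there is essentially no obstacle; the proof amounts to a straightforward assembly of inclusions, dimension counts, and Euler's identity. If anything requires care, it is simply the bookkeeping that $\frakD_V\oplus\frakD_F$ is to be interpreted as an internal direct sum sitting inside $\frakD_E$, which is guaranteed by the two inclusions above together with Theorem~\mref{thm:diffsum}(\mref{it:C}).
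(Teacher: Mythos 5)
Your proposal is correct and follows essentially the same route as the paper's own proof: combine $\dim\frakD_V=|V|-1$, $\dim\frakD_F=|F|-1$ and the linear disjointness from Theorem~\ref{thm:diffsum} with $\dim\frakD_E=|E|$ and Euler's formula to get the dimension count $|E|-(|V|+|F|-2)=2g$. The only additional care you take (checking explicitly that $\frakD_V+\frakD_F\subseteq\frakD_E$ via Eqs.~(\ref{eq:vp}) and (\ref{eq:pp})) is also implicit in the paper's argument, so nothing is missing.
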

\begin{proof}
By Theorem~\mref{thm:diffsum}, we have
$$\dim(\frakD_V \oplus \frakD_F)=\dim \frakD_V +\dim \frakD_F=|V|-1+|F|-1.$$
But, $ \dim \frakD_E = |E|.$ By Euler's formula, \begin{equation}
\dim \frakD_E-\dim(\frakD_V \oplus \frakD_F)= |E|-(|V|-1+|F|-1)=2g.
\mlabel{eq:proofdiffsum}
\end{equation}
 This gives
Eq.~(\mref{eq:dimdiffsum}) since $\frakD_V\oplus \frakD_F$ is a
subspace of $\frakD_E$.
\end{proof}

\begin{remark}
{\rm As we see above, Euler's theorem is used in the proof of
Theorem~\mref{thm:diffeuler}. Conversely, the equation
$\dim_{\bfk}\frakD_E/(\frakD_V\oplus \frakD_F)=2g$ gives Euler's
formula by Eq.~(\mref{eq:proofdiffsum}). Thus, as its name suggests,
Theorem~\mref{thm:diffeuler} gives a strengthened form of Euler's
theorem from the view point of derivations. It would be interesting to find a proof of Theorem~\mref{thm:diffeuler} without using Euler's theorem.
}
\end{remark}

\subsection{Outer derivations on a path algebra}
\mlabel{ss:out}

\delete{
Consider the exact sequence
\begin{equation}
0\to \indiff(\bfk\Gamma) \to \diff(\bfk\Gamma) \to HH^1(\bfk\Gamma)
\to 0
\notag 
\end{equation}
from Eq.~(\mref{eq:outd}).

By Corollary~\mref{co:qcent}, we have
$\bfk\Gamma/\bfk\cong \indiff(\bfk\Gamma)$ unless $\Gamma$ is an
oriented cycle. Thus,  to a large extent, $\indiff(\bfk\Gamma)$
determines the algebraic properties of $\bfk\Gamma$.
}

We next apply Theorem~\mref{thm:diffsum} to study $HH^1(\bfk\Gamma)$. We first
give a dimensional formula of $HH^1(\bfk\Gamma)$. Then we obtain a canonical basis
of $HH^1(\bfk\Gamma)$.

\subsubsection{A dimensional formula of $HH^1(\bfk\Gamma)$}

Denote
$$ \calpc:=\{s\in \calp\,|\, h(s)=t(s)\}, \quad \frakBc:=\{D_s\ |\ s\in \calpc\}, \quad \frakDc:=\bfk\frakBc=\frakD(\bfk \calpc).$$
Then on one hand we have the disjoint union $ \calp=\calpa \sqcup
\calpc,$ and hence $\bfk \Gamma = \bfk \calpa \oplus \bfk \calpc.$
Since $\ker \frakD=\bfk\subseteq \bfk \calpc$ for the linear map
$\frakD:\bfk\Gamma\to \diff(\bfk\Gamma)$ in Eq.~(\mref{eq:indh}), we
have
\begin{equation}
 \indiff(\bfk\Gamma) =\frakD(\bfk \Gamma) \cong \frakDa \oplus \frakDc.
\mlabel{eq:ind2}
\end{equation}

On the other hand, since $\frakDa\subseteq \indiff(\bfk\Gamma)$ and
$\diff(\bfk\Gamma)=\frakDa\oplus \frakDal$ from
Theorem~\mref{thm:dbase}, we have
\begin{equation}
\indiff(\bfk\Gamma) =\indiff(\bfk\Gamma)\cap(\frakDa\oplus
\frakDal)=\frakDa \oplus (\indiff(\bfk\Gamma)\cap \frakDal).
\mlabel{eq:ind3}
\end{equation}
By Proposition~\mref{pp:basis2}, we have $\frakDc \subseteq
\indiff(\bfk\Gamma)\cap\frakDal$. Thus from Eq.~(\mref{eq:ind2}) and
Eq.~(\mref{eq:ind3}) we obtain
\begin{equation}
\indiff(\bfk\Gamma)\cap\frakDal =\frakDc.
\notag 
\end{equation}
Therefore we have
\begin{eqnarray}
HH^1({\bf k}\Gamma)&=&\diff(\bfk \Gamma)/\indiff(\bfk\Gamma) \notag \\
&=& (\indiff(\bfk\Gamma)+\frakDal)/\indiff(\bfk\Gamma) \notag \\
&\cong& \frakDal/(\indiff(\bfk\Gamma)\cap \frakDal)\mlabel{eq:outdiff}\\
& =&
\frakDal/\frakDc,\notag
\end{eqnarray}
giving us the following commutative diagram of exact sequences
of Lie algebras.
$$
\begin{CD}
     0 @>>> \indiff({\bf k}\Gamma) @>>> \diff({\bf k}\Gamma) @>>> HH^1({\bf k}\Gamma) @>>> 0\\
                         @.          @AAA                   @AAA                             @|                          @. \\
 0 @>>>     \frakDc     @>>>      \frakDal        @>>>       \frakDal/\frakDc               @>>> 0
 \end{CD}
$$

An {\bf almost oriented cycle} in a quiver $\Gamma$ is defined to be
a pair $(p,r)$ where $p\in E$ and $r\in \calp$ with $r\not=p$ and $r\parallel p$. It is so namely since $p$ and $r$ form an oriented cycle by reversing the arrow $p$. Let $\calpal$ be the set of almost
oriented cycles of $\Gamma$. Denote
\begin{equation}
\frakB_E:=\{D_{p,p}\ | p\in E\},\quad
   \frakB_{AL}:=\{D_{p,r}\ | (p,r)\in\Gamma_{AL}\},\quad
\frakD_{AL}=\bfk \frakB_{2,2}. \mlabel{eq:diff22}
\end{equation}
Then by the definition of $\frakBal$ in Eq.~(\mref{eq:baal}), we have the disjoint union
\begin{equation} \frakBal = \frakB_E \sqcup \frakB_{AL} \
\ \ \ \text{and} \ \ \ \ \frakDal = \frakD_E \oplus \frakD_{AL}.
\mlabel{eq:b2}
\end{equation}
From this and Eq.(\ref{eq:outdiff}),  we obtain
\begin{equation}
HH^1(\bfk\Gamma) \cong \frakDal/\frakDv \cong
\big(\frakD_E/\frakDv\big) \oplus \frakD_{AL}. \mlabel{eq:gamma2}
\end{equation}

\begin{prop}
Let $\Gamma$ be a connected  acyclic quiver. Then
$$\dim_\bfk HH^1(\bfk\Gamma) = |F| +|\calpal|-1+2g. $$
In the case $g=0$, $HH^1(\bfk\Gamma)\not=0$ if and only if\,
$\Gamma$ contains an unoriented cycle. \mlabel{pp:odim}
\end{prop}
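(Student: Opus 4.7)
The plan is to combine the isomorphism
\[
HH^1(\bfk\Gamma)\;\cong\;\bigl(\frakD_E/\frakD_V\bigr)\oplus \frakD_{AL}
\]
from Eq.~(\mref{eq:gamma2}) with the dimension calculations already in hand, and then apply Euler's Polyhedron Theorem to rewrite the result in the desired form.

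First, I would compute $\dim_\bfk(\frakD_E/\frakD_V)$. The edge derivations $\{D_{p,p}\mid p\in E\}$ are linearly independent (they are part of the canonical basis $\frakB$ of Theorem~\mref{thm:dbase}), so $\dim\frakD_E=|E|$. By Theorem~\mref{thm:diffsum}.(\mref{it:A}), $\dim\frakD_V=|V|-1$, and $\frakD_V\subseteq\frakD_E$ by Eq.~(\mref{eq:vp}). Hence
\[
\dim_\bfk(\frakD_E/\frakD_V)=|E|-|V|+1.
\]
Euler's formula (Theorem~\mref{thm:euler}) then rewrites this as $|F|-1+2g$. Separately, $\dim\frakD_{AL}=|\calpal|$ directly from the definition~(\mref{eq:diff22}) and the linear independence of $\frakB_{AL}\subseteq\frakB$. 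Summing the two summands gives the asserted formula
\[
\dim_\bfk HH^1(\bfk\Gamma)=|F|+|\calpal|-1+2g.
\]

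For the second assertion, with $g=0$ the formula reduces to $\dim_\bfk HH^1(\bfk\Gamma)=|F|-1+|\calpal|$. If $\Gamma$ contains no unoriented cycle, then its underlying graph is a tree, so $|V|-|E|=1$, and Euler's formula forces $|F|=1$. Moreover, in a tree, any two vertices are joined by a unique path, so no two distinct parallel paths exist, giving $|\calpal|=0$; hence $HH^1(\bfk\Gamma)=0$. Conversely, if $\Gamma$ does contain an unoriented cycle, then since $\Gamma$ is planar ($g=0$), the Jordan curve theorem forces the cycle to bound at least one additional face, so $|F|\geq 2$ and thus $HH^1(\bfk\Gamma)\neq 0$.

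The main obstacle is not really computational — everything follows from the machinery already assembled — but rather a bookkeeping point: one must verify that $\frakD_V\subseteq \frakD_E$ so that the quotient $\frakD_E/\frakD_V$ makes sense, and that the decomposition in Eq.~(\mref{eq:gamma2}) is indeed a direct sum of the stated pieces. These are immediate from Eq.~(\mref{eq:vp}) and Eq.~(\mref{eq:b2}), respectively. The only slightly delicate part is the equivalence in the second statement, where one must invoke a topological fact (Jordan curve theorem, or equivalently Euler's formula applied to a tree) to conclude that an unoriented cycle in a planar quiver produces at least two faces.
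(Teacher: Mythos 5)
Your proposal is correct and follows essentially the same route as the paper: both use the decomposition $HH^1(\bfk\Gamma)\cong(\frakD_E/\frakD_V)\oplus\frakD_{AL}$ from Eq.~(\mref{eq:gamma2}), the rank/dimension facts $\dim\frakD_E=|E|$, $\dim\frakD_V=|V|-1$, $\dim\frakD_{AL}=|\calpal|$, and Euler's formula to convert $|E|-|V|+1$ into $|F|-1+2g$. The paper dismisses the $g=0$ equivalence with ``follows easily''; your tree/Jordan-curve elaboration of that step is a correct filling-in of the same idea rather than a different argument.
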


\begin{proof}
By Eq.~(\mref{eq:gamma2}),  Theorem~\mref{thm:rank} and Euler's theorem, we have
$$\dim HH^1(\bfk\Gamma)=\dim \frakD_E-\dim \frakDv+\dim\frakD_{AL} =|E|-|V|+1+|\Gamma_{AL}| = |F|+|\Gamma_{AL}|+2g-1.$$

Then the last statement follows easily.
\end{proof}

Compare Proposition~\mref{pp:odim} with Happel's formula~\cite{Ha},
\begin{equation}\label{eq:dim1}
\dim_{\bf k}HH^1({\bf k}\Gamma)=1-\mid V\mid+\sum_{\alpha\in
E}v(\alpha)
\end{equation}
where $v(\alpha)=\dim_{\bf k}t(\alpha){\bf k}\Gamma h(\alpha)$.
The two formulas can be easily derived from each other. Our formula makes it easy to guess a canonical basis of $HH^1(\bfk\Gamma)$. Indeed, verifying that this guess actually works is the motivation behind the introduction of the combinatorial derivations in Section~\mref{sec:comb}.

\subsubsection{A canonical basis of $HH^1(\bfk \Gamma)$}
\mlabel{ss:obase}
Proposition~\mref{pp:odim} suggests that a
canonical basis of $HH^1(\bfk\Gamma)$ can be obtained from
derivations defined from the faces (through their unoriented cycles) and $\calpal$. We show that this is indeed the case.

Denote
\begin{equation}
\frakB_F^-=\{D_{\pcyc_i}\ |\ 2\leq i\leq \gamma_2\}, \mlabel{eq:diffp}
\end{equation}
then by Theorem~\mref{thm:diffsum}.(\mref{it:B}) and Eq.~(\mref{eq:zero}), we have  $\frakD_F=\bfk\frakB_F^-$.
Our main result on outer derivations is the following

\begin{theorem} {\bf (Basis Theorem of the First Hochschild Cohomology)}
Let $\Gamma$ be a connected finite acyclic quiver over a ground
field $\bfk$ of characteristic $0$. Then the disjoint union
$$\frakB_{AL}\sqcup \frakB_F^-\sqcup\frakB^*$$
 forms a basis of $HH^1(\bfk\Gamma)$, where $\frakB^*$ is any basis of $\frakD_E/(\frakD_V\oplus\frakD_F)$. \mlabel{thm:obase}
\end{theorem}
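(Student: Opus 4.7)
The plan is to combine the isomorphism of Eq.~(\mref{eq:gamma2}) with the dimensional content of Theorems~\mref{thm:diffsum} and~\mref{thm:diffeuler} to assemble the claimed basis from its two natural pieces. Under our acyclicity hypothesis, $\calpc=V$ so that $\frakDc=\frakD_V$, and Eq.~(\mref{eq:gamma2}) then reads
\begin{equation*}
HH^1(\bfk\Gamma)\;\cong\;(\frakD_E/\frakD_V)\oplus \frakD_{AL}.
\end{equation*}
Finding a basis of the left-hand side thus reduces to exhibiting bases of the two summands and taking their disjoint union.

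The right-hand summand is handled directly by Theorem~\mref{thm:dbase} together with Eq.~(\mref{eq:b2}): since $\frakBal$ is linearly independent and $\frakDal=\frakD_E\oplus\frakD_{AL}$, the subset $\frakB_{AL}$ is a basis of $\frakD_{AL}$.

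For the quotient $\frakD_E/\frakD_V$, Theorem~\mref{thm:diffsum}.(\mref{it:C}) guarantees that $\frakD_V$ and $\frakD_F$ are linearly disjoint subspaces of $\frakD_E$, so $\frakD_V\oplus\frakD_F\subseteq \frakD_E$ is an internal direct sum; Theorem~\mref{thm:diffeuler} then shows this sum has codimension $2g=|\frakB^*|$ in $\frakD_E$. Lifting $\frakB^*$ to a set $\widetilde{\frakB}^*\subseteq \frakD_E$, we obtain $\frakD_E=\frakD_V\oplus\frakD_F\oplus\bfk\widetilde{\frakB}^*$, so $\frakD_E/\frakD_V\cong \frakD_F\oplus \bfk\widetilde{\frakB}^*$. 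By Eq.~(\mref{eq:zero}) the single relation $\sum_i D_{\pcyc_i}=0$ makes $\frakB_F^-$ a basis of $\frakD_F$ (its cardinality $\gamma_2-1=|F|-1$ matches $\dim\frakD_F$ from Theorem~\mref{thm:diffsum}.(\mref{it:B})). Consequently the disjoint union $\frakB_F^-\sqcup\frakB^*$ descends to a basis of $\frakD_E/\frakD_V$.

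Combining the two bases through the direct sum decomposition yields $\frakB_{AL}\sqcup\frakB_F^-\sqcup\frakB^*$ as a basis of $HH^1(\bfk\Gamma)$; the cardinality $|\calpal|+(|F|-1)+2g$ agrees with $\dim_\bfk HH^1(\bfk\Gamma)$ from Proposition~\mref{pp:odim}, providing a sanity check. The argument is essentially bookkeeping, and the only subtle point is the identification $\frakDc=\frakD_V$ under the acyclicity hypothesis, so that the quotient in Eq.~(\mref{eq:gamma2}) really is by $\frakD_V$ rather than by a larger space incorporating derivations from nontrivial oriented cycles; the substantive content -- the linear disjointness of $\frakD_V$ and $\frakD_F$ and the codimension computation $\dim\frakD_E/(\frakD_V\oplus\frakD_F)=2g$ -- has already been done in the preceding sections.
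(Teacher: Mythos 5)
Your proposal is correct and follows essentially the same route as the paper: both start from Eq.~(\mref{eq:gamma2}), take $\frakB_{AL}$ as a basis of $\frakD_{AL}$ via Eq.~(\mref{eq:b2}), and split $\frakD_E/\frakD_V$ as $\frakD_F$ plus a complement isomorphic to $\frakD_E/(\frakD_V\oplus\frakD_F)$ using Theorem~\mref{thm:diffsum} and Eq.~(\mref{eq:zero}). The paper phrases the last step through the isomorphism $(\frakD_E/\frakD_V)/((\frakD_V\oplus\frakD_F)/\frakD_V)\cong\frakD_E/(\frakD_V\oplus\frakD_F)$ rather than by lifting $\frakB^*$ explicitly, but this is only a cosmetic difference; your added remarks on $\frakDc=\frakD_V$ under acyclicity and the dimension check against Proposition~\mref{pp:odim} are consistent with the text.
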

\begin{proof}
We have
$\frakD_E/(\frakD_V\oplus\frakD_F)\cong(\frakD_E/\frakD_V)/((\frakD_V\oplus\frakD_F)/\frakD_V)$.
It follows that
\begin{eqnarray*}
\frakD_E/\frakD_V &\cong & \frakD_E/(\frakD_V\oplus\frakD_F)\oplus
(\frakD_V\oplus\frakD_F)/\frakD_V\\
 &\cong &
\frakD_E/(\frakD_V\oplus\frakD_F)\oplus\frakD_F.
\end{eqnarray*}
 Thus by Eq.~(\mref{eq:gamma2}),
$HH^1(\bfk\Gamma)\cong (\frakD_E/\frakDv)\oplus \frakD_{AL}\cong
\frakD_E/(\frakD_V\oplus\frakD_F)\oplus\frakD_F\oplus \frakD_{AL}.$

  $\frakD_F$
has a basis $\frakB_F^-$ by
Theorem~\mref{thm:diffsum}.(\mref{it:B}) and $\frakD_{AL}$ has a
basis $\frakB_{AL}$. Since they are disjoint by Eq.~(\mref{eq:b2}), we
obtain Theorem~\mref{thm:obase} for any basis $\frakB^*$ of
$\frakD_E/(\frakD_V\oplus\frakD_F)$.
\end{proof}

We require that $\bfk$ is of characteristic $0$ in
Theorem~\mref{thm:obase} because its proof depends on
Theorem~\mref{thm:lt}.
When the genus $g$ is $0$, $\Gamma$ is a planar quiver. In this case, by Euler's theorem, $\dim\frakD_E/(\frakD_V\oplus\frakD_F)=0$. Also, we can take $\pcyc_1$ to be the primitive cycle for the unique unbounded face. Thus we can make the choice of $\frakB_F^-$ as well as $\frakB_{AL}$ completely canonical.
Hence, we have
\begin{coro} {\bf (Basis Theorem for Planar Quiver)}
Let $\Gamma$ be a connected finite planar acyclic quiver and let the
ground field $\bfk$ be of characteristic $0$. Then the disjoint
union
$$\frakB_{AL}\cup \{D_\pcyc \,|\,\pcyc \text{ is a bounded primitive cycle}\}$$
 forms a basis of $HH^1(\bfk\Gamma)$. \mlabel{coro:obase}
\end{coro}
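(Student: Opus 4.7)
The plan is to derive this result as a direct specialization of Theorem~\mref{thm:obase} to the planar case $g=0$. Since that theorem already provides a basis of $HH^1(\bfk\Gamma)$ of the form $\frakB_{AL}\sqcup\frakB_F^-\sqcup\frakB^*$, the only work is to identify each of the three pieces explicitly in the planar setting and confirm that $\frakB^*$ collapses to the empty set.

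First, I would invoke Theorem~\mref{thm:diffeuler} (the Differential Formulation of Euler's Polyhedron Theorem). The hypothesis $g=0$ of the corollary matches exactly the specialization stated in Eq.~(\mref{eq:diffsum}) there, which gives $\frakD_E=\frakD_V\oplus\frakD_F$. Consequently the quotient $\frakD_E/(\frakD_V\oplus\frakD_F)$ is the zero vector space, so the empty set is a valid choice of $\frakB^*$. This removes the $\frakB^*$ component entirely from the basis supplied by Theorem~\mref{thm:obase}.

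Next, I would translate the indexing set appearing in $\frakB_F^-=\{D_{\pcyc_i}\mid 2\le i\le\gamma_2\}$ (defined in Eq.~(\mref{eq:diffp})) into the language of bounded versus unbounded faces. Recall that primitive cycles are in bijection with faces $F$ of the embedded quiver, and that for a connected finite planar quiver there is exactly one unbounded face. Taking $\pcyc_1$ to be the primitive cycle of the unbounded face---which is compatible with the convention adopted in Remark~\mref{rk:cycle}(\mref{it:inf})---the remaining indices $2\le i\le\gamma_2$ enumerate precisely the primitive cycles of the bounded faces. Hence $\frakB_F^-=\{D_\pcyc\mid\pcyc\text{ is a bounded primitive cycle}\}$.

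Combining the two identifications above with the $\frakB_{AL}$ part (which is unchanged), Theorem~\mref{thm:obase} yields exactly the basis claimed in the corollary. The hypothesis $\mrm{char}(\bfk)=0$ is inherited from the application of Theorem~\mref{thm:obase}. There is no real obstacle here: the corollary is essentially a restatement of Theorem~\mref{thm:obase} in which the planarity together with Euler's theorem makes the generic correction term $\frakB^*$ disappear and allows the face-derivation part of the basis to be described canonically through the unique unbounded face.
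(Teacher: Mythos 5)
Your proposal is correct and follows essentially the same route as the paper: the paper likewise obtains the corollary by specializing Theorem~\mref{thm:obase} to genus $0$, using Euler's theorem to conclude $\dim\frakD_E/(\frakD_V\oplus\frakD_F)=0$ so that $\frakB^*$ may be taken empty, and choosing $\pcyc_1$ to be the primitive cycle of the unique unbounded face so that $\frakB_F^-$ consists exactly of the bounded primitive cycles. No gaps.
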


\subsubsection{Lie algebra structure of $HH^1(\bfk\Gamma)$}
From the basis $\frakB_{AL}\cup \frakB_F^-$ in
Corollary~\mref{coro:obase}, the structural constants of the Lie algebra
$HH^1(\bfk\Gamma)$ can be computed explicitly by Eq.~(\mref{eq:lie3}) since we
have $\frakB_{AL}\subseteq \frakB_2$ and $\frakB_F\subseteq
\bfk\frakB_{E}=\frakD_E\subseteq \frakD_2=\bfk\frakB_2$. Concretely,
for $D_{\pcyc}, D_{\mathfrak q}\in \frakB_F$, let
\[ D_{\pcyc}= a_1 D_{p_1,p_1}+a_2D_{p_2,p_2}+
\cdots +a_{\gamma_1} D_{p_{\gamma_1},p_{\gamma_1}}, \;\;\;\;\;\;\;
D_{\mathfrak q}= b_1 D_{p_1,p_1}+b_2D_{p_2,p_2}+ \cdots
+b_{\gamma_1} D_{p_{\gamma_1},p_{\gamma_1}}
\]
for the arrow set $E=\{p_1,p_2,\cdots,p_{\gamma_1}\}$. Then by
Eq.~(\mref{eq:lie3}) we have \begin{eqnarray*}
[D_{\pcyc},D_{\mathfrak q}]&=&
\sum_{i,j=1}^{\gamma_1}a_ib_j[D_{p_i,p_i},D_{p_j,p_j}] \\
&=&\sum_{i,j=1}^{\gamma_1}a_ib_j(D_{p_j,D_{p_i,p_i}(p_j)}-D_{p_i,D_{p_j,p_j}(p_i)}) \\
&=&\sum_{i=1}^{\gamma_1}a_ib_i(D_{p_i,p_i}-D_{p_i,p_i}) \\
&=& 0.
\end{eqnarray*}
This means that $\frakD_F=\bfk\frakB_F$ is an abelian Lie sub-algebra
of the Lie algebra $HH^1(\bfk\Gamma)$.

For $D_{r,s}\in \frakB_{AL}$, by Eq.~(\mref{eq:lie3}) we have
\begin{eqnarray*}
[D_{r,s},D_{\pcyc}]&=& a_1[D_{r,s},D_{p_1,p_1}]+\cdots+a_{\gamma_1}[D_{r,s},D_{p_{\gamma_1},p_{\gamma_1}}] \\
&=& a_1(D_{p_1,D_{r,s}(p_1)}-D_{r,D_{p_1,p_1}(s)})+\cdots+a_{\gamma_1}(D_{p_{\gamma_1},D_{r,s}(p_{\gamma_1})}-D_{r,D_{p_{\gamma_1},p_{\gamma_1}}(s)}) \\
&=&
(a_1D_{p_1,D_{r,s}(p_1)}+\cdots+a_{\gamma_1}D_{p_{\gamma_1},D_{r,s}(p_{\gamma_1})})
-(a_1D_{r,D_{p_1,p_1}(s)}+\cdots+a_{\gamma_1}D_{r,D_{p_{\gamma_1},p_{\gamma_1}}(s)}).
\end{eqnarray*}
Write $r=p_{i_0}$ for some $1\leq i_0\leq \gamma_1$. Then
$a_1D_{p_1,D_{r,s}(p_1)}+\cdots+a_{\gamma_1}D_{p_{\gamma_1},D_{r,s}(p_{\gamma_1})}=a_{i_0}D_{r,s}$.
Also, write $s=p_{i_1}\cdots p_{i_t}$ for arrows $p_{i_1}, \cdots,
p_{i_t}$. Thus,
$a_1D_{r,D_{p_1,p_1}(s)}+\cdots+a_{\gamma_1}D_{r,D_{p_{\gamma_1},p_{\gamma_1}}(s)}=(a_{i_1}+\cdots+a_{i_t})D_{r,s}.$
In summary,
$$[D_{r,s},D_{\pcyc}] =(a_{i_0}-a_{i_1}-\cdots-a_{i_t})D_{r,s}.$$
which means that $D_{r,s}$ is the eigenvector under the adjoint
action of $D_\pcyc$ with eigenvalue $-a_{i_0}+a_{i_1}+\cdots
+a_{i_r}.$
 It follows that $\frakD_{AL}=\bfk\frakB_{AL}$ is a
Lie ideal of $HH^1(\bfk\Gamma)$. We have proved the following
result.
\begin{theorem}
{\bf (Structure Theorem of the First Hochschild Cohomology)} Let $\Gamma$ be a connected planar finite acyclic quiver and let the
ground field $\bfk$ be of characteristic $0$. Then the Lie algebra
$HH^1(\bfk\Gamma)$ is the semi-direct sum of the Lie ideal
$\frakD_{AL}$ and the abelian Lie subalgebra $\frakD_F$:
\[HH^1(\bfk\Gamma)=\frakD_{AL}\rtimes_\varphi \frakD_F.\]
Here the action $\varphi$ of $\frakD_F$ on $\frakD_{AL}$ is given as
follows. For each given $D_\pcyc$ with $\pcyc=a_1
D_{p_1,p_1}+a_2D_{p_2,p_2}+ \cdots +a_{\gamma_1}
D_{p_{\gamma_1},p_{\gamma_1}}$ and $D_{r,s}\in \frakD_{AL}$ with
$r=p_{i_0}$ and $s=p_{i_1}\cdots p_{i_t}$, $D_{r,s}$ is the
eigenvector under the adjoint action of $D_\pcyc$ with eigenvalue
$-a_{i_0}+a_{i_1}+\cdots +a_{i_r}.$
\mlabel{thm:str}
\end{theorem}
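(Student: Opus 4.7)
The plan is to use the basis $\frakB_{AL} \sqcup \frakB_F^-$ of $HH^1(\bfk\Gamma)$ furnished by Corollary~\mref{coro:obase} and compute every required Lie bracket directly from the universal commutator formula Eq.~(\mref{eq:lie3}). Since the bracket on $HH^1(\bfk\Gamma)$ is inherited from $\diff(\bfk\Gamma)$ via Eq.~(\mref{eq:outdiff}), all computations can be performed on representatives in $\diff(\bfk\Gamma)$ and reduced modulo inner derivations at the end. The semidirect sum decomposition then follows from three successive verifications.

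First, I would show $\frakD_F$ is an abelian Lie subalgebra by writing two face derivations as edge-derivation combinations via Lemma~\mref{lem:fdiff}, expanding the bracket bilinearly, and applying Eq.~(\mref{eq:lie3}) together with the identities $D_{p_k,p_k}(p_\ell) = \delta_{k,\ell}\, p_\ell$: only the diagonal terms $i=j$ survive, and they cancel.

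Second, I would prove $[\frakD_F, \frakD_{AL}] \subseteq \frakD_{AL}$ and extract the eigenvalues of $\varphi$ simultaneously. For $D_{r,s} \in \frakB_{AL}$ with $r = p_{i_0}$ and $s = p_{i_1}\cdots p_{i_t}$, and $D_\pcyc = \sum_k a_k D_{p_k,p_k}$, expanding $[D_{r,s}, D_\pcyc]$ via Eq.~(\mref{eq:lie3}) produces two sums: $\sum_k a_k D_{p_k,\, D_{r,s}(p_k)}$ collapses to $a_{i_0} D_{r,s}$ since $D_{r,s}$ is nontrivial only on $r$, while $\sum_k a_k D_{r,\, D_{p_k,p_k}(s)}$ reduces to $(a_{i_1}+\cdots+a_{i_t})D_{r,s}$ by iterated Leibniz applied to the arrow decomposition of $s$. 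The resulting eigenvalue $a_{i_0} - a_{i_1} - \cdots - a_{i_t}$ matches the formula for $\varphi$ in the theorem.

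The main obstacle is the third step: showing $[\frakD_{AL}, \frakD_{AL}] \subseteq \frakD_{AL}$ in $HH^1(\bfk\Gamma)$, so that $\frakD_{AL}$ is a genuine Lie ideal rather than merely stable under the $\frakD_F$-action. By Eq.~(\mref{eq:lie3}), $[D_{r,s}, D_{p,q}] = D_{p,\, D_{r,s}(q)} - D_{r,\, D_{p,q}(s)}$ lies a priori in $\frakD_E \oplus \frakD_{AL}$, so the problem reduces to controlling the $\frakD_E$-component. Via the planar decomposition $\frakD_E = \frakD_V \oplus \frakD_F$ of Theorem~\mref{thm:diffeuler}, the $\frakD_V$-part is inner and vanishes in $HH^1(\bfk\Gamma)$; the $\frakD_F$-part would need to be identified, via Proposition~\mref{pp:basis2}, with an inner cycle derivation in order to vanish as well. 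I would attack this by a careful case-analysis on whether $p$ appears in $D_{r,s}(q)$ and whether $r$ appears in $D_{p,q}(s)$, matching the resulting $D_{p,p}$- and $D_{r,r}$-terms against the inner-derivation relations from Proposition~\mref{pp:basis2}. Once closure is established, the three steps together yield the semidirect sum $HH^1(\bfk\Gamma) = \frakD_{AL} \rtimes_\varphi \frakD_F$.
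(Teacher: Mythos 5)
Your first two steps coincide exactly with the paper's argument: the paper establishes that $\frakD_F$ is abelian by the same bilinear expansion of $[D_{\pcyc},D_{\frakq}]$ into edge derivations, and it derives the eigenvalue $-a_{i_0}+a_{i_1}+\cdots+a_{i_t}$ from Eq.~(\mref{eq:lie3}) just as you describe. Where you diverge is telling: the paper stops after these two computations and simply asserts that $\frakD_{AL}$ is a Lie ideal, whereas you correctly point out that one still needs $[\frakD_{AL},\frakD_{AL}]\subseteq\frakD_{AL}$ modulo inner derivations and you isolate this as the main obstacle. That instinct is right, but your proposed resolution cannot succeed.

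The gap is that the closure statement is false, so no case analysis will make the $\frakD_F$-component vanish. Take $\Gamma$ with $V=\{v_1,v_2\}$ and two parallel arrows $p_1,p_2$ from $v_1$ to $v_2$ (the quiver of the paper's closing remark, which is connected, planar, finite and acyclic). Then $\frakB_{AL}=\{D_{p_1,p_2},D_{p_2,p_1}\}$, and Eq.~(\mref{eq:lie3}) gives
\[
[D_{p_1,p_2},D_{p_2,p_1}]=D_{p_2,\,D_{p_1,p_2}(p_1)}-D_{p_1,\,D_{p_2,p_1}(p_2)}=D_{p_2,p_2}-D_{p_1,p_1},
\]
which is $\pm D_{\pcyc}$ for a bounded primitive cycle and hence a nonzero basis element of $HH^1(\bfk\Gamma)$: the only inner relation available in $\frakD_E$ here is $D_{v_1}=D_{p_1,p_1}+D_{p_2,p_2}$, which kills the sum, not the difference. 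So the $\frakD_F$-part of $[D_{r,s},D_{p,q}]$ survives in $HH^1(\bfk\Gamma)$, $\frakD_{AL}$ is not closed under the bracket, and indeed in this example $HH^1(\bfk\Gamma)\cong\mathfrak{sl}_2$, which admits no two-dimensional ideal whatsoever. The failure occurs precisely in the case you would have to confront in your case analysis, namely $q=r$ with $D_{r,s}(q)=s=p$, i.e.\ when $\Gamma$ has parallel arrows; any honest completion of the argument must either exclude such quivers or enlarge $\frakD_{AL}$ to absorb the resulting face derivations.
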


From this proposition, we conclude that when two quivers are defined on the same
unoriented graph, their outer differential Lie algebras are not
isomorphic in general unless they have the same set of almost oriented cycles.

\begin{remark}
{\rm We conclude our paper with a brief discussion on the
characteristic of the ground field $\bfk$ in
Theorem~\mref{thm:obase} and Corollary~\mref{coro:obase}. The
requirement that the characteristic of $\bfk$ is zero is because in
the proof of this theorem, we need Theorem~\mref{thm:lt} which
requires that the ground field is a number field. We believe that
Theorem~\mref{thm:obase} and Corollary~\mref{coro:obase} remain
true without this condition, but in that case the proof cannot be
obtained by applying Theorem~\mref{thm:lt}.

As an example, consider the quiver $\Gamma$ with vertex set
$V=\{v_1,v_2\}$ and arrow set $E=\{p_1,p_2\}$ where both $p_1$ and
$p_2$ go from $v_1$ to $v_2$. Then by Theorem~\mref{thm:dbase}, we
find that $ \diff(\bfk\Gamma)$ has a basis given by
$\{D_{p_1},D_{p_2},D_{p_1,p_1},
D_{p_2,p_2},D_{p_1,p_2},D_{p_2,p_1}\}.$ Also
$\indiff(\bfk\Gamma)=\bfk\{D_{v_1},D_{p_1},D_{p_2}\}.$ Since
$D_{v_1}=D_{p_1,p_1}+D_{p_2,p_2}$, we find that a basis of
$HH^1(\bfk\Gamma)$ is given by (the cosets of)
$\{D_{p_1,p_1}-D_{p_2,p_2},D_{p_1,p_2},D_{p_2,p_1}\}.$ Thus,
Corollary~\mref{coro:obase} holds for $\bfk\Gamma$ over any field
$\bfk$.

However, by a direct computation, we have $ C_\Gamma=\left (
\begin{array}{cc} 1&1\\-1&-1 \\1&1 \\-1&-1\end{array}\right )$ and
hence $ B_\Gamma=\left (\begin{array}{cc} 2 &-2 \\ -2 &
2\end{array}\right ).$

 Thus if the characteristic of $\bfk$ is 2,
then the rank of $B_\Gamma$ is zero. So the conclusion of
Theorem~\mref{thm:rank}.(\mref{it:c}) does not hold. Therefore we cannot apply
Theorem~\mref{thm:rank}.(\mref{it:c}) to prove Theorem~\mref{thm:obase} and
Corollary~\mref{coro:obase} for our $\Gamma$ even though we have
verified that the conclusions of Theorem~\mref{thm:obase} and
Corollary~\mref{coro:obase} hold for this quiver.
}
\end{remark}

{\bf Acknowledgements.} Li Guo acknowledges support by NSF grant DMS 1001855
and thanks the Center of Mathematical Sciences in Zhejiang
University for hospitality. Fang Li takes this opportunity to
express thanks to the support from the National Natural Science
Foundation of China (No.10871170) and the Zhejiang Provincial
Natural Science Foundation (No.D7080064). This
research was supported in part by the Project of Knowledge
Innovation Program (PKIP)
  of Chinese Academy of Sciences, Grant No. KJCX2.YW.W10.

\end{document}